\newcommand{\N}{{\mathbb{N}}}
\newcommand{\Z}{{\mathbb{Z}}}
\newcommand{\C}{{\mathbb{C}}}
\newcommand{\calV}{{\mathcal V}}
\newcommand{\calO}{{\mathcal O}}
\newcommand{\gotA}{{\mathfrak A}}
\newcommand{\gotAA}{{\mathfrak A _0}}
\newcommand{\gotAAp}{{\mathfrak A '_0}}
\newcommand{\gotAAA}{{\mathfrak A _{00}}}
\newcommand{\gotB}{{\mathfrak B}}
\newcommand{\ol}{\overline}
\newcommand{\uloopr}[1]{\ar@'{@+{[0,0]+(-4,5)}@+{[0,0]+(0,10)}@+{[0,0] +(4,5)}}^{#1}}
\newcommand{\uloopd}[1]{\ar@'{@+{[0,0]+(5,4)}@+{[0,0]+(10,0)}@+{[0,0]+ (5,-4)}}^{#1}}
\newcommand{\dloopr}[1]{\ar@'{@+{[0,0]+(-4,-5)}@+{[0,0]+(0,-10)}@+{[0, 0]+(4,-5)}}_{#1}}
\newcommand{\dloopd}[1]{\ar@'{@+{[0,0]+(-5,4)}@+{[0,0]+(-10,0)}@+{[0,0 ]+(-5,-4)}}_{#1}}
\newcommand{\luloop}[1]{\ar@'{@+{[0,0]+(-8,2)}@+{[0,0]+(-10,10)}@+{[0, 0]+(2,2)}}^{#1}}
\newcommand{\dotedge}{\ar@{.}}
\newcommand{\eqedge}{\ar@{=}}
\newcommand{\mon}[1]{\calV(#1)} 
\newcommand{\Si}{{\rm Sink}}
\DeclareMathOperator{\rd}{red}
\newcommand{\Cstred}{C^*_{\rd}}
\newcommand{\bgast}{\mbox{\Large$*$}}
\theoremstyle{plain}
\newtheorem{theorem}{Theorem}[section]
\newtheorem{lemma}[theorem]{Lemma}
\newtheorem{proposition}[theorem]{Proposition}
\newtheorem{corollary}[theorem]{Corollary}
\newtheorem{conjecture}[theorem]{Conjecture}
\theoremstyle{definition}
\newtheorem{definition}[theorem]{Definition}
\newtheorem{example}[theorem]{Example}
\newtheorem{remark}[theorem]{Remark}
\newtheorem*{remark*}{Remark}
\newtheorem{remarks}[theorem]{Remarks}
\newtheorem*{assumption*}{Assumption}
\numberwithin{equation}{section}
\begin{document}

\title[Purely infinite simple class]{Purely infinite simple reduced C*-algebras of one-relator separated graphs}%

\author{Pere Ara}

\address{Departament de Matem\`atiques, Universitat Aut\`onoma de Barcelona,
08193 Bellaterra (Barcelona), Spain.} \email{para@mat.uab.cat}


\thanks{Partially supported by DGI MICIIN-FEDER
MTM2011-28992-C02-01, and by the Comissionat per Universitats i
Recerca de la Generalitat de Catalunya.}

\subjclass[2000]{Primary 46L05, 46L09; Secondary 46L80}

\keywords{Graph C*-algebra, separated graph, amalgamated free
product, purely infinite, simple, conditional expectation}

\date{\today}

\begin{abstract}
Given a separated graph $(E,C)$, there are two different C*-algebras
associated to it, the full graph C*-algebra $C^*(E,C)$, and the
reduced one $\Cstred(E,C)$. For a large class of separated graphs
$(E,C)$, we prove that $\Cstred (E,C)$  either is purely infinite
simple or admits a faithful tracial state. The main tool we use to
show pure infiniteness of reduced graph C*-algebras is a
generalization to the amalgamated case of a result on purely
infinite simple free products due to Dykema.
\end{abstract}
\maketitle


\section{Introduction}

Separated graphs have been introduced in \cite{AG} and \cite{AG2}.
Their associated graph algebras provide generalizations of the usual
graph C*-algebras (\cite{Raeburn}) and Leavitt path algebras
(\cite{AA}, \cite{AMP}) associated to directed graphs, although
these algebras behave quite differently from the usual graph
algebras because the range projections associated to different edges
need not commute. One motivation for their introduction was to
provide graph-algebraic models for C*-algebraic analogues of the
Leavitt algebras of \cite{lea}. These had been studied by various
authors over the years, notably McClanahan (see \cite{McCla1},
\cite{McCla2}, \cite{McCla3}). As discussed below, another
motivation was to obtain graph algebras whose structure of
projections is as general as possible.

\medskip

Given a finitely separated graph $(E,C)$, two different graph
C*-algebras are considered in \cite{AG2}, the {\it full} graph
C*-algebra $C^*(E,C)$ and the {\it reduced} graph C*-algebra
$\Cstred (E,C)$. These two C*-algebras agree in the classical,
non-separated case, by \cite[Theorem 3.8(2)]{AG2}, but they differ
generally, see \cite[Section 4]{AG2}. In this paper we obtain
significant progress on some of the open problems raised in
\cite{AG2}. In particular we completely solve \cite[Problem
7.3]{AG2} for the class of one-relator separated graphs, giving a
characterization of the purely infinite simple reduced graph
C*-algebras corresponding to this class of separated graphs. As a
special case, we deduce that the reduced C*-algebras $\Cstred
(E(m,n), C(m,n))$, which are certain C*-completions of the Leavitt
algebras $L_{\C}(m,n)$ of type $(m,n)$, are purely infinite simple
for $m\ne n$ (Corollary \ref{cor:Umn}).

\medskip

Reduced graph C*-algebras of separated graphs are defined as reduced
amalgamated free products of certain usual graph C*-algebras, with
respect to canonical conditional expectations, see Section
\ref{sect:preliminaries}. The reader is referred to \cite{voicu} and
\cite[4.7,4.8]{BO} for a quick introduction to the subject of
reduced amalgamated free products of C*-algebras. Sufficient
conditions for a reduced free product of two C*-algebras to be
purely infinite simple were obtained in \cite{DykemaRordamCJM},
\cite{ChoDyk} and \cite{DykemaScan}. The main tool we use in the
present paper to show our results on reduced graph C*-algebras of
separated graphs is a generalization to certain amalgamated free
products of a result of Dykema \cite{DykemaScan} (see Theorem
\ref{thm:Dykema's}).

\medskip

Let us recall the definition of a separated graph:

\begin{definition} \label{defsepgraph} \cite[Definition 1.3]{AG2}
A \emph{separated graph} is a pair $(E,C)$ where $E$ is a graph,
$C=\bigsqcup _{v\in E^ 0} C_v$, and $C_v$ is a partition of
$s^{-1}(v)$ (into pairwise disjoint nonempty subsets) for every
vertex $v$. (In case $v$ is a sink, we take $C_v$ to be the empty
family of subsets of $s^{-1}(v)$.)

If all the sets in $C$ are finite, we say that $(E,C)$ is a
\emph{finitely separated} graph. This necessarily holds if $E$ is
row-finite.

The set $C$ is a \emph{trivial separation} of $E$ in case $C_v=
\{s^{-1}(v)\}$ for each $v\in E^0\setminus \Si(E)$. In that case,
$(E,C)$ is called a \emph{trivially separated graph} or a
\emph{non-separated graph}.
\end{definition}

By its definition (see Section \ref{sect:preliminaries}), the
projections of $C^*(E,C)$ and $\Cstred (E,C)$ satisfy some obvious
relations, prescribed by the structure of the separated graph
$(E,C)$. These relations can be chosen arbitrarily, and this was one
of the main motivations for the work in \cite{AG} and \cite{AG2}.
This can be formalized as follows. Let $(E,C)$ be a finitely
separated graph, and let $M(E,C)$ be the abelian monoid given by
generators $a_v$, $v\in E^0$, and relations $a_v=\sum _{e\in X}
a_{r(e)}$, for $X\in C_v$, $v\in E^0$. Then there is a canonical
monoid homomorphism $M(E,C)\to \mon{C^*(E,C)}$, which is conjectured
to be an isomorphism for all finitely separated graphs $(E,C)$ (see
Section \ref{sect:k-theory} for a discussion on this problem). Here
$\mon{\mathcal A}$ denotes the monoid of Murray-von Neumann
equivalence classes of projections in $M_{\infty}(\mathcal A)$, for
any C*-algebra $\mathcal A$.

\medskip

Given a presentation $\langle \mathcal X \mid \mathcal R\rangle $ of
an abelian conical monoid $M$, satisfying some natural conditions,
it was shown in \cite[Proposition 4.4]{AG} how to associate to it a
separated graph $(E,C)$ such that $M(E,C)\cong M$. We will now
recall this construction for one-relator monoids. Let
$$\langle a_1,\dots ,a_n \mid \sum _{i=1}^n r_ia_i = \sum_{i=1}^n
s_ia_i \rangle $$ be a presentation of the one-relator abelian
conical monoid $M$, where $a_1,\dots ,a_n$ are free generators, and
$r_i, s_i$ are non-negative integers such that $r_i+s_i>0$ for all
$i$. Let $(E,C)$ be the finitely separated graph constructed as
follows:
\begin{enumerate}
\item $E^0 := \{v,w_1,w_2,\dots ,w_n\}$.
\item  $v$ is a source, and all the $w_i$ are sinks.
\item For each $i\in \{1,\dots ,n\}$, there are
exactly $r_i+s_i$ edges with source $v$ and range $w_i$.
\item $C=C_v=\{X,Y\}$, where $X$ contains exactly $s_i$ edges $v \rightarrow w_i$ for each $i$, and $Y$
contains exactly $r_i$ edges $v \rightarrow v_i$ for each $i$. Thus,
$E^1= X\sqcup Y$.
\end{enumerate}

\medskip

We call a separated graph constructed in this way a {\it one-relator
separated graph}. As a particular example, we may consider the
presentation $\langle a \mid ma=na \rangle $, with $1\le m\le n$.
This gives rise to the separated graph $(E(m,n), C(m,n))$ considered
in \cite[Example 4.5]{AG2}, with two vertices $v$ and $w$, and $n+m$
arrows from $v$ to $w$, and with $C(m,n)=\{X,Y\}$ where $|X|=n$ and
$|Y|=m$. The C*-algebras $C^*(E(m,n), C(m,n))$ and $\Cstred (E(m,n),
C(m,n))$ are closely related to the C*-algebras studied in
\cite{Brown, McCla1, McCla2, McCla3}, see \cite[Sections 4 and
6]{AG2}.

\medskip

In this paper, we show the following dichotomy for the reduced graph
C*-algebras of one-relator separated graphs:

\begin{theorem}
\label{thm:main} Let $(E,C)$ be the one-relator separated graph
associated to the presentation $$\langle a_1,\dots ,a_n \mid \sum
_{i=1}^n r_ia_i=\sum _{i=1}^n s_ia_i \rangle .$$ Set $M=\sum
_{i=1}^n r_i$ and $N=\sum _{i=1}^n s_i$, and assume that $2\le M\le
N$. Then $\Cstred (E,C)$ either is purely infinite simple or has a
faithful tracial state, and it is purely infinite simple if and only
if $M<N$ and there is $i_0\in \{1,\dots ,n\}$ such that $s_{i_0}>0$
and $r_{i_0}>0$. Moreover, if $N+M\ge 5$ and $\Cstred (E,C)$ is
finite, then it is simple with a unique tracial state.
\end{theorem}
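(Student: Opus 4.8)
The plan is to realize $\Cstred(E,C)$ as a reduced amalgamated free product of two finite-dimensional graph C*-algebras and then feed this into Theorem \ref{thm:Dykema's}. By the definition recalled in Section \ref{sect:preliminaries}, $\Cstred(E,C)$ is the reduced amalgamated free product $(\mathcal A_X,E_X)*_D(\mathcal A_Y,E_Y)$, where $D$ is the commutative C*-algebra generated by the vertex projections $p_v,p_{w_1},\dots,p_{w_n}$, so $D\cong\C^{\,n+1}$, and $\mathcal A_X,\mathcal A_Y$ are the (non-separated) graph C*-algebras of the subgraphs carrying the edge sets $X$ and $Y$. First I would compute these two pieces. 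Each is the graph C*-algebra of a finite acyclic graph ($v$ a source, the $w_i$ sinks), hence finite-dimensional; a direct matrix-unit count, using that edges into distinct sinks have orthogonal ranges and that $p_v=\sum_{e\in X}s_es_e^*$ splits across the sinks, gives $\mathcal A_X\cong\bigoplus_{i=1}^n M_{s_i+1}(\C)$ and $\mathcal A_Y\cong\bigoplus_{i=1}^n M_{r_i+1}(\C)$ (with the convention $M_1=\C$ when $s_i=0$ or $r_i=0$). The relevant maps $E_X,E_Y\colon\mathcal A_X,\mathcal A_Y\to D$ are the canonical faithful conditional expectations of the construction, whose explicit form on each matrix block I would compute below.

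Next I would invoke Theorem \ref{thm:Dykema's}. The hypotheses to verify are that $E_X,E_Y$ are faithful (clear) together with the non-degeneracy and dimension conditions of the amalgamated Dykema theorem; this is exactly where $M\ge 2$, $M\le N$, and, for the last clause, $N+M\ge 5$ enter, ruling out the degenerate $M=N=2$ situation that would produce a $C^*(\Z_2*\Z_2)$-type, non-simple factor. Granting these hypotheses, Theorem \ref{thm:Dykema's} yields at once the dichotomy of the first assertion: $\Cstred(E,C)$ is either purely infinite simple or admits a faithful tracial state, and it is purely infinite simple precisely when it carries no tracial state — equivalently, when there is no state $d$ on $D$ for which both $d\circ E_X$ and $d\circ E_Y$ are traces on $\mathcal A_X$ and $\mathcal A_Y$.

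The crux, and the step I expect to be the main obstacle, is to translate this trace-existence condition into the stated combinatorics. Writing $d=(d_v,d_{w_1},\dots,d_{w_n})$ for the weight vector of a state on $D$, computing $E_X$ on the block $M_{s_i+1}(\C)$ gives, for $x=(x_{ab})_{a,b=0}^{s_i}$,
$$E_X(x)=x_{00}\,p_{w_i}+\frac{1}{N}\Big(\sum_{a=1}^{s_i}x_{aa}\Big)\,p_v ,$$
so $d\circ E_X$ is a trace on that block if and only if $d_{w_i}=d_v/N$, and similarly $d\circ E_Y$ is a trace if and only if $d_{w_i}=d_v/M$. Since $r_i+s_i>0$ for every $i$, a normalizable solution forces $d_v>0$, and then one needs $d_{w_i}=d_v/N$ whenever $s_i>0$ and $d_{w_i}=d_v/M$ whenever $r_i>0$. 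If some $i_0$ has $s_{i_0}>0$ and $r_{i_0}>0$ while $M<N$, these two requirements clash ($d_v/N\ne d_v/M$), so no trace exists and $\Cstred(E,C)$ is purely infinite simple; in every other case (either $M=N$, or no index is fed by both $X$ and $Y$) the constraints are compatible and admit a strictly positive solution, producing a faithful tracial state $\tau=d\circ E$ via faithfulness of $E$. This establishes the ``if and only if'' exactly as stated.

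Finally, for the last clause I would argue as follows. If $\Cstred(E,C)$ is finite it lies in the tracial alternative above, and the simplicity assertion is the simplicity conclusion of Theorem \ref{thm:Dykema's}, whose non-degeneracy hypothesis is available precisely because $N+M\ge 5$ excludes $M=N=2$. For uniqueness of the trace, I would observe that any tracial state $\tau$ restricts to a weight vector $d=\tau|_D$ satisfying the constraints above, that in each finite case these constraints determine $d$ uniquely up to normalization, and that a trace on $\bigoplus_i M_{s_i+1}(\C)$ is determined by its values on the $p_{w_i}$; hence $\tau|_{\mathcal A_X}=d\circ E_X$ and $\tau|_{\mathcal A_Y}=d\circ E_Y$, and by the trace theory for reduced amalgamated free products $\tau=d\circ E$ is forced. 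Thus the trace is unique, completing the proof.
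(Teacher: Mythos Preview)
Your proposal rests on a misreading of Theorem~\ref{thm:Dykema's}. That theorem is \emph{not} a dichotomy: it does not say ``either purely infinite simple or has a faithful tracial state, the latter iff a compatible state on $D$ exists.'' Its hypotheses are not ``faithfulness of $E_X,E_Y$ together with non-degeneracy and dimension conditions.'' Rather, one must exhibit a central projection $P\in C$ with $PC=\C P$, a partial isometry $v\in A$ with $v^*v=p\le P$, $vv^*=q\le 1-p$, a scaling identity $\phi(vx)=\lambda\phi(xv)$, and an element $y\in\gotAA$ with $yy^*=q$, $y^*y\le p$, $\phi(y^*x)=\mu\phi(xy^*)$, $\lambda\mu<1$; one must then prove that every $p_k=w^k(w^*)^k$ is $A$-free and that $q\gotAA q$ contains a unital diffuse abelian subalgebra in the centralizer of $\phi$. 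Verifying all this in the present situation is exactly the content of Theorem~\ref{thm:motivating}, and it is substantial: one passes to the subalgebra $\gotB\cong C^*_r(\Z_N*\Z_M)$, uses the $K_0$-computation of Dykema--R\o rdam together with stable rank one of $C^*_r(\Z_N*\Z_M)$ (which needs $M\ge 2$, $N\ge 3$) to produce $y$, and invokes the two-projection free-product computation to find the diffuse abelian subalgebra. None of this is accounted for in your outline, and the mere \emph{absence} of a compatible trace does not, by itself, supply any of it.

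Your analysis of when a compatible state $d$ on $D$ exists is correct and is essentially what the paper does in Propositions~\ref{prop:balancedcase} and~\ref{prop:capequalzero}; that handles the tracial alternative. But simplicity in the finite case does \emph{not} come from Theorem~\ref{thm:Dykema's} (which concludes simplicity only together with pure infiniteness); the paper obtains it separately via an Avitzour-type argument (Lemma~\ref{lem:another-Aznavour}), which is where $N+M\ge 5$ is used. Your uniqueness-of-trace argument is also incomplete: a trace $\tau$ on $\Cstred(E,C)$ restricts to a trace on $\mathcal A_X\cong\bigoplus_i M_{s_i+1}(\C)$ determined by the $d_{w_i}$, but the resulting constraint is only $d_v=\sum_i s_i d_{w_i}$, not $d_{w_i}=d_v/N$; in general this system is underdetermined, and the claim ``$\tau=d\circ E$ is forced by trace theory for reduced amalgamated free products'' is exactly what has to be proved. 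The paper instead uses the Dixmier-type property $\Phi(x)\in\overline{\mathrm{co}}\{u^*xu:u\text{ unitary in }v\gotA v\}$ from \cite[Proposition~4.3]{AG2} to force any two traces to agree on $v\gotA v$, and then fullness of $v$.
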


When $N+M\le 4$, the C*-algebra $\Cstred (E,C)$ is also simple,
except for a few cases. We analyze the different possibilities in
the final part of Section \ref{sect:finite-case}. The case where
$M=1$ corresponds to an ordinary graph C*-algebra.  Observe that in
particular we get that the algebras $\Cstred (E(m,n), C(m,n))$ are
purely infinite simple when $1<m<n$. It was suggested in
\cite[Example 4.3]{McCla3} that the simple C*-algebras
$U^{\text{nc}}_{(m,n), \text{red}}$, for $1<m<n$, might be examples
of finite but not stably finite C*-algebras. In view of
\cite[Proposition 6.1]{AG2}, our main result shows in particular
that the C*-algebras $U^{\text{nc}}_{(m,n), \text{red}}$ are purely
infinite if $m<n$.

\medskip

It is worth to mention the appearance of some group C*-algebras as
graph C*-algebras of separated graphs. As noted in \cite{AG2}, one
example of this situation occurs when we consider the separated
graph $(E,C)$ with just one vertex and with the sets of the
partition reduced to singletons. In this case, the full graph
C*-algebra $C^*(E,C)$ is just the full group C*-algebra $C^*(\mathbb
F )$ of a free group $\mathbb F$ of rank $|E^1|$, while the reduced
graph C*-algebra is precisely the reduced group C*-algebra $C^*_r
(\mathbb F)$. In the present investigation, we discover another
situation in which the full and the reduced graph C*-algebras
correspond (through a Morita-equivalence) to the full and reduced
group C*-algebras of a group, respectively (see Lemma
\ref{lem:full-algebras}(2) and Proposition
\ref{prop:reducedgpCstar}). The universal unital C*-algebra
generated by a partial isometry also appears as a full corner of the
full C*-algebra of a one-relator separated graph (see Lemma
\ref{lem:full-algebras}(1)).

\medskip

We now outline the contents of the paper. After a section of
preliminaries, we obtain in Section \ref{sect:dykema} the
generalization to certain amalgamated free products of Dykema's
result (\cite[Theorem 3.1]{DykemaScan}). Section
\ref{sect:pur-infinite} contains the proof of pure infiniteness and
simplicity for a class of reduced graph C*-algebras of one-relator
separated graphs (Theorem \ref{thm:motivating}). It also contains a
direct application of Dykema's Theorem to show that certain reduced
free products of Cuntz algebras are purely infinite simple
(Proposition \ref{prop:one-vertex}). We study the finite cases in
Section \ref{sect:finite-case}, obtaining in particular the proof of
our main result (Theorem \ref{thm:main}). Finally, in Section
\ref{sect:k-theory}, we briefly discuss an open problem raised in
\cite{AG2} on the non-stable K-theory of $C^*(E,C)$, and we point
out some K-theoretic consequences of our results. In particular, we
give an example of a one-relator separated graph $(E,C)$ such that
the full graph C*-algebra $C^*(E,C)$ is stably finite (indeed
residually finite dimensional), while the reduced graph C*-algebra
$\Cstred (E,C)$ is purely infinite simple (Example
\ref{examplM(E,C)st}).

\section{Preliminaries}
\label{sect:preliminaries}

Throughout, all graphs will be directed graphs of the form $E=
(E^0,E^1,s,r)$, where $E^0$ and $E^1$ denote the sets of vertices
and edges of $E$, respectively, and $s,r: E^1 \rightarrow E^0$ are
the source and range maps. We follow the convention of composing
paths from left to right -- thus, a path in $E$ is given in the form
$\alpha= e_1e_2 \cdots e_n$ where the $e_i\in E^1$ and $r(e_i)=
s(e_{i+1})$ for $i<n$. The \emph{length} of such a path is $|\alpha|
:= n$. Paths of length $0$ are identified with the vertices of $E$.

\begin{definition} \label{def:LPASG} \cite[Definition 1.5]{AG2}
For any separated graph $(E,C)$, the full graph C*-algebra of the
separated graph $(E,C)$ is the universal C*-algebra with generators
$\{ v, e\mid v\in E^0,\ e\in E^1 \}$, subject to the following
relations:
\begin{enumerate}
\item[] (V)\ \ $vw = \delta_{v,w}v$ \ and $v=v^*$ \ for all $v,w \in E^0$ ,
\item[] (E)\ \ $s(e)e=er(e)=e$ \ for all $e\in E^1$ ,
\item[] (SCK1)\ \ $e^*f=\delta _{e,f}r(e)$ \ for all $e,f\in X$, $X\in C$, and
\item[] (SCK2)\ \ $v=\sum _{ e\in X }ee^*$ \ for every finite set $X\in C_v$, $v\in E^0$.
\end{enumerate}
\end{definition}

In case $(E,C)$ is trivially separated, $C^*(E,C)$ is just the
classical graph C*-algebra $C^*(E)$.

We now recall the important abstract characterization of the reduced
amalgamated free product of C*-algebras, due to Voiculescu
\cite{voicu}.

\begin{definition}  \label{def:redamprod}
The \emph{reduced amalgamated product} $(A, \Phi )$ of a nonempty
family $(A_{\iota}, \Phi _{\iota})_{\iota \in I}$ of unital
C*-algebras containing a unital subalgebra $A_0$ with conditional
expectations $\Phi _{\iota}\colon A_{\iota}\to A_0$ is uniquely
determined by the following conditions:
\begin{enumerate}
\item $A$ is a unital C*-algebra, and there are unital $*$-homomorphisms
$\sigma _{\iota}\colon A_{\iota} \to A$ such that $\sigma_{\iota}|
_{A_0}= \sigma_{\iota'}|_{A_0}$ for all $\iota,\iota '\in I$.
Moreover the map $\sigma_{\iota}| _{A_0}$ is injective and we
identify $A_0$ with its image in $A$ through this map.
\item $A$ is generated by $\bigcup_{\iota \in I}
\sigma_{\iota}(A_{\iota})$.
\item $\Phi\colon A\to A_0$ is a
conditional expectation such that $\Phi \circ \sigma_{\iota}= \Phi
_{\iota}$ for all $\iota \in I$.
\item For $(\iota _1,\dots ,\iota _n)\in \Lambda (I)$ and $a_j\in \ker{\Phi_{\iota _j}}$
we have $\Phi (\sigma_{\iota _1}(a_1)\cdots \sigma _{\iota
_n}(a_n))=0$. Here, $\Lambda (I)$ denotes the set of all finite
tuples $(\iota _1,\dots ,\iota _n) \in \bigsqcup_{n=1}^\infty I^n$
such that $\iota _i\ne \iota _{i+1}$ for $i=1,\dots ,n-1$.
\item If $c\in A$ is such that $\Phi (a^*c^*ca)=0$ for all $a\in A$,
then $c=0$.
\end{enumerate}
\end{definition}

As usual, we will write $A^{{\rm o}}=\ker (\Phi)$, where $\Phi $ is
a conditional expectation from $A$ onto a unital subalgebra $C$.
Given subsets $T_1$, $T_2$ of an algebra $\mathcal H$, we will
denote by $\Lambda ^{{\rm o}}(T_1,T_2)$ the set of all elements of
$\mathcal H$ of the form $a_1a_2 \cdots a_r$, where $a_j\in T_{i_j}$
and $i_1\ne i_2$, $i_2\ne i_3$,\dots , $i_{r-1}\ne i_r$.

\medskip

We can now recall the definition of the reduced graph C*-algebra
$\Cstred (E,C)$ of a finitely separated graph. We only need here the
case in which $E^0$ is a finite set. In this case, all the
C*-algebras involved in the definition are unital, and this
simplifies somewhat the definition. The reader is referred to
\cite{AG2} for the general case.

\medskip

Let $(E,C)$ be a finitely separated graph with $|E^0|<\infty $.  Set
$A_0=C(E^0)$ and, for $X\in C$,  $A_X=C^*(E_X)$, where $E_X$ is the
subgraph of $E$ with $(E_X)^0=E^0$ and $(E_X)^1= X$.

\begin{definition}
\label{def:redCstar} Let $(E,C)$ be a finitely separated graph, with
$|E^0|<\infty$,  and let $A_0, A_X$ be as defined above, for $X\in
C$. Let $\Phi_X\colon C^*(E_X)\to C(E^0)$ be the canonical
conditional expectations defined in \cite[Theorem 2.1]{AG2}. Then
the {\it reduced graph C*-algebra} $(\Cstred (E,C),\Phi )$
associated to $(E,C)$ is the reduced amalgamated product of the
family $(C^*(E_X),\Phi_X)_{X\in C}$. Since all the conditional
expectations $\Phi _X$ are faithful, it follows from \cite[Theorem
2.1]{ivanov} that the canonical conditional expectation $\Phi \colon
\Cstred (E,C)\to C(E^0)$ is also faithful.
\end{definition}

A concrete description in terms of a reduced amalgamated product of
finite-dimensional C*-algebras will be given in Section
\ref{sect:pur-infinite} for the reduced graph C*-algebras associated
to one-relator separated graphs.

We will make use of the following result and notation:

\begin{theorem}
\label{thm:freeproducps} \cite[Theorem 4.8.5]{BO} Let $1\in
D\subseteq A_i$ and nondegenerate conditional expectations $E_i^{A}$
from $A_i$ onto $D$ be given, for $i=1,2$. Assume $1\in D\subseteq
B_i$ and assume there exist nondegenerate conditional expectations
$E_i^B$ from $B_i$ onto $D$. Let $\theta _i\colon A_i\to B_i$ be
u.c.p maps such that $(\theta)_i|_D=\text{id}_D$ and $E_i^B\circ
\theta _i= E_i^A$. Then, there is a u.c.p. map
$$\theta_1 \ast \theta _2\colon (A_1,E_1^A)*_D(A_2,
E_2^A)\longrightarrow (B_1, E_1^B)*_D (B_2,E_2^B)$$ such that
$(\theta_1 \ast \theta _2)|_D =\text{id}_D$ and
$$(\theta_1\ast \theta _2)(a_1a_2\cdots a_n)= \theta
_{i_1}(a_1)\theta_{i_2}(a_2)\cdots \theta _{i_n}(a_n)$$ for $a_j\in
A^{{\rm o}}_{i_j}$ with $i_1\ne i_2\ne \cdots \ne i_n$.
\end{theorem}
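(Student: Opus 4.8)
The plan is to establish the statement as the functoriality of the reduced amalgamated free product with respect to completely positive maps that are compatible with the given conditional expectations; in the non-amalgamated case this is a theorem of Boca, and the amalgamated version runs along the same lines with Hilbert $D$-modules in place of Hilbert spaces. First I would form, for $i=1,2$, the GNS Hilbert $D$-modules $\mathcal{H}_i=L^2(A_i,E_i^A)$ and $\mathcal{K}_i=L^2(B_i,E_i^B)$ (completions of $A_i$, $B_i$ in the $D$-valued inner products $\langle a,b\rangle=E_i^A(a^*b)$, etc.), each with a distinguished unit vector $\hat{1}$, an orthogonal $D$-module decomposition $\mathcal{H}_i=D\hat{1}\oplus\mathcal{H}_i^{\mathrm{o}}$ where $\mathcal{H}_i^{\mathrm{o}}=\overline{\{\hat{a}:a\in A_i^{\mathrm{o}}\}}$, and the left regular action $\lambda_i^A$ of $A_i$. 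The reduced amalgamated free products $A:=(A_1,E_1^A)*_D(A_2,E_2^A)$ and $B:=(B_1,E_1^B)*_D(B_2,E_2^B)$ are then faithfully represented on the free product Hilbert $D$-modules $\mathcal{H}=*_D(\mathcal{H}_i,\hat{1})$ and $\mathcal{K}=*_D(\mathcal{K}_i,\hat{1})$, which carry the orthogonal decompositions $\mathcal{H}=D\hat{1}\oplus\bigoplus_{n\ge 1}\bigoplus_{i_1\ne\cdots\ne i_n}\mathcal{H}_{i_1}^{\mathrm{o}}\otimes_D\cdots\otimes_D\mathcal{H}_{i_n}^{\mathrm{o}}$ and likewise for $\mathcal{K}$.

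Next I would dilate. Using Kasparov's relative Stinespring (KSGNS) theorem, each u.c.p.\ map $\theta_i\colon A_i\to B_i\subseteq\mathcal{B}(\mathcal{K}_i)$ dilates to a unital $*$-representation $\pi_i\colon A_i\to\mathcal{B}(\tilde{\mathcal{K}}_i)$ on a Hilbert $D$-module $\tilde{\mathcal{K}}_i$ containing $\mathcal{K}_i$ isometrically as a complemented submodule (with isometry $S_i\colon\mathcal{K}_i\to\tilde{\mathcal{K}}_i$), such that $\theta_i(a)=S_i^*\pi_i(a)S_i$ and $\tilde{\mathcal{K}}_i=\overline{\pi_i(A_i)S_i\mathcal{K}_i}$. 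Putting $\eta_i:=S_i\hat{1}\in\tilde{\mathcal{K}}_i$, the hypotheses $E_i^B\circ\theta_i=E_i^A$ and $\theta_i|_D=\id_D$ give $\langle\eta_i,\pi_i(a)\eta_i\rangle=E_i^B(\theta_i(a))=E_i^A(a)$ and that $\pi_i|_D$ is the $D$-action, so $(\tilde{\mathcal{K}}_i,\eta_i)$ is a pointed Hilbert $D$-module for $(A_i,E_i^A)$ and the GNS module $\mathcal{H}_i$ embeds into it isometrically via $\hat{a}\mapsto\pi_i(a)\eta_i$, intertwining $\lambda_i^A$ with $\pi_i$. I would then take free products of the dilations: $\tilde{\mathcal{K}}:=*_D(\tilde{\mathcal{K}}_i,\eta_i)$ with free product representation $\pi:=*_D\pi_i$; the $\eta$-cyclic submodule of $\tilde{\mathcal{K}}$ is unitarily $\mathcal{H}$ and carries $\lambda^A$, while the isometries $S_i$ assemble into an isometry $S:=*_D S_i\colon\mathcal{K}\to\tilde{\mathcal{K}}$ of Hilbert $D$-modules.

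With these in hand I would define $\theta_1*\theta_2$ as the compression $x\mapsto S^*\pi(x)S$ — more precisely, on a reduced word $x=a_1\cdots a_n$ with $a_j\in A_{i_j}^{\mathrm{o}}$, $i_1\ne\cdots\ne i_n$, one sets $\pi(x):=\pi_{i_1}(a_1)\cdots\pi_{i_n}(a_n)\in\mathcal{B}(\tilde{\mathcal{K}})$ and extends linearly, noting $\pi$ is a $*$-representation of the \emph{full} amalgamated free product and $S$ is an isometry, so this compression is completely positive, unital, and restricts to $\id_D$ on $D$. The heart of the matter is the computation on reduced words: using that $\pi(a_j)$ acts on the alternating tensor legs of $\tilde{\mathcal{K}}$ through $\pi_{i_j}(a_j)$, together with the identities $S_i^*\pi_i(a)\eta_i=\widehat{\theta_i(a)}$ and their creation/annihilation analogues governing the internal tensor products over $D$, one shows that $S^*\pi_{i_1}(a_1)\cdots\pi_{i_n}(a_n)S$ acts on $\mathcal{K}$ exactly as $\lambda^B\bigl(\theta_{i_1}(a_1)\cdots\theta_{i_n}(a_n)\bigr)$; note $\theta_{i_j}(a_j)\in B_{i_j}^{\mathrm{o}}$ because $E_{i_j}^B(\theta_{i_j}(a_j))=E_{i_j}^A(a_j)=0$. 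In particular the value lies in the C*-algebra generated by $\lambda^B(B_1)\cup\lambda^B(B_2)$, i.e.\ in $B$, and equals $\theta_{i_1}(a_1)\cdots\theta_{i_n}(a_n)$; since $D$ together with the reduced words spans a dense subspace of $A$, this yields the asserted map $\theta_1*\theta_2\colon A\to B$ with $(\theta_1*\theta_2)|_D=\id_D$ and the stated word formula.

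The main obstacle is precisely this computation and the accompanying well-definedness: one must check that $x\mapsto S^*\pi(x)S$, a priori defined only by a formula on the span of reduced words (equivalently, as a compression of the \emph{full} amalgamated free product representation $\pi$), descends to a bounded c.p.\ map on the \emph{reduced} C*-algebra $A$ and takes values in $B$ rather than merely in $\mathcal{B}(\mathcal{K})$. This is handled as in Boca's argument for the non-amalgamated case: a careful bookkeeping of how the operators $\pi_{i_j}(a_j)$ interact, through $S$ and $S^*$, with the conditional-expectation projections that govern the internal tensor products $\mathcal{K}_{i_1}^{\mathrm{o}}\otimes_D\cdots\otimes_D\mathcal{K}_{i_n}^{\mathrm{o}}$, combined with the abstract characterization of $B$ in Definition \ref{def:redamprod}, which forces any operator satisfying the word formula to lie in $B$. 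Once this is in place, complete positivity, unitality, and $(\theta_1*\theta_2)|_D=\id_D$ are formal; one should also dispatch the routine Hilbert-$D$-module facts used above (existence of the complemented decompositions, that internal tensor products of isometries are isometries, and adjointability of the relevant maps), which are standard.
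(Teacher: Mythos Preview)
The paper does not prove this statement; Theorem~\ref{thm:freeproducps} is quoted from \cite[Theorem 4.8.5]{BO} and used as a black box. Your outline is essentially the argument given in that reference (the amalgamated version of Boca's theorem): KSGNS-dilate each $\theta_i$ to a $*$-representation $\pi_i$ on a pointed Hilbert $D$-module $(\tilde{\mathcal K}_i,\eta_i)$ with $\langle\eta_i,\pi_i(\cdot)\eta_i\rangle=E_i^A$, form the free product of these pointed modules, and compress by the free product of the Stinespring isometries. So there is nothing in the present paper to compare against; your approach matches the cited source and is correct in outline.
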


\section{An adaptation of Dykema's Theorem.}
\label{sect:dykema}

In this section, we adapt Dykema's result in \cite{DykemaScan} to
the amalgamated case, and indeed we generalize the range of
applications, since our hypothesis are a bit different.

Let $(A, \Phi _A)$ and $(B, \Phi _B)$ be C*-algebras with faithful
conditional expectations $\Phi_A\colon A\to C$ and $\Phi _B\colon
B\to C$, where $C$ is a common C*-subalgebra of $A$ and $B$.
Consider the C*-algebra reduced amalgamated free product
$$(\gotA, \Phi)=(A, \Phi _A)*(B, \Phi _B).$$
Let $P\in C$ be a central projection in $C$ such that $PC=P\C$, and
let $\gamma \colon C\to \C$ be a faithful state on $C$. Then $\phi
:= \gamma \circ \Phi$ is a faithful state on $\gotA$, since by
\cite[Theorem 2.1]{ivanov}, $\Phi$ is also faithful.

Assume that there exists a partial isometry $v$ in $A$ such that
$v^*v=p$ and $vv^*=q$, where $p$ and $q$ are projections in $A$ such
that $p\le P$ and $q\le 1-p$ and there is $0<\lambda $ such that
$\phi (vx)=\lambda \phi (xv)$ for all $x\in A$.

Set $A_0:=pAp+(1-p)A(1-p)$ and $\gotAA:= C^*(A_0\cup B)$. We assume
that there is $y\in \gotAA$ such that  $yy^*=q$, $p_1:= y^*y\le p$,
so that $p_1\in \gotAA$. Assume moreover there is $0<\mu $ such that
$\phi (y^*x)=\mu \phi (xy^*)$ for all $x\in \gotA$, and that
$\lambda \mu <1$.

Define $w:= y^*v$, so that $ww^*=p_1$ and $w^*w= p$. Define
$p_n:=w^n(w^*)^n$ and note that $p_n\le p_{n-1}$ and $p_n\in p\gotA
p$ for all $n\ge 1$ (where $p_0:=p$).

Let $E\colon \gotA\to A_0$ be the composition of the conditional
expectations $E^A_{\gotA}\colon \gotA\to A$ and $E_A^{A_0}\colon
A\to A_0$, where $E^A_{\gotA}=\text{id}_A*\Phi _B$ is the canonical
conditional expectation given by Theorem \ref{thm:freeproducps}, and
$$E_A^{A_0}(a)=pap+(1-p)a(1-p)$$
for $a\in A$.

\begin{lemma}
\label{lem:compatiblePhi} With the above notation, we have
$\Phi\circ E=\Phi$, and in particular $\phi \circ E=\phi$.
\end{lemma}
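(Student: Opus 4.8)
The plan is to show that $\Phi \circ E = \Phi$ by decomposing $E$ according to its definition as $E_A^{A_0} \circ E_{\gotA}^A$ and checking compatibility at each stage. First I would recall that $E_{\gotA}^A = \mathrm{id}_A * \Phi_B$ is the canonical conditional expectation from $\gotA$ onto $A$ produced by Theorem \ref{thm:freeproducps} (with $D = C$, taking $\theta_1 = \mathrm{id}_A$ and $\theta_2 = \Phi_B \colon B \to C \hookrightarrow A$). Since this construction is carried out over the amalgam $C$, it is compatible with the conditional expectations onto $C$: that is, $\Phi_A \circ E_{\gotA}^A = \Phi$. The cleanest way to see this is to evaluate both sides on a reduced word $a_1 a_2 \cdots a_n$ with $a_j \in A_{i_j}^{\mathrm o}$, $i_1 \ne i_2 \ne \cdots \ne i_n$; by the last displayed formula in Theorem \ref{thm:freeproducps}, $E_{\gotA}^A$ sends such a word to $\theta_{i_1}(a_1) \cdots \theta_{i_n}(a_n)$, and since $\theta_2 = \Phi_B$ kills $B^{\mathrm o} = \ker \Phi_B$, the image is $0$ unless every letter comes from $A$, i.e. unless the word already lies in $A^{\mathrm o}$ (or in $C$); in the surviving cases $\Phi_A$ of the image clearly agrees with $\Phi$ of the original word. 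By linearity and density this gives $\Phi_A \circ E_{\gotA}^A = \Phi$.

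Next I would handle the second stage. The map $E_A^{A_0}(a) = pap + (1-p)a(1-p)$ compresses to the two corners cut out by the complementary projections $p$ and $1-p$ of $A$. Since $p \le P \le 1$ and $P$ is central in $C$ with $PC = P\C$, one checks that $p$ commutes with $C$ (indeed $pc = \gamma(Pc \cdot P^{-1}\!) \cdots$ — more simply, $pC \subseteq PC = P\C$ forces $pc = \gamma(c)p$ in the relevant sense, though I only need that $p$ is in the multiplier/commutant of $C$), so that $\Phi_A(pap) = p\,\Phi_A(a)\,p$ and similarly for $1-p$. Hence $\Phi_A \circ E_A^{A_0} = \Phi_A$ as maps $A \to C$. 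Wait — I should be careful here: $p$ need not literally be central, but $\Phi_A$ is $C$-bimodular and $p$ lies over the central projection $P$ of $C$, and the key identity $\Phi_A(pxp) = p\Phi_A(x)p$ follows because $P \Phi_A(pxp) = \Phi_A(Ppxp) = \Phi_A(pxp)$ and $\Phi_A(pxp) \in PC = P\C$, so the corner projections behave well; I would spell out exactly why $\Phi_A(pap) = p\Phi_A(a)p$ using that $p = pP$ and $PC = P\C$ is one-dimensional. Composing the two stages: $\Phi \circ E = \Phi_A \circ E_A^{A_0} \circ E_{\gotA}^A = \Phi_A \circ E_{\gotA}^A = \Phi$. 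The ``in particular'' clause is then immediate by applying $\gamma$ on the left.

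The step I expect to be the main obstacle is the second one: pinning down precisely why $\Phi_A(pap) = p\Phi_A(a)p$ and $\Phi_A((1-p)a(1-p)) = (1-p)\Phi_A(a)(1-p)$, which is where the hypothesis $PC = P\C$ (and $p \le P$) is actually used. If $p$ genuinely commutes with all of $C$ this is just $C$-bimodularity of $\Phi_A$; the hypotheses are arranged so that this is essentially the case after compressing by $P$. I would present this as a short lemma-internal computation: write $a = pap + pa(1-p) + (1-p)ap + (1-p)a(1-p)$, apply $\Phi_A$, and observe that each off-diagonal term $pa(1-p)$ has $\Phi_A(pa(1-p)) \in C$ with $p\,\Phi_A(pa(1-p))\,(1-p) = \Phi_A(pa(1-p))$ by bimodularity, forcing it into the $(p, 1-p)$ corner of $C$ relative to $P$ — but since $PC = P\C$ is one-dimensional, that corner vanishes, so $\Phi_A(pa(1-p)) = 0$, and likewise $\Phi_A((1-p)ap) = 0$. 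Therefore $\Phi_A(a) = \Phi_A(pap) + \Phi_A((1-p)a(1-p)) = \Phi_A(E_A^{A_0}(a))$, which is exactly what is needed. Everything else is formal bookkeeping with the reduced-word formula from Theorem \ref{thm:freeproducps}.
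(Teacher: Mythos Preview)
Your reduction is sound: once one knows $\Phi_A\circ E_{\gotA}^A=\Phi$ (your first step, which is correct and exactly the implicit reduction the paper makes), what remains is $\Phi_A\circ E_A^{A_0}=\Phi_A$, i.e.\ $\Phi_A(pa(1-p))=0$ for all $a\in A$. The problem lies in how you justify this last identity.

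You write that ``$p\,\Phi_A(pa(1-p))\,(1-p) = \Phi_A(pa(1-p))$ by bimodularity''. But $\Phi_A$ is only $C$-bimodular, and $p$ is not assumed to lie in $C$; the hypotheses only give $p\le P$ with $P\in C$ central and $PC=P\C$. From $C$-bimodularity you correctly extract $\Phi_A(pa(1-p))=P\,\Phi_A(pa(1-p))\in PC=P\C$, so $\Phi_A(pa(1-p))=\alpha P$ for some scalar $\alpha$; but nothing you have written forces $\alpha=0$. Indeed, without further hypotheses the claim is false: take $C=\C$, $P=1$, $A=M_2(\C)$, $p=e_{11}$, and let $\Phi_A(a)=\mathrm{Tr}(\rho a)$ for a faithful density matrix $\rho$ with $\rho_{21}\ne 0$; then $\Phi_A(pa(1-p))=\Phi_A(e_{12})=\rho_{21}\ne 0$ for $a=e_{12}$.

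What makes the lemma true is precisely the hypothesis you never invoke: the partial isometry $v$ with $v^*v=p$, $vv^*=q\le 1-p$, and $\phi(vx)=\lambda\,\phi(xv)$. The paper's proof uses this directly: since $\Phi_A(pa(1-p))=\alpha P$, one has $\alpha=\phi(pa(1-p))/\phi(P)$; writing $p=v^*v$ and using the scaling relation (applied to $v^*$) gives $\phi(v^*va(1-p))=\lambda^{-1}\phi(va(1-p)v^*)$, and $va(1-p)v^*=0$ because $v^*=pv^*$ so $(1-p)v^*=0$. This is a two-line computation once you see that $v$ is the missing ingredient; your ``corner of $C$'' argument cannot substitute for it.
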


\begin{proof}
It suffices to show that $\Phi_A (pa(1-p))=0$ for all $a\in A$. For
$a\in A$ we have
\begin{align*}
\Phi_A (pa(1-p)) &  = \Phi_A(Ppa(1-p))=\phi (pa(1-p))\phi (P)^{-1}P
\\ & = \phi (v^*va(1-p))\phi (P)^{-1}P
\\ & =\phi(va(1-p)v^*)(\lambda \phi (P))^{-1}P=0 ,
\end{align*}
showing the result.
\end{proof}

The hypothesis of the theorem of this section involves the following
concept.

\begin{definition}
\label{def:A-free} In the above situation, let $z$ be a projection
in $p\gotA p$. Then $z$ is said to be {\it $A$-free} if
\begin{enumerate}
\item $E(z)\in \C \cdot p$.
\item For any word $a\in \Lambda^{{\rm o}}((pAp) ^{{\rm o}}, C^*(z,p)^{{\rm o}})$ of length $>1$, we have
$E(a)=0$.
\end{enumerate}
\end{definition}

We can state now the following result, which is a generalization of
\cite[Theorem 3.1]{DykemaScan} to the amalgamated case.

\begin{theorem}
\label{thm:Dykema's} Let $(A, \Phi _A)$ and $(B,\Phi_B)$ be
C*-algebras with conditional expectations satisfying the conditions
stated above, and let $p$, $q$, $p_k=w^k(w^*)^k$, $k\ge 1$, be the
projections defined above. Assume that, for each $k\ge 1$, the
projections $p_k$ are $A$-free. Suppose in addition that $q\gotAA q$
contains a unital diffuse abelian C*-subalgebra which is contained
in the centralizer of $\phi$ in $\gotA$, and that $p$ is full in
$\gotA$. Then $\gotA$ is purely infinite and simple.
\end{theorem}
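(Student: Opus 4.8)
The plan is to follow Dykema's strategy from \cite[Theorem 3.1]{DykemaScan}, adapted to the amalgamated setting where scalars are replaced by the corner algebra $A_0 = pAp + (1-p)A(1-p)$ and the relevant state is $\phi = \gamma\circ\Phi$. The heart of the argument is to show that $p\gotA p$ is purely infinite and simple; since $p$ is assumed full in $\gotA$, a standard Morita-type argument then upgrades this to $\gotA$ itself (pure infiniteness and simplicity pass from a full corner to the whole algebra). So first I would reduce to the corner $p\gotA p$.

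Next I would analyze the structure of $p\gotA p$ using the nested sequence of projections $p = p_0 \ge p_1 \ge p_2 \ge \cdots$, where $p_k = w^k(w^*)^k$ and $w = y^*v$ is a partial isometry with $w^*w = p$, $ww^* = p_1 \le p$. The $A$-freeness hypothesis on each $p_k$ is exactly what is needed to run Dykema's ``excision'' computations: for a nonzero positive element $a \in p\gotA p$ one wants to produce, via multiplication by suitable partial isometries built from $w$, $v$, $y$ and elements of $pAp$, an element close to a nonzero multiple of $p$. The key technical step is an estimate showing that if $a \in p\gotA p$ is positive with $E(a)$ having a prescribed lower bound, then conjugating $a$ by $(w^*)^k$ for large $k$ makes the ``off-diagonal'' (i.e.\ $\ker E$) part negligible — here one exploits that the $p_k$ decrease and that $A$-freeness forces $E$ to kill all reduced words of length $>1$ in $(pAp)^{\mathrm{o}}$ and $C^*(p_k,p)^{\mathrm{o}}$. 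Simultaneously, the diffuse abelian subalgebra sitting in $q\gotAA q \cap$ (centralizer of $\phi$) is used, as in Dykema's proof, to manufacture for any $\varepsilon>0$ a projection $e \le q$ (or a partial isometry) with $\phi(e)$ arbitrarily small yet $e$ equivalent to a projection dominating the relevant piece of $a$; this is the mechanism that lets one ``absorb'' the diagonal part $E(a)$ and conclude that $a$ is not just full but that $\overline{p\gotA p\, a\, p\gotA p} = p\gotA p$, with the sharper infinite-projection conclusion.

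Concretely, I would (i) prove a lemma that for $a \in p\gotA p$ positive with $\|E(a) - c p\|$ small for some scalar $c>0$, there is a partial isometry $t \in p\gotA p$ with $t^*at$ within $\varepsilon$ of $c\,p_k$ for suitable $k$ — this is where $A$-freeness and the commutation relations $\phi(vx)=\lambda\phi(xv)$, $\phi(y^*x)=\mu\phi(xy^*)$ with $\lambda\mu<1$ enter, the inequality $\lambda\mu<1$ guaranteeing that $\phi(p_k)\to 0$ and hence that the telescoping works; (ii) use the diffuse abelian subalgebra of $q\gotAA q$ to show $p_k \precsim q' \le p$ with $q'$ orthogonal to enough room, so that $p$ (hence $c\,p_k$) is a properly infinite full projection, giving $p \precsim c\,p_k \precsim t^*at \precsim a$; (iii) combine to get that every nonzero positive $a \in p\gotA p$ dominates (in the Cuntz sense) a full properly infinite projection, which is precisely pure infiniteness and simplicity of $p\gotA p$. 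Then transfer to $\gotA$ via fullness of $p$.

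The main obstacle, as in Dykema's original argument, is step (i): controlling an arbitrary positive element $a$ of the reduced amalgamated free product, whose ``Fourier expansion'' into reduced words is only conditionally convergent, and showing that after conjugation by $(w^*)^k$ the non-diagonal part is genuinely small in norm (not merely in the $\phi$-$2$-norm). One must approximate $a$ first by a finite sum of reduced words, track how each word transforms under the conjugations and multiplications, and use $A$-freeness of all the $p_k$ simultaneously to see the cancellations; the amalgamation means these estimates are over $A_0$-valued rather than scalar-valued inner products, so the Cauchy–Schwarz steps must be done in the Hilbert $A_0$-module picture. Making the bookkeeping of lengths of reduced words compatible with the nested corners $p_k\gotA p_k$ is the delicate part, but it is a direct adaptation of \cite[Section 3]{DykemaScan} once Lemma \ref{lem:compatiblePhi} is in hand to ensure $\phi\circ E = \phi$.
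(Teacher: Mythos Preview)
Your proposal claims to follow Dykema's strategy, but what you actually sketch is \emph{not} Dykema's argument: you are attempting a direct Cuntz-comparison proof (showing that every nonzero positive $a\in p\gotA p$ Cuntz-dominates a properly infinite full projection), whereas both \cite[Theorem 3.1]{DykemaScan} and the present paper proceed structurally, via a crossed-product decomposition. Concretely, the paper organizes $p\gotA p$ as $C^*(\gotA_{(-\infty,\infty)}\cup\{w\})$, where $\gotA_{(-\infty,\infty)}$ is an increasing union of algebras $\gotA_{(-\infty,n]}$ built from reduced words with controlled ``height'' function $t_j$; the diffuse abelian subalgebra is used (via \cite[3.2]{DykemaTAMS}) to prove each $\gotA_{(-\infty,n]}$ is simple, the $A$-freeness of the $p_k$ is used to establish freeness of $\{p_{n+k}\}$ from $p_{n+1}\gotA_{(-\infty,n]}p_{n+1}$, and finally $p\gotA p$ is exhibited as a quotient of the crossed product $\gotA_{(-\infty,\infty)}\rtimes_\sigma\mathbb N$, to which \cite[Theorem 2.1(ii)]{DykemaRordamCJM} applies once one checks (Claim 6) that the extended automorphism is multiplier outer and (Claim 7) that every hereditary subalgebra contains a projection equivalent to some $p_n$.

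The gap in your approach is precisely the step you flag as the ``main obstacle'': passing from $2$-norm control to norm control of the off-diagonal part of an arbitrary positive $a$ after conjugation by $(w^*)^k$. There is no known mechanism for this in the amalgamated free product; $A$-freeness of $p_k$ tells you that certain expectations vanish, but it does not by itself give operator-norm decay of $(w^*)^k(a-E(a))w^k$. Moreover, your step (ii) is underspecified: the diffuse abelian subalgebra in $q\gotAA q$ is not used in the paper to produce comparison relations like $p_k\precsim q'$, but rather to invoke simplicity results for free products with a diffuse factor. Without the crossed-product machinery and the Dykema--R\o rdam theorem, your outline does not yield a proof.
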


Before we prove Theorem \ref{thm:Dykema's}, let us show that it
provides a generalization of Dykema's result.

\begin{corollary} \cite[Theorem 3.1]{DykemaScan}
\label{corol:Dykema's} Let $A$ and $B$ be C*-algebras with faithful
states $\phi_A$ and $\phi_B$ respectively. Consider the C*-algebra
reduced free product
$$(\gotA, \phi )= (A,\phi_A)* (B, \phi _B).$$
Suppose there is a partial isometry $v\in A$, whose range projection
$q=vv^*$, and domain projection $p=v^*v$, are orthogonal, and such
that, for some $0 < \lambda <1$, we have $\phi _A(vx)= \lambda
\phi_A(xv)$ for all $x\in A$. Let
$$A_{00} = \C p \oplus \C q \oplus (1-p-q)A(1-p-q)$$
and let $\gotAAA = C^*(A_{00}\cup B)$. Suppose that $q$ is
equivalent in the centralizer of the restriction of $\phi$ to
$\gotAAA$ to a subprojection of $p$, and that the centralizer of the
restriction of $\phi$ to $q\gotAAA q$ contains a unital abelian
subalgebra on which $\phi$ is diffuse. Suppose that $p$ is full in
$\gotA$.

Then $\gotA$ is simple and purely infinite.
\end{corollary}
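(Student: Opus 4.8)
\textbf{Proof proposal for Corollary \ref{corol:Dykema's}.}

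The plan is to derive the corollary from Theorem \ref{thm:Dykema's} by specializing to the non-amalgamated case $C=\C$, $P=1$, and by producing the partial isometry $y\in\gotAAA$ and the scalar $\mu$ required by the hypotheses of the theorem. First I would set up the dictionary: take $C=\C$, $\gamma=\id_{\C}$, so that $\phi=\Phi$ and everything in Definition \ref{def:A-free} reads with $E\colon\gotA\to A_0$ as before. The partial isometry $v\in A$ is given, with $p=v^*v$, $q=vv^*$ orthogonal, $p\le P=1$, $q\le 1-p$, and $\phi_A(vx)=\lambda\phi_A(xv)$ with $0<\lambda<1$; since $\phi|_A=\phi_A$ this is exactly the hypothesis on $v$ in the theorem. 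Next, note that $A_0:=pAp+(1-p)A(1-p)\subseteq A_{00}=\C p\oplus\C q\oplus(1-p-q)A(1-p-q)$ is not literally true, so I would instead check that the two-step conditional expectation $E$ and the algebra $\gotAA=C^*(A_0\cup B)$ of the theorem compare correctly with $\gotAAA=C^*(A_{00}\cup B)$; in fact $A_{00}\subseteq A_0$, hence $\gotAAA\subseteq\gotAA$, which is the direction we need for producing $y$.

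The heart of the argument is to manufacture $y\in\gotAAA\subseteq\gotAA$ with $yy^*=q$, $y^*y=:p_1\le p$ and a Perron-type identity $\phi(y^*x)=\mu\phi(xy^*)$ for all $x\in\gotA$, with $\lambda\mu<1$. By hypothesis $q$ is equivalent, in the centralizer of $\phi|_{\gotAAA}$, to a subprojection $p_1$ of $p$; let $y$ implement this equivalence, so $y\in\gotAAA$, $yy^*=q$, $y^*y=p_1\le p$, and $y$ lies in the centralizer of $\phi|_{\gotAAA}$. The point is that $y$ being in the centralizer gives $\phi(y^*x)=\phi(xy^*)$ for $x\in\gotAAA$; to get this for all $x\in\gotA$ I would use the freeness of $B$ from $A$ over $\C$ together with the module property of $\Phi$ and a reduction of a general word $x\in\Lambda^{\rm o}(A^{\rm o},B^{\rm o})$ to the relevant corners — concretely, $\phi(y^*x)$ and $\phi(xy^*)$ both vanish unless $x$ reduces into $\gotAAA$, where the centralizer property applies — so $\mu=1$ works, and then $\lambda\mu=\lambda<1$ as required. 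Finally the two remaining hypotheses of Theorem \ref{thm:Dykema's} transfer verbatim: $q\gotAAA q\subseteq q\gotAA q$ contains the postulated unital abelian subalgebra on which $\phi$ is diffuse and which sits in the centralizer of $\phi$ in $\gotA$ (again by freeness, the centralizer of $\phi|_{\gotAAA}$ embeds into the centralizer of $\phi$ in $\gotA$), and $p$ is full in $\gotA$ by assumption.

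It then remains to verify that the projections $p_k=w^k(w^*)^k$, where $w=y^*v$, are $A$-free in the sense of Definition \ref{def:A-free}. This is the step I expect to be the main obstacle, since it is precisely the combinatorial core that Dykema establishes in the unamalgamated setting and that Theorem \ref{thm:Dykema's} takes as a hypothesis. The strategy is to show that $C^*(z,p)$ for $z=p_k$ sits inside an algebra that is free from $pAp$ over $\C p$ with respect to $E$: one writes $w=y^*v$ with $y\in\gotAAA=C^*(A_{00}\cup B)$ and expands $p_k$ as a word alternating between letters coming from $A$ (through $v$, $v^*$, and $A_{00}\subseteq A$) and letters from $B$; using the defining moment condition $\phi_A(vx)=\lambda\phi_A(xv)$ to push all occurrences of $v$ to one side, one reduces any word in $\Lambda^{\rm o}((pAp)^{\rm o},C^*(p_k,p)^{\rm o})$ of length $>1$ to an alternating word in $A^{\rm o}$ and $B^{\rm o}$ of length $\ge 2$ sandwiched by corner projections, whence $E$ of it is $0$ by construction of $E$ and condition (4) of Definition \ref{def:redamprod}. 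The bookkeeping — keeping track of how $p$, $q$, $p_1$ interleave and confirming that no cancellation produces a length-one word landing outside $\ker$ of the relevant expectation — is the delicate part, but it is a direct adaptation of \cite[proof of Theorem 3.1]{DykemaScan} with $\C$ replaced by the common corner structure, and no genuinely new phenomenon arises because here $C=\C$ is one-dimensional.
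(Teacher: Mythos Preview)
Your overall plan---specialize Theorem \ref{thm:Dykema's} with $C=\C$, $P=1$, $\gamma=\id$, take $y\in\gotAAA\subseteq\gotAA$ with $\mu=1$---is exactly the paper's, but two of your steps are genuine gaps rather than routine verifications.

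\textbf{Centralizer extension.} You claim $\phi(y^*x)=\phi(xy^*)$ for all $x\in\gotA$ because ``both vanish unless $x$ reduces into $\gotAAA$''; this is not a correct reduction, and the assertion that ``the centralizer of $\phi|_{\gotAAA}$ embeds into the centralizer of $\phi$ in $\gotA$'' is simply restating the conclusion. The paper proceeds in two steps. First, by (an extension of) \cite[Proposition 2.8]{DykemaScan}, $(p+q)\gotA(p+q)$ is generated by $(p+q)\gotAAA(p+q)$ and $(p+q)A(p+q)$, and these are \emph{free with amalgamation over $p\C\oplus q\C$}; since $y\in(p+q)\gotAAA(p+q)$ centralizes $\phi$ there, freeness propagates this to all of $(p+q)\gotA(p+q)$. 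Second, one checks $\phi\bigl((p+q)\gotA(1-p-q)\bigr)=0$, which together with $y\in(p+q)\gotA(p+q)$ gives the centralizer property on all of $\gotA$. The same two-step argument, with the additional observation $\phi(p\gotAAA q)=0$, handles the diffuse abelian subalgebra~$\mathcal D$.

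\textbf{$A$-freeness of $p_k$.} Your plan to ``push all occurrences of $v$ to one side using $\phi_A(vx)=\lambda\phi_A(xv)$'' is confused: that identity is a relation among values of the state, not an algebraic rewriting rule, so it cannot be used to transform words. The paper's argument is purely algebraic and does not use the scaling identity at all. One shows by induction on $k$ that
\[
p_k\ \in\ S:=p\C+\overline{\sum_{i\ge 0}pB^{\rm o}(A^{\rm o}B^{\rm o})^ip},
\]
which immediately yields $A$-freeness. For the inductive step, write $p_{k+1}=(y^*v)p_k(v^*y)$ and expand $y^*v$ using $y\in\gotAAA$ as a sum of terms in $pB^{\rm o}(A_{00}^{\rm o}B^{\rm o})^iv$; then check that sandwiching either the scalar part $p$ or an element of $pB^{\rm o}(A^{\rm o}B^{\rm o})^jp$ between such expressions lands back in $S$, using only $v\in A^{\rm o}$ and $A_{00}^{\rm o}\subseteq A^{\rm o}$. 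The ``delicate bookkeeping'' you anticipate is replaced by this single structural containment.
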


\bigskip

\noindent {\it Proof of Corollary \ref{corol:Dykema's}.} We have to
show that the hypothesis of Theorem \ref{thm:Dykema's} are
satisfied. We take $P=1$, and $\Phi =\phi$.

By hypothesis, there exists an element $y$ in the centralizer of the
restriction of $\phi$ to $\gotAAA$ such that $yy^*=q$ and
$p_1:=y^*y\le p$. Note that $y\in (p+q)\gotAAA (p+q)$. By (a slight
extension of) \cite[Proposition 2.8]{DykemaScan}, the algebra
$(p+q)\gotA (p+q)$ is generated by $(p+q)\gotAAA (p+q)$ and
$(p+q)A(p+q)$, which are free with amalgamation over $p\C \oplus
q\C$. It follows that $y$ belongs to the centralizer of the
restriction of $\phi$ to $(p+q)\gotA (p+q)$. But since $y\in
(p+q)\gotA (p+q)$ and $\phi ((p+q)\gotA (1-p-q))= 0$, we get that
$y$ belongs to the centralizer of $\phi$ in $\gotA$.

Also by hypothesis, the centralizer of the restriction of $\phi$ to
$q\gotAAA q$ contains a unital abelian subalgebra $\mathcal D$ on
which $\phi$ is diffuse. Observe that, since $\phi (p\gotAAA q)=0$,
we get that $\mathcal D$ is contained in the centralizer of the
restriction of $\phi$ to $(p+q)\gotAAA (p+q)$. Now the same argument
as before shows that $\mathcal D$ is contained in the centralizer of
$\phi$ in $\gotA$.

It only remains to check that all the projections $p_k$ are
$A$-free. We will show by induction on $k$ that $p_k$ belongs to the
set \begin{equation} \label{eq:alternating} S: = p\C +\overline{\sum
_{i\ge 0} pB^{{\rm o}}(A^{{\rm o}}B^{{\rm o}})^ip} .
\end{equation}
This clearly shows that the projections $p_k$ are $A$-free.

Note that every element in $p\gotAAA p$ belongs to $S$. Now for
$k=1$, the result is clear since $p_1=y^*vv^* y=y^*qy\in p\gotAAA
p$. Assume that $p_k$ belongs to $S$, and let us show that
$p_{k+1}\in S$. Note that $y^*v$ belongs to the closed linear span
of
$$\sum _{i\ge 0} pB^{{\rm o}}(A_{00}^{{\rm o}}B^{{\rm o}})^iv .$$
Observe now that $p_{k+1}= (y^*v)p_k(v^*y)$, so we are led to
consider either terms of the form
$$(pB^{{\rm o}}(A_{00}^{{\rm o}}B^{{\rm o}})^iq)vv^*(qB^{{\rm o}}(A_{00}^{{\rm o}}B^{{\rm o}})^jp)
$$
or terms of the form
$$(pB^{{\rm o}}(A_{00}^{{\rm o}}B^{{\rm o}})^iv) (pB^{{\rm o}}A^{{\rm o}}\cdots A^{{\rm o}}B^{{\rm o}}p)(v^*B^{{\rm o}}(A_{00}^{{\rm o}}B^{{\rm o}})^jp) .$$
In the former case we get an element in $p\gotAAA p\subset S$. In
the latter case, since $v\in A^{{\rm o}}$, we get an element in
$pB^{{\rm o}}(A^{{\rm o}}B^{{\rm o}})^lp$ for some $l\ge 1$. So, in
either case, we obtain an element in $S$, as desired. \qed

\bigskip

\noindent {\it Proof of Theorem \ref{thm:Dykema's}.} The proof
follows the steps of the one of \cite[Theorem 3.1]{DykemaScan}. Note
that there was an error in the statement and application of
\cite[Theorem 2.1(i)]{DykemaRordamCJM}. The word ``outer'' that
appears there should be ``multiplier outer,'' i.e., outer relative
to the multiplier algebra of $A$, instead of relative to the
unitization of $A$. This led to deficiencies in the proof of
\cite[Theorem 3.1]{DykemaScan}, which have been corrected in
\cite{ADR}. This involves a change in the order of the different
steps of that proof. Here we will outline the main steps of the
proof of our result, referring to \cite{DykemaScan} for the proofs
which are identical.

\medskip

Recall that $w=y^*v$. Since $\phi (vx)=\lambda \phi (xv)$ and $\phi
(y^*x)=\mu \phi (xy^*)$ for all $x\in \gotA$, we have $\phi (wx)=
(\lambda \mu)\phi (xw)$ for all $x\in \gotA$. It follows that
$$\phi (p_k)=(\lambda\mu)^k\phi (p),$$
so that, recalling that $\lambda \mu <1$, we have that  $\lim_{k\to
\infty} \phi (p_k)=0$.

\medskip

Since $p$ is full in $\gotA$, we need only to show that $p\gotA p$
is purely infinite and simple. By assumption $q\gotAA q$ contains a
unital diffuse abelian subalgebra $\mathcal D$ which is contained in
the centralizer of $\phi$ in $\gotA$. Observe that
$$q\gotAA q=yy^*\gotAA yy^*=y(y^*\gotAA y)y^*\subseteq y\gotAA y^*\subseteq q\gotAA q  \, ,$$
so that $q\gotAA q=y\gotAA y^*$, and $p_1\gotAA p_1 =y^*y \gotAA
y^*y =y^*(y\gotAA y^*)y =y^*q\gotAA q y$. It follows that
$y^*\mathcal D y $ is a unital diffuse abelian subalgebra of
$p_1\gotAA p_1$ which is contained in the centralizer of $\phi$,
since for $d\in \mathcal D$ and $a\in \gotA$ we have:
$$\phi ((y^*dy)a)=\mu \phi (dyay^*)= \mu \phi (yay^* d)= \mu \mu ^{-1}
\phi (ay^*dy)=\phi (a(y^*dy)).$$ Therefore $p_1\gotAA p_1$ contains
a unital diffuse abelian subalgebra which is contained in the
centralizer of $\phi$ in $\gotA$.

\medskip

Note that $A$ is generated by $A_0\cup \{v\}$, because
$pA(1-p)=v^*vA(1-p)\subseteq v^*(1-p)A(1-p)$. It follows that
$\gotA=C^*( \gotAA \cup \{w\})$, and thus $p \gotA p=C^*(p\gotAA p
\cup \{w\})$.

\medskip

Note that since $p\le P$ and $PC=P\C$, we have
$$(p\gotA p)^{{\rm o}}=\{x\in
p\gotA p\mid \Phi (x)=0\}=\{x\in p\gotA p\mid \phi (x)=0\}.$$

\medskip

Let $\Theta$ be the set of all
$$x= x_1x_2\cdots x_n\in \Lambda^{{\rm o}} ((p\gotAA p)^{{\rm o}},\{w^k\mid k\ge 1\}\cup \{(w^*)^k\mid k\ge 1\})$$
such that whenever $2\le j\le n-1$  and $x_j \in (p\gotAA p)^{{\rm
o}} $:
\begin{align*}
&\text{ if } \, x_{j-1}= w^i \, (i>0) \text{ and } x_{j+1}= (w^*)^j
\, (j>0)\, \text{ then }\, x_j\in p\gotAA p\ominus pA_0p\\ & \text{
if } \, x_{j-1}= (w^*)^i \, (i>0) \text{ and } x_{j+1}= w^j \,
(j>0)\, \text{ then }\, x_j\in p_1\gotAA p_1\ominus y^*(qA_0q)y.
\end{align*}
(Note that $E$ restricts to a conditional expectation $p\gotAA p\to
pA_0p$ and $y^*E(y\cdot y^*)y$ provides one from $p_1\gotAA p_1$
onto $y^*(qA_0q)y$.) Note that $\text{span} (\{p\}\cup \Theta )$ is
the $*$-algebra generated by $p\gotAA p \cup \{w\}$ (see
\cite{DykemaScan}).

\medskip

For $x\in \Theta$ of length $\ell (x)$ and $0\le j \le \ell (x)$,
let $t_j(x)$ be the number of $w$ minus the number of $w^*$
appearing in the first $j$ letters of $x$. (Here, of course,
$t_0(x)=0$.) For an interval $I$ of $\Z$ containing $0$, define
$$\Theta _I =\{ x\in \Theta \mid t_{\ell (x)} =0 \text{ and } \forall 1\le j\le \ell (x), t_j(x)\in I\}.$$
Then $\text{span} (\Theta _I \cup \{p\})$ is a $*$-subalgebra of
$p\gotA p$. Let $\gotA _I=\ol{\text{span}} (\Theta _I \cup \{p\})$.

\medskip

\noindent {\it Claim 1:} $E(x)=0$ for all $x\in \Theta
_{(-\infty,\infty)}\setminus (p\gotAA p)^{{\rm o}}$ and $\phi (x)=
0$ for all $x\in \Theta _{(-\infty,\infty)}$.

\medskip

\noindent {\it Proof of Claim 1.} See the proof of \cite[Claim
3.3]{DykemaScan}. \qed

\medskip

\noindent {\it Claim 2:} The subalgebras $w^*\gotA _{(-\infty,0]}w$
and $\gotA _{[0,\infty)}$ are free with amalgamation over $pA_0p$
(with respect to the restrictions of the conditional expectation
$E$).

\medskip

\noindent {\it Proof of Claim 2.} See the proof of \cite[Claim
3.4]{DykemaScan}. \qed

\medskip

\noindent {\it Claim 3:} $\gotA _{(-\infty, 0]}$ is simple.

\medskip

\noindent {\it Proof of Claim 3.} $\gotA _{(-\infty , 0]}$ is
generated by $w^*\gotA _{(-\infty, 0]}w $ and $p\gotAA p$, which by
Claim 2 are free with amalgamation over $pA_0p$. Let $\gotAAp
=C^*(B\cup (\C p+(1-p)A(1-p)))$. Then $p\gotAA p$ is the C*-algebra
generated by $pA_0p$ and $p\gotAAp p$, which are free with respect
to $\phi$ (cf. \cite[2.8]{DykemaTAMS}). By using Claim 2 we get that
$\gotA _{(-\infty , 0]}$ is generated by $w^*\gotA _{(-\infty ,
0]}w$ and $p\gotAAp  p$, which are free with respect to $\phi$ (with
amalgamation over $p\C$).

But now $w^*\gotA _{(-\infty , 0]}w$ contains $w^*\gotAA w = w^*
p_1\gotAA p_1w$ and, by the same argument applied above to
$p_1\gotAA p_1$, we can deduce that $w^* p_1\gotAA p_1w $ contains a
unital diffuse abelian subalgebra, which is contained in the
centralizer of $\phi $ in $\gotA $. So \cite[3.2]{DykemaTAMS} gives
that $\gotA _{(-\infty , 0]}$ is simple.\qed

\medskip

\noindent {\it Claim 4:} For all $n\ge 0$, the C*-algebra $\gotA
_{(-\infty ,n]}$ is simple.

\medskip

\noindent {\it Proof of Claim 4.} Use the same proof as in
\cite[proof of Claim 3.6]{DykemaScan}.\qed

\medskip

\noindent {\it Claim 5:} Let $n\ge 0$, $k\ge 1$. Then $p_{n+1}\gotA
_{(-\infty , n]} p_{n+1}$ and $\{p_{n+k}\}$ are free (with
amalgamation over $\C p_{n+1}$) with respect to $\phi$ (after
scaling).

\medskip

\noindent {\it Proof of Claim 5.} As in \cite[proof of Claim
3.7]{DykemaScan}, we may reduce to show that $w^*\gotA _{(-\infty ,
0]}w$ and $\{p_{k-1}\}$ are free (with amalgamation over $\C p$).
Obviously we can assume that $k\ge 2$.

Recall that, by hypothesis, $p_{k-1}$ is $A$-free, so that
$E(p_{k-1})\in \C p$. Set $b:= p_{k-1}-\frac{\phi (p_{k-1})}{\phi
(p)}p$. Then $b$ belongs to $\gotA _{[0,\infty)}\ominus pA_0p$,
because $E(b)=0$. We have to show that $\phi (x)=0$ for all $x\in
\Lambda^{{\rm o}}((w^*\gotA _{(-\infty, 0]}w) ^{{\rm o}}, \{b\})$,
Since $w^*\gotA _{(-\infty, 0]}w$ and $\gotA_{[0,\infty )}$ are free
with respect to $E$, with amalgamation over $pA_0p$ (Claim 2), we
are led to show that the words in $\Lambda^{{\rm o}}((pA_0p)^{{\rm
o}},\{b\})$ of length $>1$ belong to the kernel of $E$ (and so they
belong to $\gotA _{[0,\infty)}\ominus pA_0p$). But this follows from
the $A$-freeness hypothesis of $p_{k-1}$.\qed

\medskip

There exists an injective endomorphism $\sigma \colon \gotA
_{(-\infty, \infty)}\to \gotA _{(-\infty, \infty)}$ given by $\sigma
(a)= waw^*$. Since $p\gotA p=C^*(\gotA _{(-\infty ,\infty)}\cup
\{w\})$, $p\gotA p$ is a quotient of $\gotA
_{(-\infty,\infty)}\rtimes_{\sigma} \N$. It is enough thus to show
that  $\gotA _{(-\infty,\infty)}\rtimes_{\sigma} \N$ is simple and
purely infinite.

\medskip

\noindent {\it Claim 6:} For all $m\ge 1$, $\alpha ^m$ is multiplier
outer in $\overline{\gotA} _{(-\infty,\infty)}$, where
$\overline{\gotA} _{(-\infty,\infty)}$ denotes the inductive limit
$\lim (\gotA _{(-\infty,\infty)}\overset{\sigma}{\longrightarrow
}\gotA _{(-\infty,\infty)}\overset{\sigma}{\longrightarrow} \cdots
)$

\medskip

\noindent {\it Proof of Claim 6.} This is proved in Lemma 2.3 of
\cite{ADR}. \qed

\medskip

\noindent {\it Claim 7:} Let $D$ be a nonzero hereditary
C*-subalgebra of $\gotA _{(-\infty,\infty)} $. Then there is a
projection in $D$ that is equivalent in $\gotA _{(-\infty, \infty)}$
to $p_n$ for some $n$.

\medskip

The proof of Claim 7 is exactly the same as the corresponding one in
\cite[proof of Claim 3.8]{DykemaScan}.

\medskip

Claim 6 and Claim 7 show that the hypothesis in \cite[Theorem
2.1(ii)]{DykemaRordamCJM} are satisfied in our situation, and so we
get from this result that $\gotA _{(-\infty,\infty)}\rtimes_{\sigma}
\N$ is simple and purely infinite. (Recall that the hypothesis of
$\alpha ^m$ being outer in \cite{DykemaRordamCJM} must be
interpreted as being multiplier outer.) \qed

\section{Purely infinite simple reduced graph C*-algebras}
\label{sect:pur-infinite}

We first give an application of Dykema's result (Corollary
\ref{corol:Dykema's}) to reduced free products of Cuntz algebras. It
was shown in \cite[Proposition 4.2]{AG2} that the C*-algebras
$\Cstred (E,C)$ in the next proposition are simple. By using
Dykema's Theorem, we can now show that they are also purely
infinite.

\begin{proposition}
\label{prop:one-vertex} Let $n,m>1$, and let $(E,C)$ be the
separated graph with one vertex $v$ and with $C_v :=\{X,Y\}$, where
$|X|=n$ and $|Y|=m$. Then the reduced graph C*-algebra
$\Cstred(E,C)$ is purely infinite and simple.
\end{proposition}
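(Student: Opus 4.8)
The plan is to realize $\Cstred(E,C)$ as a reduced amalgamated free product to which Corollary \ref{corol:Dykema's} (Dykema's Theorem) applies. Since $E$ has a single vertex $v$, we have $A_0 = C(E^0) = \C v = \C$, so the amalgamation is over $\C$ and $\Cstred(E,C) = (A,\phi_A) * (B,\phi_B)$ is an ordinary reduced free product. Here $A = C^*(E_X) \cong \calO_n$ is the Cuntz algebra on the $n$ generators $\{e : e \in X\}$ (using (SCK1) and (SCK2): the $e^*e = v = 1$ and $\sum_{e\in X} ee^* = 1$), and similarly $B = C^*(E_Y) \cong \calO_m$, and $\phi_A$, $\phi_B$ are the canonical faithful conditional expectations onto $\C$, i.e.\ faithful states. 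So what must be checked is precisely the list of hypotheses in Corollary \ref{corol:Dykema's}.

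First I would exhibit the partial isometry. Pick two distinct edges $e_1, e_2 \in X$ and set $v_0 := e_1 e_2^*$. Then $v_0^* v_0 = e_2 e_1^* e_1 e_2^* = e_2 e_2^* =: p$ and $v_0 v_0^* = e_1 e_1^* =: q$, and $p \perp q$ since $e_1 \ne e_2$ (orthogonality of the range projections among $X$). Because $\phi_A$ is the canonical expectation and $v_0 \in A^{\mathrm o}$ is a reduced word of ``length one'' in the generators, one computes $\phi_A(v_0 x) = 0 = \phi_A(x v_0)$ for all $x \in A$ — actually one should track the scaling more carefully: the trace-like relation $\phi_A(v_0 x) = \lambda \phi_A(x v_0)$ holds with a specific $0 < \lambda < 1$ coming from the Cuntz-algebra KMS-type weighting of $\phi_A$ (for the symmetric expectation on $\calO_n$ one gets $\lambda = 1$, so in that case I would instead choose a slightly asymmetric faithful state on $A$, or note that $\lambda = 1$ is still admissible provided the remaining ``diffuse abelian centralizer'' hypothesis is met; I would check which reading of Dykema's statement the paper intends and pick the state accordingly).

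Next, with $A_{00} = \C p \oplus \C q \oplus (1-p-q)A(1-p-q)$ and $\gotAAA = C^*(A_{00} \cup B)$, I must produce inside the centralizer of $\phi|_{\gotAAA}$ an element $y$ with $yy^* = q$ and $y^*y \le p$, and a unital diffuse abelian subalgebra of $q\gotAAA q$ on which $\phi$ is diffuse. The key observation is that $\gotAAA$ contains the free product (over $\C$, appropriately cornered) of the corner algebras with $B \cong \calO_m$, and in particular $q \gotAAA q$ contains a reduced-free-product copy of something like $q A_{00} q * q(\text{image of }B)q$; inside it, a Haar-unitary (coming from a nontrivial element of the free factor built from $\calO_m$, e.g.\ $f_1 f_2^*$ for distinct $f_i \in Y$, suitably cornered) generates a diffuse abelian subalgebra in the centralizer of $\phi$ by the standard freeness computation (as in \cite{DykemaScan}, \cite{DykemaTAMS}). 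For $y$: since $q$ and $p$ are both nonzero projections dominated by $1$, and $\gotAAA$ (being a nontrivial reduced free product) is simple with a unique trace-like state on the relevant corners — more elementarily, since $\phi(q) = \phi(p) = $ (a fixed value) and there is enough room because $n, m > 1$ — one finds $y$ with $yy^* = q$, $y^*y \le p$ in the centralizer; concretely one can take $y$ of the form $w^* u w'$ built from the edges of $Y$ realizing the Murray–von Neumann subequivalence $q \precsim p$ within $\gotAAA$, using $m > 1$ to have the needed spare projections. Finally, fullness of $p$ in $\gotA$: since $p = e_2 e_2^* \ne 0$ and $\Cstred(E,C)$ is simple (by \cite[Proposition 4.2]{AG2}, as the excerpt recalls), every nonzero projection is automatically full, so this is immediate.

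The main obstacle, I expect, is the bookkeeping around the scaling constants $\lambda$ (and implicitly the compatibility of the chosen faithful states $\phi_A$, $\phi_B$ with the requirement $\lambda < 1$) together with verifying that the diffuse-abelian-centralizer and the subequivalence $q \precsim p$ can be arranged \emph{simultaneously inside $\gotAAA$} and in the centralizer of $\phi$. This is exactly the type of freeness-with-amalgamation argument carried out in the proof of Corollary \ref{corol:Dykema's} above (the induction showing the relevant projections lie in the alternating span $S$), so once the concrete $v_0$, $p$, $q$, $y$ and the Haar unitary are named, the verification of $A$-freeness and of the centralizer conditions runs along those same lines. The only genuinely new input needed for this proposition beyond Corollary \ref{corol:Dykema's} is the identification $C^*(E_X) \cong \calO_n$, $C^*(E_Y) \cong \calO_m$ and the observation that a single-vertex separated graph gives amalgamation over $\C$; after that it is an application of the Corollary.
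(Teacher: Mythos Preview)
Your identification of $\Cstred(E,C)$ as the reduced free product $(\calO_n,\phi_n)*(\calO_m,\phi_m)$ is correct and matches the paper. The genuine gap is your choice of partial isometry. With $v_0=e_1e_2^*$ one has $|\mu|=|\nu|=1$, so $v_0$ lies in the gauge-invariant UHF core of $\calO_n$, where $\phi_n$ restricts to the trace; hence $\phi_n(v_0x)=\phi_n(xv_0)$ for all $x$, i.e.\ $\lambda=1$. (Your claim that $\phi_A(v_0x)=0$ for all $x$ is false: take $x=e_2e_1^*$.) Neither of your proposed fixes works: the states $\phi_A,\phi_B$ are the canonical expectations fixed by the very definition of $\Cstred(E,C)$, so you are not free to deform them; and $\lambda<1$ is genuinely required in Corollary~\ref{corol:Dykema's}, since the proof of Theorem~\ref{thm:Dykema's} uses $\phi(p_k)=(\lambda\mu)^k\phi(p)\to 0$, which fails for $\lambda\mu=1$.

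The paper's remedy is to pick a partial isometry with nonzero gauge degree: set $p=e_1e_1^*$, $q=e_2e_1e_1^*e_2^*$, and $v=e_2e_1e_1^*$. Then $v^*v=p$, $vv^*=q$, $p\perp q$, and the KMS property of $\phi_n$ (at inverse temperature $\log n$) gives $\phi_n(vx)=\frac{1}{n}\phi_n(xv)$, so $\lambda=1/n<1$. For the remaining hypotheses of Corollary~\ref{corol:Dykema's} the paper does not improvise a Haar unitary or an ad hoc $y$; it simply observes that the diagonal subalgebra $D\subset\calO_m$ generated by the $\lambda\lambda^*$ (paths in $E_Y$) is a unital diffuse abelian subalgebra in the centralizer of $\phi_m$, and then refers to \cite[3.9(iii)]{DykemaScan} for the rest (the construction of $y$ and the verification that $q\precsim p$ in the centralizer of $\phi|_{\gotAAA}$). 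Your sketch of these parts is too vague to be a proof, but the essential error is the $\lambda=1$ issue above.
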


\begin{proof}
Set $A :=\calO_n$ and $B :=\calO _m$, where as usual $\mathcal O_k$
denotes the Cuntz algebra, and identify $A=C^*(E_X)$ and
$B=C^*(E_Y)$. Then $(\Cstred(E,C), \phi)$ is the reduced free
product of $(\calO_n,\phi_n)$ and $(\calO _m,\phi _m)$, where we
denote by $\phi _k$ the canonical faithful state on $\calO_k$ (see
\cite[Theorem 2.1]{AG2}). Set $X=\{e_1,\dots ,e_n\}$, $Y=\{
f_1,\dots ,f_m\}$, and $p_i=e_ie_i^*$, $r_j=f_jf_j^*$, for $1\le
i\le n$, $1\le j\le m$. Observe that $\phi_n(p_i)=1/n$ and
$\phi_m(f_j)=1/m$. Set $p=p_1$, $q=e_2e_1e_1^*e_2^*$, $v=
e_2e_1e_1^*$. Then $vv^*= q$ and $v^*v=p$, and moreover $\phi _n
(vx)=\frac{1}{n}\phi _n (xv)$ for all $x\in A$. Clearly $p$ is full
in $\Cstred(E,C)$. Note that the centralizer of $\phi_m$ in $B=\calO
_m$ contains an abelian subalgebra $D$ on which $\phi_m$ is diffuse,
namely the diagonal C*-subalgebra $D$ generated by all the elements
$\lambda \lambda^*$, with $\lambda $ a path in $E_Y$. (The spectrum
of this algebra is a Cantor set.) So the rest of the conditions
needed to apply Corollary \ref{corol:Dykema's} is verified in the
same way as in \cite[3.9(iii)]{DykemaScan}.
\end{proof}



We now recover the setting of the introduction. Recall that an
abelian monoid $M$ is said to be {\it conical} in case, for $x,y\in
M$, $x+y=0$ implies $x=y=0$.
 Let $F$ be the free
abelian monoid on free generators $a_1,a_2,\dots ,a_n$. Let
$$x=\sum _{i=1}^n
r_ia_i,\qquad y =\sum_{i=1}^n s_i a_i $$ be nonzero elements in $F$.
Let $M$ be the abelian conical monoid ${F/\sim}$, where $\sim$ is
the congruence on $F$ generated by $(x,y)$. We shall assume, without
loss of generality, that $r_i+s_i>0$ for $i=1,\dots ,n$. (Otherwise
the C*-algebras we consider will have a finite-dimensional direct
summand.)

Let $(E,C)$ be the separated graph associated to the presentation
$\langle a_1,a_2,\dots ,a_n \mid x=y\rangle$. To be precise $E$ is a
graph with $n+1$ vertices $E^0=\{v,w_1, w_2,\dots ,w_n\}$ and $N+M$
arrows, where $M=\sum _{i=1}^n r_i$ and $N=\sum _{i=1}^n s_i$, with
$v$ being a source and all $w_i$ being sinks. The arrows in $E$ are
labeled as $\alpha ^{(i)}_j$, for $1\le j\le s_i$, $1\le i\le n$;
and $\beta^{(i)}_j$, for $1\le j\le r_i$, $1\le i\le n$. We have
$r(\alpha _j^{(i)})= r(\beta_k^{(i)})=w_i$, and $s(\alpha _j^{(i)})=
s(\beta_k^{(i)})=v$.
 There are
two elements $X,Y$ in $C=C_v$, given by
$$X=\{\alpha ^{(i)}_j\mid 1\le
j\le s_i, 1\le i\le n\},\qquad Y =\{ \beta^{(i)}_j \mid 1\le j\le
r_i, 1\le i\le n\} \, .$$ The C*-algebra $\Cstred (E,C)$ turns out
to be the amalgamated free product
$$(\Cstred (E,C), \Phi )= (A, \Phi_A)*_{\C^{n+1}} (B,\Phi _B)\, ,$$
where $A= \prod _{i=1}^n M_{s_i+1}(\C)$ and $B=\prod_{i=1}^n
M_{r_i+1}(\C)$. We will denote the canonical matrix units in $A$ and
$B$ by $e_{jk}^{(i)}$,  $0\le j,k \le s_i$, $1\le i\le n$, and
$f^{(i)}_{jk}$, $1\le j,k\le r_i$, $1\le i\le n$, respectively. With
this notation we can describe the unital embeddings $\iota _A$ and
$\iota _B$ of $\C^{n+1}$ into $A$ and $B$ by the formulas
$$\iota _A(e_i)=e^{(i)}_{00},\qquad \iota _B(e_i)= f^{(i)}_{00}$$
for $i=1,\dots ,n$, where $e_i$, $i=1,\dots ,n+1$ are the minimal
projections of $\C^{n+1}$. The conditional expectation $\Phi _A$ is
given as follows
$$\Phi _A \Big( \sum _{i=1}^n\sum _{j,k=0}^{s_i} a^{(i)}_{jk}e^{(i)}_{jk} \Big)=
\Big( a^{(1)}_{00}, a^{(2)}_{00},\dots ,a^{(n)}_{00},
\frac{1}{N}\sum _{i=1}^n\sum _{j=1}^{s_i}a_{jj}^{(i)}\Big)\, ,$$
with a similar formula holding for $\Phi _B$.

In several cases we can show that $\gotA$ is simple, using a
generalization of Avitzour's Theorem (\cite[4.3]{AG2}) (see
\cite{avitzour} for the original result on free products).

\begin{lemma}
\label{lem:another-Aznavour} Let $(E,C)$ be the separated graph
associated to the presentation $\langle a_1,\dots ,a_n\mid \sum
_{i=1}^n r_ia_i=\sum _{i=1}^n s_ia_i\rangle $, as described above.
Assume that $M\ge 2$ and $ N\ge 3$. Then the C*-algebra $\Cstred
(E,C)$ is simple.
\end{lemma}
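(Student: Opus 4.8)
The plan is to deduce simplicity from the amalgamated generalization of Avitzour's theorem, \cite[Theorem 4.3]{AG2}, after cutting down by the source vertex. Using the description $(\Cstred(E,C),\Phi)=(A,\Phi_A)*_{\C^{n+1}}(B,\Phi_B)$ recalled above, with $A=\prod_{i=1}^n M_{s_i+1}(\C)$, $B=\prod_{i=1}^n M_{r_i+1}(\C)$ and $\C^{n+1}=C(E^0)$, let $v\in C(E^0)$ be the source. It is a minimal projection of $C(E^0)$, so $vC(E^0)v=\C v$, and it is full in $\Cstred(E,C)$: each sink $w_i$ is Murray--von Neumann equivalent to a subprojection of $v$ --- inside $A$ via $\alpha^{(i)}_1$ when $s_i>0$, and inside $B$ via $\beta^{(i)}_1$ when $s_i=0$ (using $r_i+s_i>0$) --- so the closed ideal generated by $v$ contains every vertex and hence equals $\Cstred(E,C)$.

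First I would identify the corner $v\Cstred(E,C)v$. Since $vC(E^0)v=\C v$, an amalgamated variant of \cite[Proposition 2.8]{DykemaScan} identifies $v\Cstred(E,C)v$ with the ordinary reduced free product $(vAv,\phi_A)*(vBv,\phi_B)$ of the finite-dimensional C*-algebras $vAv\cong\prod_{i\colon s_i>0}M_{s_i}(\C)$ and $vBv\cong\prod_{i\colon r_i>0}M_{r_i}(\C)$, where $\phi_A$ and $\phi_B$ are the restrictions of $\Phi_A$ and $\Phi_B$; these are the faithful states assigning weight $1/N$, respectively $1/M$, to each of the $N$, respectively $M$, minimal projections. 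By fullness of $v$ it suffices to prove that this free product is simple, which is what \cite[Theorem 4.3]{AG2} will give once its hypotheses are met.

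Next I would produce Avitzour's unitaries in the two corner algebras, assigning the weaker (one-unitary) requirement to $B$ and the stronger (two-unitary) one to $A$. On the $B$-side, $M\ge 2$ forces $vBv\ne\C$, and one writes down a unitary $u\in vBv$ with $\phi_B(u)=0$ (a fixed-point-free permutation unitary inside a matrix block, a diagonal unitary whose entries sum to $0$ when $vBv$ is abelian, or a blockwise combination in general). On the $A$-side, $N\ge 3$ ensures that $vAv$ is neither $\C$ nor a copy of $\C^2$ with equal weights, and I would exhibit unitaries $\mu,\nu\in vAv$ with $\phi_A(\mu)=\phi_A(\nu)=\phi_A(\nu^*\mu)=0$: a Pauli-type pair inside a matrix block $M_k(\C)$ with $k\ge 2$ (completed so as to have trace $0$ on the whole block), or, if $vAv\cong\C^N$ with $N\ge 3$, vectors $\mu=(\zeta_i)$, $\nu=(\eta_i)$ of unit-modulus scalars with $\sum_i\zeta_i=\sum_i\eta_i=\sum_i\bar\eta_i\zeta_i=0$, which has a solution exactly because $N\ge 3$; the mixed case is handled by balancing the blockwise contributions to the three traces. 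Plugging $u$, $\mu$, $\nu$ into \cite[Theorem 4.3]{AG2}, together with $vC(E^0)v=\C v$ and the fullness of $v$, gives that $\Cstred(E,C)$ is simple.

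The only real work is this last construction. The identification of the corner, the equality $vC(E^0)v=\C v$, and the fullness of $v$ are all immediate from the combinatorics of the one-relator graph; the delicate point is arranging a unitary pair $\mu,\nu$ in the (possibly decomposable) algebra $vAv$ with $\mu$, $\nu$ and $\nu^*\mu$ simultaneously in $\ker\phi_A$, which is precisely where $N\ge 3$ is indispensable --- for $N=2$ one may be forced into $vAv\cong(\C^2,(1/2,1/2))$, and the corresponding reduced free product (a $C^*_r(\Z/2\Z)$-type situation) need not be simple.
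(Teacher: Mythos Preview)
Your approach is correct and is essentially the paper's: apply the amalgamated Avitzour criterion \cite[Proposition~4.3 and Corollary~4.4]{AG2} with unitaries in the corners $vAv$ and $vBv$, together with fullness of $v$ (proved exactly as you say). The paper streamlines your unitary construction by working entirely inside the diagonal subalgebras $D_A\subset vAv$ and $D_B\subset vBv$: since the induced states give equal weight $1/N$ (resp.\ $1/M$) to every minimal diagonal projection, the problem reduces to finding modulus-one tuples in $\mathbb{T}^N$ and $\mathbb{T}^M$ with the required vanishing sums, which dispenses with the Pauli/block case analysis. The paper also skips the explicit identification of $v\Cstred(E,C)v$ as an ordinary reduced free product, since \cite[4.3]{AG2} is already formulated in the amalgamated setting.
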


\begin{proof}
We shall use \cite[4.3 and 4.4]{AG2}. Consider the projection in
$\Cstred (E,C)$ corresponding to the vertex $v$, which is denoted by
the same symbol. This projection corresponds to $(0,\dots ,0,1)\in
\C^{n+1}$ in the above picture. Observe that
$$vAv\cong \prod_{i=1}^n
M_{s_i}(\C)\quad  \text{and} \quad vBv\cong \prod_{i=1}^n
M_{r_i}(\C),$$ and the canonical conditional expectations induce the
tracial states $\tau_{vAv}$ and $\tau _{vBv}$ on $vAv$ and $vBv$
given by
$$\tau_{vAv} (x_1,x_2,\dots ,x_n )=\frac{1}{M}\sum _{i=1}^n
\text{Tr}_{r_i}(x_i),\qquad \tau_{vBv} (x_1,x_2,\dots ,x_n
)=\frac{1}{N}\sum _{i=1}^n \text{Tr}_{s_i}(x_i)$$ respectively. Let
$D_A$ and $D_B$ denote the canonical maximal commutative subalgebras
of $vAv$ and $vBv$. Then $\text{dim}_{\C}(D_A)=M\ge 2$ and
$\text{dim}_{\C}(D_B)=N\ge 3$. We can thus find unitaries $a$ in
$D_A$, and $b,c$ in $D_B$ such that
$$\tau_{vAv} (a) = 0,\qquad \tau_{vBv} (b)=\tau_{vBv} (c)= \tau_{vBv} (b^*c)= 0 .$$
These unitaries satisfy all the hypothesis required in
\cite[Proposition 4.3]{AG2}.

In order to show simplicity, it remains to observe that $v$ is full
in $\gotA$. To see this, it is enough to show that $w_i\precsim v$
in $\gotA$ for all $i$. Now given $i\in \{1,\dots ,n\}$, either
$r_i\ne 0$ or $s_i\ne 0$ by the hypothesis that $r_i+s_i>0$, so
either $w_i\precsim v$ in $A$ or $w_i\precsim v$ in $B$. In any case
we have $w_i\precsim v$ in $\gotA$, as wanted. We can therefore
conclude from \cite[Corollary 4.4]{AG2} that $\Cstred (E,C)$ is
simple.
\end{proof}

\begin{theorem}
\label{thm:motivating} Let $(E,C)$ be the separated graph associated
to the presentation $\langle a_1,\dots ,a_n\mid \sum _{i=1}^n
r_ia_i=\sum _{i=1}^n s_ia_i\rangle $, as described above, and put
$M=\sum _{i=1}^n r_i$ and $N=\sum _{i=1}^n s_i$. Assume that there
is $i_0\in \{1,\dots ,n\}$ such that  $s_{i_0}\ge 1$ and $r_{i_0}\ge
1$, and that $2\le M<N$. Then the C*-algebra $\Cstred (E,C)$ is
purely infinite simple.
\end{theorem}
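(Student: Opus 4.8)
The plan is to verify the hypotheses of Theorem \ref{thm:Dykema's} for the amalgamated free product $\gotA = \Cstred(E,C) = (A,\Phi_A) *_{\C^{n+1}} (B,\Phi_B)$ with $A = \prod_{i=1}^n M_{s_i+1}(\C)$, $B = \prod_{i=1}^n M_{r_i+1}(\C)$, and $C = \C^{n+1}$. First I would set up the data: take $P = e_{n+1}$, the minimal projection of $C$ corresponding to the source vertex $v$; since $PC = P\C$ this is the required central projection with $PC = P\C$, and let $\gamma$ be any faithful state on $\C^{n+1}$ so that $\phi = \gamma \circ \Phi$ is a faithful state on $\gotA$. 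Using the index $i_0$ with $s_{i_0}, r_{i_0} \ge 1$, I would produce the partial isometry $v$ inside $A$ (a product of matrix units in the block $M_{s_{i_0}+1}(\C)$, moving a sub-corner of $P$ off of $P$) with $v^*v = p$, $vv^* = q$, $p \le P$, $q \le 1-p$, and the scaling relation $\phi(vx) = \lambda\phi(xv)$ for $x \in A$ with $\lambda = 1/N$ (the weight $1/N$ coming from $\Phi_A$). Then $A_0 = pAp + (1-p)A(1-p)$ and $\gotAA = C^*(A_0 \cup B)$, and I would exhibit $y \in \gotAA$ with $yy^* = q$ and $p_1 := y^*y \le p$: here the key point is that $q$ and a sub-projection of $p$ lie in distinct matrix blocks only after passing through $B$, so $y$ is built from matrix units of $B$ (in the block $M_{r_{i_0}+1}(\C)$) together with elements of $A_0$, realizing the Murray--von Neumann equivalence $q \sim p_1 \le p$ that is available because $r_{i_0} \ge 1$; the scaling constant is $\mu = 1/M$, and the crucial inequality $\lambda\mu = 1/(NM) < 1$ holds trivially since $N, M \ge 2$.

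Next I would check the remaining structural hypotheses. Fullness of $p$ in $\gotA$: since $M < N$ one has $M \le N - 1$, so the vertex projection $v$ (equivalently $P$) is full by an argument like that in Lemma \ref{lem:another-Aznavour} (each sink $w_i$ is subordinate to $v$ inside $A$ or $B$, hence $w_i \precsim v$ in $\gotA$), and then $p \le P$ together with $P \precsim$ (a subprojection of) $p$ via the matrix units of $A$ shows $p$ is also full. The diffuse abelian subalgebra of $q\gotAA q$ in the centralizer of $\phi$: exactly as in Proposition \ref{prop:one-vertex} and \cite[3.9(iii)]{DykemaScan}, I would use the fact that $q$ is equivalent (inside $\gotAA$, through $y$) to a projection $p_1 \le p \le P$, conjugate the diagonal Cantor-set subalgebra of some Cuntz-like corner into $q\gotAA q$; concretely $\gotAA$ contains a free-product corner large enough to carry a diffuse diagonal, and the scaling relations force it into the centralizer of $\phi$.

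The main obstacle, and the bulk of the work, will be verifying that each $p_k = w^k(w^*)^k$ is $A$-free in the sense of Definition \ref{def:A-free}, i.e. $E(p_k) \in \C p$ and $E$ kills all alternating words of length $> 1$ in $(pAp)^{\mathrm o}$ and $C^*(p_k, p)^{\mathrm o}$. The strategy mirrors the proof of Corollary \ref{corol:Dykema's}: show by induction on $k$ that $p_k$ lies in a controlled ``alternating'' subspace $S \subseteq p\gotA p$ — roughly $S = p\C + \overline{\sum_{i \ge 0} p B^{\mathrm o} (A^{\mathrm o} B^{\mathrm o})^i p}$, adapted to the amalgamated setting over $pA_0p$ — which forces $E(p_k) \in \C p$ and guarantees orthogonality to $(pAp)^{\mathrm o}$-words. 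The base case $k=1$ is immediate since $p_1 = y^*qy \in p\gotAA p$; for the inductive step one writes $p_{k+1} = (y^*v) p_k (v^*y)$, expands $y^*v$ in the closed span of words $\sum_i pB^{\mathrm o}(A_0^{\mathrm o}B^{\mathrm o})^i v$ (using $v \in A^{\mathrm o}$), and checks that multiplying through $p_k \in S$ produces either an element of $p\gotAA p \subseteq S$ or an element of $pB^{\mathrm o}(A^{\mathrm o}B^{\mathrm o})^l p$ with $l \ge 1$, again in $S$. The delicate point specific to this paper — as opposed to the one-vertex Cuntz-algebra case — is keeping track of the several matrix blocks and the non-trivial amalgam $\C^{n+1}$, so that the conditional expectation $E$ (the composition $E_A^{A_0} \circ E_{\gotA}^A$ of Lemma \ref{lem:compatiblePhi}) behaves as in the scalar case; once $A$-freeness is in hand, Theorem \ref{thm:Dykema's} applies verbatim and yields that $\gotA = \Cstred(E,C)$ is purely infinite and simple.
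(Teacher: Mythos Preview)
Your overall strategy---apply Theorem~\ref{thm:Dykema's}---is the paper's, and your sketch of the $A$-freeness induction is essentially what the paper invokes. But the heart of the argument, namely the construction of $y$, is where you have a genuine gap, and the symptom is visible in your scaling constants.

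First, your choice $P = e_{n+1}$ (the source vertex) is the wrong one. With $p \le P$ a rank-one sub-projection of the source in block $i_0$ and $q$ a projection off $P$ reached by a matrix unit of $A$, the scaling one actually gets is $\lambda = N$ (or $\lambda = 1$ if $q \le P$ as well), never $\lambda = 1/N$. The paper instead takes $P = e_{i_0}$, the \emph{sink} projection $w_{i_0}$, so that $p = e^{(i_0)}_{00} = P$ and $q = e^{(i_0)}_{11}$ sits under the source; then $v = e^{(i_0)}_{10}$ genuinely gives $\lambda = 1/N$.

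Second, and more seriously: ``matrix units of $B$ together with elements of $A_0$'' does not produce a $y$ with $y^*y \le p$. A $B$-matrix unit $f^{(i_0)}_{0k}$ has $(f^{(i_0)}_{0k})^* f^{(i_0)}_{0k} = f^{(i_0)}_{kk}$, which is a rank-one sub-projection of the source vertex with no reason to lie under $p$. The paper's actual construction is to realize the diagonal subalgebra $\gotB = C^*(\{e^{(i)}_{jj}, f^{(i)}_{kk}\}) \subseteq (1-p)\gotAAA(1-p)$ as $C^*_r(\Z_N * \Z_M)$, then use (i) the $K_0$-computation of \cite{DykemaRordamGFA} together with the trace inequality $\phi(q) = \tfrac{1}{(n+1)N} < \tfrac{1}{(n+1)M} = \phi(f^{(i_0)}_{11})$ to get $[q] < [f^{(i_0)}_{11}]$ in $K_0(\gotB)$, and (ii) stable rank one of $C^*_r(\Z_N*\Z_M)$ from \cite{DyHaagRor} to upgrade this to an honest subequivalence $q = zz^*$, $z^*z \le f^{(i_0)}_{11}$ with $z$ in the centralizer of $\phi$. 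Then $y = z f^{(i_0)}_{10} \in \gotAAA$, and the scaling constant comes out to $\mu = M$, not $1/M$. This yields $\lambda\mu = M/N < 1$, which is precisely where the hypothesis $M < N$ enters. Your inequality $\lambda\mu = 1/(NM) < 1$ holds even when $M = N$, yet in that case $\Cstred(E,C)$ carries a faithful trace (Proposition~\ref{prop:balancedcase}) and is certainly not purely infinite---so your pair $(\lambda,\mu)$ cannot be correct.

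Finally, the diffuse abelian subalgebra of $q\gotAA q$ does not come from any Cuntz-like diagonal (the factors $A$, $B$ here are finite-dimensional); it comes from $q\gotB q \cong qC^*_r(\Z_N*\Z_M)q$, via Dykema's explicit description \cite[2.7]{DykemaTAMS} of the two-projection subalgebra generated by $q$ and a minimal projection of $C^*_r(\Z_M)$ as $\C \oplus C([a,b],M_2(\C)) \oplus \C$ with respect to an atomless measure on $[a,b]$.
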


\begin{proof}
Write $\gotA =\Cstred (E,C)$. We have $(\gotA,
\Phi)=(A,\Phi_A)*_{\C^{n+1}} (B, \Phi_B)$, as described above. By
Lemma \ref{lem:another-Aznavour}, $\gotA$ is a simple C*-algebra.
Therefore every nonzero projection in $\gotA$ is full.

\smallskip

By hypothesis, there exists $i_0\in \{1,\dots ,n\}$ such that
$s_{i_0}\ge 1$ and $r_{i_0}\ge 1$. Without loss of generality, we
shall assume that $i_0=1$.

\smallskip

Now consider the faithful state $\gamma $ on $\C^{n+1}$ given by
$$\gamma (x_1,x_2,\dots ,x_{n+1})= \frac{1}{n+1}\sum _{i=1}^{n+1} x_i\, ,$$ and write $\phi
=\gamma \circ \Phi$, which is a faithful state on $\gotA$.

Let $P=(1,0,\dots ,0)\in \C^{n+1}$, and consider the projections
$p=e^{(1)}_{00}$ and $q= e^{(1)}_{11}$ in $A$. Observe that $p=P$ in
$\gotA$. Let $v=e^{(1)}_{10}\in A$, and observe that $v$ is a
partial isometry in $A$ such that $vv^*=q$, $v^*v=p$, and $\phi
(vx)=\lambda \phi (xv)$, where $\lambda = 1/N$.

Set $A_0=pAp+(1-p)A(1-p)\cong \C\times M_{s_1}(\C)\times
\prod_{i=2}^n M_{s_i+1}(\C)$, and set $\gotAA=C^*(A_0\cup B)$. We
also put $A_{00}=\C p+\C q + (1-p-q)A(1-p-q)$, and $\gotAAA=
C^*(A_{00}\cup B)$. Let $\gotB$ be the C*-subalgebra of
$(1-p)\gotAAA (1-p)$ generated by $\{e^{(i)}_{jj}, f^{(i)}_{kk}\mid
1\le j\le s_i, 1\le k\le r_i, i=1,\dots ,n \}$. Note that we have a
natural isomorphism
$$(C_r^*(\Z_N*\Z_M),\tau) \longrightarrow (\gotB , (n+1)\phi)$$
sending the canonical spectral projections in $C_r^*(\Z_N)$ to
$e^{(i)}_{jj}$, $1\le j\le s_i$, $1\le i\le n$, and the canonical
spectral projections in $C_r^*(\Z_M)$ to $f^{(i)}_{kk}$, $1\le k\le
r_i$, $1\le i\le n$. Since
$$\phi (q)= \frac{1}{(n+1)N} <\frac{1}{(n+1)M} =\phi (f^{(1)}_{11}) \, ,$$
we get that $[q]<[f_{11}^{(1)}]$ in $K_0(\gotB)$ by \cite[Theorem
2]{DykemaRordamGFA}. Since $M\ge 2$ and $N\ge 3$, it follows from
\cite[Corollary 3.9]{DyHaagRor} that  $C^*_r(\Z_N*\Z_M)$ has stable
rank one. Therefore we get that $q\precsim f^{(1)}_{11}$ in $\gotB$.
Observe that $\gotB$ is contained in the centralizer of $\phi$, so
$q=zz^*$ and $z^*z\le f^{(1)}_{11}$ for some $z$ in the centralizer
of $\phi$. Now consider $y= zf^{(1)}_{10}\in \gotAAA$. We have
$yy^*=q$ and $y^*y\le f^{(1)}_{00}=e^{(1)}_{00}=p$. Moreover, since
$z$ belongs to the centralizer of $\phi$, we have
$$\phi (y^*x)=\mu \phi (xy^*) \, $$
for all $x\in \gotA$,  where $\mu =M$. Note that $\lambda \mu
=\frac{M}{N}<1$. Set $w=y^*v$ and $p_k=w^k(w^*)^k$ for all $k\ge 1$.
Then, since $y\in \gotAAA$, the same proof as in Corollary
\ref{corol:Dykema's} gives that the projections $p_k$ are $A$-free.

\smallskip

It remains to check that $q\gotAAA q $ contains a unital diffuse
abelian subalgebra contained in the centralizer of $\phi$. For this
it is enough to see that $qC^*_r (\Z_N* \Z_M)q\cong q\gotB q$
contains a unital diffuse abelian subalgebra, where $q$ is
identified with a minimal projection in $C^*_r(\Z_N)$. Let $r$ be a
minimal projection in $C^*_r(\Z_M)$. Let $\gotB '$ be the
C*-subalgebra of $C^*_r (\Z_N * \Z_M)=(C^*_r(\Z_N),
\tau_N)*(C^*_r(\Z_M),\tau_M )$ generated by $q, 1-q, r, 1-r$ (where
$\tau _i$ is the canonical trace on $C^*_r(\Z_i)$). By
\cite[2.7]{DykemaTAMS}, we have
$$(\gotB ', \tau |_{\gotB '} )= \overset{(1-q)\wedge
r}{\underset{\frac{N-M}{NM}}{\C}} \oplus C([a,b], M_2(\C))\oplus
\overset{(1-q)\wedge (1-r)}{\underset{
1-\frac{1}{N}-\frac{1}{M}}{\C}} ,$$ for some $0<a<b<1$. In this
picture $q$ corresponds to the projection $0\oplus
\begin{pmatrix}
0 & 0 \\ 0 & 1
\end{pmatrix}\oplus 0$ and $\tau$ is given by the indicated weights on the projections
$(1-q)\wedge r$ and $(1-q)\wedge (1-r)$, together with an atomless
measure whose support is $[a,b]$. It follows that $q\gotB 'q$
contains a unital diffuse abelian subalgebra, and the same will be
true for $q\gotB q$.
\end{proof}

\begin{remarks}
\label{rem:Cuntzalgsandothers}

\noindent (i) We remark that the above proof, combined with the
proof of Theorem \ref{thm:Dykema's}, gives the well-known
description of $\mathcal O _n $ as a crossed product when applied to
the presentation $\langle a \mid a=na \rangle$.

\noindent (ii) Theorem \ref{thm:motivating} does not hold if the
hypothesis that there is $i_0\in \{1,\dots n\}$ such that
$r_{i_0}>0$ and $s_{i_0}>0$ is suppressed, see Proposition
\ref{prop:capequalzero}.
\end{remarks}

It is worth to state explicitly the following particular case.
Recall that the C*-algebras $\Cstred (E(m,n), C(m,n))$ provide
higher dimensional generalizations of Cuntz algebras. Indeed, we
have $\Cstred (E(1,n), C(1,n))\cong M_2(\mathcal O _n )$ (see
\cite[Example 4.5]{AG2}). The representation $\mathcal O _2\cong
M_2(\mathcal O _2)\cong M_2(\C)*_{\C^2}M_{3}(\C)$ goes back to Choi
(\cite[Theorem 2.6]{Choi}).

\begin{corollary}
\label{cor:Umn} Assume that $1\le m<n$. Then the C*-algebra $\Cstred
(E(m,n), C(m,n))$ is purely infinite simple.
\end{corollary}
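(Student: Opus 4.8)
The plan is to derive Corollary \ref{cor:Umn} directly from Theorem \ref{thm:motivating} by exhibiting $(E(m,n),C(m,n))$ as the one-relator separated graph of a suitable presentation and checking the numerical hypotheses. Recall from the introduction that $(E(m,n),C(m,n))$ is the separated graph attached to the presentation $\langle a \mid ma = na\rangle$; in the notation of Theorem \ref{thm:motivating} this is the case $n=1$, $r_1 = m$, $s_1 = n$, so that $M = \sum_i r_i = m$ and $N = \sum_i s_i = n$.

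First I would dispose of the degenerate case $m = 1$. When $m=1$, the separated graph is an \emph{ordinary} (non-separated) graph on the $Y$-side is trivial, and in fact $\Cstred(E(1,n),C(1,n)) \cong M_2(\calO_n)$ by \cite[Example 4.5]{AG2}; since $\calO_n$ is purely infinite simple for $n \ge 2$, so is $M_2(\calO_n)$, and the claim follows. (Alternatively, Proposition \ref{prop:one-vertex} covers the one-vertex picture after the Morita reduction.)

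For the main range $2 \le m < n$, I would simply verify that the hypotheses of Theorem \ref{thm:motivating} hold. We have $n=1$ in the role of the number of generators, $i_0 = 1$ is the unique index, and $s_{i_0} = s_1 = n \ge 1$, $r_{i_0} = r_1 = m \ge 1$, so the "crossing" condition $s_{i_0} \ge 1$ and $r_{i_0} \ge 1$ is satisfied; moreover $2 \le M = m < n = N$ by assumption. Theorem \ref{thm:motivating} then yields that $\Cstred(E(m,n),C(m,n))$ is purely infinite simple, completing the proof.

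The only genuine subtlety — really a bookkeeping point rather than an obstacle — is making sure the translation between the two indexings is correct, i.e., that in the presentation $\langle a \mid ma = na\rangle$ the coefficient $m$ of $a$ on the left corresponds to $r_1$ (the $Y$-edges, with $|Y| = m$) and the coefficient $n$ on the right corresponds to $s_1$ (the $X$-edges, with $|X| = n$), matching the conventions fixed in items (1)–(4) of the one-relator construction and in \cite[Example 4.5]{AG2}. Once that identification is in place, everything is a direct citation of Theorem \ref{thm:motivating}.
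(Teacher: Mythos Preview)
Your argument is correct and mirrors the paper's approach exactly: the corollary is stated there as an immediate particular case of Theorem~\ref{thm:motivating} (with $n=1$ generator, $r_1=m$, $s_1=n$, so $2\le M=m<N=n$ and $i_0=1$), while the boundary case $m=1$ is handled via the isomorphism $\Cstred(E(1,n),C(1,n))\cong M_2(\mathcal O_n)$ from \cite[Example~4.5]{AG2}. One small slip in your aside: Proposition~\ref{prop:one-vertex} requires \emph{both} $m,n>1$, so it cannot serve as an alternative for the $m=1$ case; just drop that parenthetical.
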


\section{The finite case}
\label{sect:finite-case}

We will use the following well-known result for the existence of
tracial states on an amalgamated free product.

\begin{lemma}
\label{lem:tracesongotA} Let $(\gotA,\Phi)= (A,\Phi_A)*_C(B,\Phi
_B)$ be an amalgamated free product with respect to faithful
conditional expectations $\Phi_A$ and $\Phi _B$. Then there is a
faithful tracial state on $\gotA$ if and only if there is a faithful
state $\tau$ on $C$ such that both $\tau \circ \Phi _A$ and $\tau
\circ \Phi _B$ are tracial states on $A$ and $B$ respectively. In
this case the tracial state on $\gotA$ is defined by $\theta= \tau
\circ \Phi$.
\end{lemma}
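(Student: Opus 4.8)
The plan is to prove both implications, with the forward direction being essentially a restriction argument and the reverse direction requiring a computation on alternating words. First I would dispose of the easy direction: suppose $\theta$ is a faithful tracial state on $\gotA$. Since $\sigma_A\colon A\to\gotA$ and $\sigma_B\colon B\to\gotA$ are unital embeddings (and we identify $C$ with its image), the restriction $\tau := \theta|_C$ is a state on $C$; it is faithful because $\theta$ is faithful and $C$ sits inside $\gotA$ as a C*-subalgebra. For $a,a'\in A$ we have $\theta(aa')=\theta(a'a)$ by traciality of $\theta$, so $\theta|_A$ is a tracial state on $A$. But by Definition \ref{def:redamprod}(3), $\Phi\circ\sigma_A=\Phi_A$, and composing with $\gamma$... more precisely $\theta|_A = \gamma'\circ\Phi_A$ where one must check $\theta|_A$ factors through $\Phi_A$. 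The cleaner route: from $\Phi\circ\sigma_A=\Phi_A$ we get that for $x\in A$, $\tau(\Phi_A(x)) = \theta(\sigma_A(\Phi_A(x)))$; and since $\theta\circ\sigma_A$ is a trace on $A$ that restricts on $C$ to $\tau=\theta|_C$ and kills $\ker\Phi_A$ (because $\theta\circ E = \theta$ by a Lemma \ref{lem:compatiblePhi}-type identity, or directly since any trace on $A$ vanishes on $A^{\mathrm o}$ once we know it does — this needs the structure of $A$), we conclude $\theta|_A = \tau\circ\Phi_A$, hence $\tau\circ\Phi_A$ is a tracial state; symmetrically for $B$.

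For the converse, suppose $\tau$ is a faithful state on $C$ with $\tau\circ\Phi_A$ and $\tau\circ\Phi_B$ tracial. Define $\theta := \tau\circ\Phi$ on $\gotA$. It is a faithful state: $\Phi$ is faithful by \cite[Theorem 2.1]{ivanov} (the hypotheses being faithful conditional expectations), and $\tau$ is faithful on $C$, so $\theta$ is faithful. It remains to show $\theta$ is tracial, i.e.\ $\theta(xy)=\theta(yx)$ for all $x,y\in\gotA$. By density and linearity it suffices to check this on a spanning set, namely products of the form $c\cdot a_1a_2\cdots a_n$ with $c\in C$ and $a_j\in A^{\mathrm o}_{i_j}\cup B^{\mathrm o}_{i_j}$ alternating (using $C + \overline{\mathrm{span}}\,\Lambda^{\mathrm o}(A^{\mathrm o},B^{\mathrm o})$ is dense in $\gotA$). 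The heart of the matter is the cyclic-rotation identity: for an alternating word $\xi=a_1\cdots a_n$ with $n\ge 2$, $\Phi(\xi)=0$ by Definition \ref{def:redamprod}(4), so $\theta(\xi)=0$; and $\theta(\xi)$ is invariant under cyclically permuting the letters because each cyclic rotation of a nontrivial alternating word is again a word each of whose factors lies in some $A^{\mathrm o}$ or $B^{\mathrm o}$ — one may need to re-associate adjacent factors from the same algebra, using that $\tau\circ\Phi_A$ and $\tau\circ\Phi_B$ are traces to handle the "collision" terms $a_na_1$ when $i_n=i_1$. This last point — controlling what happens when rotating causes two letters from the same algebra to become adjacent, which forces a decomposition into a scalar part and a new kernel part — is the main obstacle, and it is exactly where the traciality hypotheses on $\tau\circ\Phi_A$ and $\tau\circ\Phi_B$ (rather than merely on $\tau$) get used.

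Concretely, to show $\theta(x a_1\cdots a_n) = \theta(a_1\cdots a_n x)$ for $x\in A^{\mathrm o}_{i_0}$, say, one reduces by an induction on $n$: if $i_0\ne i_1$ and $i_0\ne i_n$ then $xa_1\cdots a_n$ and $a_1\cdots a_n x$ are both alternating of length $n+1\ge 2$, so both have $\theta$ equal to $0$; if $i_0=i_1$, write $xa_1 = \Phi_A(xa_1) + (xa_1 - \Phi_A(xa_1))$, split the product accordingly, and apply the inductive hypothesis together with the trace property of $\tau\circ\Phi_A$ to the scalar-coefficient term. The base cases $n=0,1$ are immediate from the definition of $\theta$ and the trace hypotheses. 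Finally, assembling: traciality extends from the spanning set to all of $\gotA$ by continuity of $\theta$, and the displayed formula $\theta=\tau\circ\Phi$ is exactly how $\theta$ was constructed, completing the proof.
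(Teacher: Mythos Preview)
The paper does not actually prove this lemma; it is introduced with the phrase ``the following well-known result'' and stated without proof. So there is no paper argument to compare against, and the only question is whether your plan is sound on its own merits.

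Your reverse direction (given $\tau$, set $\theta=\tau\circ\Phi$ and show it is a faithful tracial state) is correct and is the standard argument. Faithfulness follows from faithfulness of $\Phi$ (Ivanov) together with faithfulness of $\tau$; traciality reduces by density to alternating words, and your induction handling the ``collision'' $i_0=i_1$ via the decomposition $xa_1=\Phi_A(xa_1)+(xa_1-\Phi_A(xa_1))$ and the trace property of $\tau\circ\Phi_A$ is exactly the right mechanism. This is also the only direction the paper ever uses (in Propositions~\ref{prop:balancedcase} and~\ref{prop:capequalzero}).

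Your forward direction, however, has a real gap that you yourself flag. You want $\theta|_A=\tau\circ\Phi_A$ (with $\tau=\theta|_C$), which amounts to $\theta$ vanishing on $A^{\mathrm o}=\ker\Phi_A$. Neither the fact that $\theta$ is a trace on $\gotA$ nor the relation $\Phi\circ\sigma_A=\Phi_A$ forces this: a trace on $\gotA$ need not factor through $\Phi$, and its restriction to $A$ need not coincide with $\tau\circ\Phi_A$. Your parenthetical ``this needs the structure of $A$'' is an honest admission, but it is not a proof step; without further hypotheses (or a different choice of $\tau$ not simply equal to $\theta|_C$) the argument as written does not close. Since the paper treats the lemma as folklore and only invokes the reverse implication, this gap is harmless for the paper's purposes, but you should be aware that the ``only if'' direction requires more than a straightforward restriction.
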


We now show the existence of a faithful tracial state in the
balanced case, as follows:

\begin{proposition}
\label{prop:balancedcase} Let $(E,C)$ be the separated graph
associated to the presentation $\langle a_1,\dots ,a_n\mid \sum
_{i=1}^n r_ia_i=\sum _{i=1}^n s_ia_i\rangle $, and put $M=\sum
_{i=1}^n r_i$ and $N=\sum _{i=1}^n s_i$. Assume that $N=M$. Then the
C*-algebra $\Cstred (E,C)$ has a faithful tracial state. In
particular it follows that $\Cstred (E,C)$  is stably finite.
Moreover, if in addition $N=M>2$, then $\Cstred (E,C)$ is simple and
has a unique tracial state.
\end{proposition}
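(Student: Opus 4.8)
The plan is to prove the two assertions separately: the faithful tracial state is produced directly from Lemma~\ref{lem:tracesongotA}, while simplicity and uniqueness of the trace (when $N=M>2$) come from Lemma~\ref{lem:another-Aznavour} together with an analysis of the corner $v\gotA v$, where $v$ denotes the vertex projection and $\gotA:=\Cstred(E,C)=(A,\Phi_A)*_{\C^{n+1}}(B,\Phi_B)$ as described above.

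\emph{Faithful tracial state.} I would take the faithful state $\tau$ on $\C^{n+1}$ given by $\tau(x_1,\dots,x_{n+1})=\frac{1}{n+N}\big(\sum_{i=1}^{n}x_i+N\,x_{n+1}\big)$. Using the explicit formula for $\Phi_A$ recorded before Lemma~\ref{lem:another-Aznavour}, one checks that $\tau\circ\Phi_A$ sends $\sum_{i,j,k}a^{(i)}_{jk}e^{(i)}_{jk}$ to $\frac{1}{n+N}\sum_{i}\sum_{j=0}^{s_i}a^{(i)}_{jj}$, i.e.\ it is $\tfrac{1}{n+N}$ times the sum of the unnormalized traces on the blocks of $A=\prod_iM_{s_i+1}(\C)$, hence a (positive, unital) tracial state. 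The identical computation for $B$ puts the coefficient $\frac{1}{n+N}$ on the entries $a^{(i)}_{00}$ and the coefficient $\frac{N}{M}\cdot\frac{1}{n+N}$ on the remaining diagonal entries, so $\tau\circ\Phi_B$ is a tracial state (and is unital) \emph{precisely} because $N=M$. Lemma~\ref{lem:tracesongotA} then yields a faithful tracial state $\theta=\tau\circ\Phi$ on $\gotA$, and a unital C*-algebra with a faithful tracial state is stably finite (a faithful trace on each $M_k(\gotA)$ precludes a non-unitary isometry).

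\emph{Simplicity and the corner.} Assume now $N=M>2$, so $M\ge 2$ and $N\ge 3$ and Lemma~\ref{lem:another-Aznavour} gives that $\gotA$ is simple; consequently every tracial state on $\gotA$ is faithful. Since $v$ corresponds to a minimal projection of $\C^{n+1}$ we have $v\,\C^{n+1}\,v=\C v$, so (a slight extension of) \cite[Proposition~2.8]{DykemaScan} identifies the compression $(v\gotA v,\Phi|_{v\gotA v})$ with the reduced free product $(vAv,\tau_{vAv})*(vBv,\tau_{vBv})$, where $vAv\cong\prod_iM_{s_i}(\C)$ and $vBv\cong\prod_iM_{r_i}(\C)$ carry the tracial states induced by $\Phi$; in particular its canonical state $\rho$ is a trace with $\rho|_{vAv}=\tau_{vAv}$ and $\rho|_{vBv}=\tau_{vBv}$. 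As in the proof of Lemma~\ref{lem:another-Aznavour}, the diagonal subalgebras satisfy $\dim D_A=N\ge 3$ and $\dim D_B=M\ge 3$, so I can choose unitaries $a\in D_A$ and $b,c\in D_B$ with $\tau_{vAv}(a)=\tau_{vBv}(b)=\tau_{vBv}(c)=\tau_{vBv}(b^*c)=0$. Avitzour's theorem (\cite{avitzour}; see also \cite[Proposition~4.3]{AG2}) then shows that $\rho$ is the \emph{unique} tracial state on $v\gotA v$.

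\emph{Uniqueness on $\gotA$.} Let $\theta'$ be any tracial state on $\gotA$. It is faithful, so $\theta'(v)>0$ and $\theta'(v)^{-1}\theta'|_{v\gotA v}$, being a tracial state on $v\gotA v$, must equal $\rho$. For $x,y\in\gotA$, traciality together with $v=v^2$ gives $\theta'(xvy)=\theta'\big((xv)(vy)\big)=\theta'\big((vy)(xv)\big)=\theta'\big(v(yx)v\big)=\theta'(v)\,\rho\big(v(yx)v\big)$; since $v$ is full in $\gotA$ (established in the proof of Lemma~\ref{lem:another-Aznavour}), the elements $xvy$ span a dense subspace, so $\theta'$ is completely determined by the single scalar $\theta'(v)$ together with $\rho$. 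To pin it down: for each $i$, as $s_i+r_i>0$, the sink projection $w_i$ is equivalent in $\gotA$ — via a matrix unit lying in $A$ or in $B$ — to a rank-one diagonal projection of $vAv$ or of $vBv$, which is $\le v$; hence $\theta'(w_i)=\theta'(v)\,\rho(\text{that projection})=\theta'(v)/N$ (using $N=M$ once more), so $1=\theta'(v)+\sum_{i=1}^{n}\theta'(w_i)=\theta'(v)\,\frac{N+n}{N}$, forcing $\theta'(v)=N/(N+n)$. Therefore $\theta'$ is uniquely determined, i.e.\ $\theta'=\theta$, and $\gotA$ is simple with a unique tracial state. The one genuinely delicate point I expect is the middle step — realizing $v\gotA v$ as an honest reduced free product of the matrix corners, with the right induced states, and then verifying the Avitzour hypotheses there; once that is in place, the passage back to $\gotA$ through the fullness of $v$ is purely formal.
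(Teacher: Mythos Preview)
Your argument is correct. The construction of the faithful tracial state via the state $\tau$ on $\C^{n+1}$ and Lemma~\ref{lem:tracesongotA} is exactly the paper's proof; note that the paper's proof of the Proposition itself stops there and defers the ``Moreover'' clause to the proof of Theorem~\ref{thm:main}.

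For uniqueness of the trace, both you and the paper ultimately rely on the Avitzour unitaries found in Lemma~\ref{lem:another-Aznavour}, but the packaging differs. The paper appeals directly to \cite[Proposition~4.3]{AG2}, which gives $\Phi(x)\in\overline{\mathrm{co}}\{u^*xu : u\text{ unitary in }v\gotA v\}$ for $x\in v\gotA v$; since traces are invariant under unitary conjugation and $\Phi(v\gotA v)\subseteq\C v$, this yields $\phi_1(x)=\phi_1(\Phi(x))=\phi_2(\Phi(x))=\phi_2(x)$ for \emph{all} $x\in v\gotA v$ at once, and fullness of $v$ finishes. You instead first identify $v\gotA v$ with the reduced free product $(vAv,\tau_{vAv})*(vBv,\tau_{vBv})$ and invoke Avitzour's theorem there to obtain a unique \emph{normalized} tracial state $\rho$ on the corner, which forces you to determine $\theta'(v)$ separately via the equivalences $w_i\sim(\text{rank-one diagonal projection}\le v)$. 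This works, but it is the longer way around: the paper's averaging formulation pins down $\phi_1|_{v\gotA v}=\phi_2|_{v\gotA v}$ as linear functionals (not merely up to scaling), so the value at $v$ comes for free and your final paragraph becomes unnecessary. The identification of the corner as a genuine reduced free product via (the extension of) \cite[Proposition~2.8]{DykemaScan} is correct but not needed for the argument.
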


\begin{proof} We put $\gotA = \Cstred (E,C)$ and use the notation introduced in Section
\ref{sect:pur-infinite}. Define the faithful state $\tau $ on $
\C^{n+1}$ by $$\tau (a_1,a_2,\dots ,a_{n+1})= \frac{1}{N+n}\sum
_{i=1}^n a_i+ \frac{N}{N+n} a_{n+1}.$$ Since $N=M$, it is easily
checked that $\tau \circ \Phi_A$ and $\tau \circ \Phi_B$ are tracial
states of $A$ and $B$ respectively. So $\theta = \tau \circ \Phi$ is
a faithful tracial state on $\gotA$ by Lemma \ref{lem:tracesongotA}.
\end{proof}

We now study the case where $\{i\in \{1,\dots ,n\}\mid r_i\ne
0\}\bigcap \{i\in \{1,\dots ,n\}\mid s_i\ne 0\}=\emptyset$. This
case corresponds (by a Morita-equivalence) to an ordinary free
product of finite-dimensional C*-algebras, with respect to faithful
tracial states.

\begin{proposition}
\label{prop:capequalzero} Assume that $\{i\in \{1,\dots ,n\}\mid
s_i\ne 0\}\bigcap \{i\in \{1,\dots ,n\}\mid r_i\ne 0\}=\emptyset$.
Then $\Cstred (E,C)$ admits a faithful tracial state. Moreover
$\Cstred (E,C)$ is simple if $\, 2\le M<N$.
\end{proposition}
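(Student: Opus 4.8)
The plan is to prove the two assertions separately: the faithful tracial state comes from Lemma~\ref{lem:tracesongotA} via an explicit choice of weights on $\C^{n+1}$, and the simplicity statement is an immediate consequence of Lemma~\ref{lem:another-Aznavour}.

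For the trace, recall that the standing convention $r_i+s_i>0$ together with the disjointness hypothesis means that $\{1,\dots,n\}$ is the disjoint union $I_s\sqcup I_r$, where $I_s=\{i\mid s_i\neq 0\}$ and $I_r=\{i\mid r_i\neq 0\}$. Keeping the notation of Section~\ref{sect:pur-infinite}, I will produce a faithful state $\tau=(t_1,\dots,t_{n+1})$ on $\C^{n+1}$ (so all $t_i>0$ and $\sum_i t_i=1$) such that $\tau\circ\Phi_A$ and $\tau\circ\Phi_B$ are tracial; Lemma~\ref{lem:tracesongotA} then gives the faithful tracial state $\theta=\tau\circ\Phi$ on $\Cstred(E,C)$, and hence stable finiteness. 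Reading off the formula for $\Phi_A$, the state $\tau\circ\Phi_A$ assigns, inside the $i$-th block $M_{s_i+1}(\C)$ of $A$, weight $t_i$ to $e^{(i)}_{00}$ and weight $t_{n+1}/N$ to each $e^{(i)}_{jj}$ with $1\le j\le s_i$; hence its restriction to that block is a trace precisely when $i\notin I_s$ (the block is then one-dimensional) or $t_i=t_{n+1}/N$. Symmetrically, $\tau\circ\Phi_B$ is tracial precisely when $t_i=t_{n+1}/M$ for every $i\in I_r$. So I set $t_i:=t_{n+1}/N$ for $i\in I_s$, $t_i:=t_{n+1}/M$ for $i\in I_r$, and $t_{n+1}:=\bigl(1+|I_s|/N+|I_r|/M\bigr)^{-1}$, which is positive and forces $\sum_i t_i=1$; all $t_i$ being strictly positive, $\tau$ is faithful and does the job.

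For the ``moreover'' clause, assume $2\le M<N$. Then $M\ge 2$ and $N\ge M+1\ge 3$, so the hypotheses of Lemma~\ref{lem:another-Aznavour} are met and $\Cstred(E,C)$ is simple; this also fits the Morita-equivalence picture noted before the statement, since the corner by the vertex projection is the unital free product $\bigl(\prod_{i\in I_s}M_{s_i}(\C)\bigr)*\bigl(\prod_{i\in I_r}M_{r_i}(\C)\bigr)$ taken with respect to faithful tracial states. I do not expect a genuine obstacle: the only delicate point is the consistency of the weight choice, and this is exactly where disjointness is needed --- an index $i$ with both $s_i\neq 0$ and $r_i\neq 0$ would force $t_{n+1}/N=t_i=t_{n+1}/M$, i.e.\ $M=N$, which is the balanced case treated separately in Proposition~\ref{prop:balancedcase}. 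In combination with Theorem~\ref{thm:motivating}, this proposition shows that the overlap hypothesis there is indispensable, as recorded in Remark~\ref{rem:Cuntzalgsandothers}(ii).
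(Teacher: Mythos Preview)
Your proof is correct and follows essentially the same approach as the paper: the weights you derive satisfy $t_i=M/K$ for $i\in I_s$, $t_i=N/K$ for $i\in I_r$, and $t_{n+1}=NM/K$ with $K=|I_s|\,M+|I_r|\,N+NM$, which is exactly the state $\tau$ the paper writes down and verifies. Your derivation of these weights from the trace condition is slightly more explanatory than the paper's direct verification, and your handling of the simplicity clause via Lemma~\ref{lem:another-Aznavour} matches what the paper intends (though the paper's printed proof actually omits this sentence).
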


\begin{proof}
Set $I_1=\{i\in \{1,\dots , n\}\mid s_i>0\}$ and $I_2=\{ i\in
\{1,\dots ,n\}\mid r_i >0\}$. Then, by hypothesis $\{1,\dots ,n\}$
is the disjoint union of $I_1$ and $I_2$. Set $n_i:=|I_i|$ for
$i=1,2$, and $K:= n_1M+n_2N+NM$, where as usual $N=\sum _{i=1}^n
s_i$ and $M=\sum _{i=1}^n r_i$. Define a faithful state $\tau$ on
$\C^{n+1}$ by
$$\tau (a_1,\dots ,a_n, a_{n+1})= \frac{1}{K} (M \sum _{i\in I_1} a_i+N\sum _{i\in I_2} a_i+
NMa_{n+1}).$$ Then $\tau \circ \Phi_A$ and $\tau \circ \Phi_B$ are
tracial states on $A$ and $B$ respectively. We check this for
$\tau\circ \Phi_A$:
\begin{align*}
(\tau \circ \Phi_A)\Big( \sum _{i=1}^n \sum _{j,k=0}^{s_i}
a^{(i)}_{jk}e^{(i)}_{jk} \Big) & = \tau \Big( a_{00}^{(1)},\dots
,a_{00}^{(n)},\frac{1}{N}\sum _{i\in I_1}\sum _{j=1}^{s_i}
a_{jj}^{(i)} \Big)\\
& = \frac{1}{K} \Big( M\sum _{i\in I_1} a_{00}^{(i)} +N\sum _{i\in
I_2}
a_{00}^{(i)}+ \frac{NM}{N} \sum _{i\in I_1}\sum _{j=1}^{s_i} a_{jj}^{(i)}\Big) \\
& = \frac{M}{K} \sum _{i\in I_1} \sum _{j=0}^{s_i} a_{jj}^{(i)} +
\frac{N}{K} \sum _{i\in I_2} a_{00}^{(i)}\, ,
\end{align*}
which is a trace on $A$. Similarly $\tau \circ \Phi_B$ is a trace on
$B$. By Lemma \ref{lem:tracesongotA}, it follows that $\tau \circ
\Phi$ is a faithful trace on $\Cstred (E,C)$.
\end{proof}

We can now provide the proof of theorem \ref{thm:main}:

\medskip

\noindent {\it Proof of Theorem \ref{thm:main}} Assume that $2\le
M\le N$. Set $I_1=\{i\in \{1,\dots , n\}\mid s_i>0\}$ and $I_2=\{
i\in \{1,\dots ,n\}\mid r_i >0\}$.  If $M<N$ and $I_1\cap I_2\ne
\emptyset$, then  $\Cstred (E,C)$ is purely infinite simple by
Theorem \ref{thm:motivating}. If $M<N$ and $I_1\cap I_2=\emptyset$,
then $\Cstred (E,C)$ admits a faithful tracial state by Proposition
\ref{prop:capequalzero}. If $N=M$, then $\Cstred (E,C)$ admits a
faithful tracial state by Proposition \ref{prop:balancedcase}.

\medskip

Assume now that $2\le M\le N$, $\, N+M\ge 5$ and $\Cstred (E,C)$ is
finite. Then $\gotA$ is simple by Lemma \ref{lem:another-Aznavour}
and by the dichotomy showed before, there exists a tracial state on
$\gotA$. If $\phi_1$ and $\phi_2$ are two tracial states on $\gotA
:= \Cstred (E,C)$, then since, by \cite[Proposition 4.3]{AG2}
$$\Phi (x)\in {\rm \overline{co}} \{ u^*xu: u \text{ unitary in } v\gotA v\}$$
for all $x\in v\gotA v$, we get that $\phi_1$ and $\phi_2$ agree on
$v\gotA v$. Since $v$ is a full projection in $\gotA$, we obtain
that $\phi_1=\phi_2$. Thus there is exactly one tracial state on
$\gotA$, as desired. \qed

\medskip

In order to have a complete description of the graph C*-algebras
$\Cstred (E,C)$ in the one-relator case, it remains to study the
cases where $M=1$, and also the cases where $N=M=2$ in terms of
simplicity and uniqueness of the trace. All cases are easy to
analyze, except two. We first collect the easy cases in a lemma.

\begin{lemma}
\label{lem:easy-cases} Let $(E,C)$ be the separated graph associated
to the presentation $\langle a_1,\dots ,a_n\mid \sum _{i=1}^n
r_ia_i=\sum _{i=1}^n s_ia_i\rangle $, and put $M=\sum _{i=1}^n r_i$
and $N=\sum _{i=1}^n s_i$.
\begin{enumerate}
\item If $M=1$, then we have $\Cstred (E,C)\cong M_2(C^*(F))$, where $F$
is the graph obtained from $E$ by collapsing $v$ and $r(\beta_1)$
and eliminating the arrow $\beta _1$.
\item If $M=N=2$, then there is a faithful tracial state on $\Cstred (E,C)$,
and there are several cases:
\begin{enumerate}
\item  If $n=4$, then $\Cstred (E,C) $ is Morita-equivalent to
$C^*_r(\Z_2*\Z_2)\cong C^*(\Z_2*\Z_2)$, and so it is non-simple.
\item If $n=3$, and either $r(\alpha_1)=r(\alpha _2)$ or $r(\beta_1)=r(\beta_2)$, then
$\Cstred (E,C)$ is Morita-equivalent to $(M_2(\C), {\rm
Tr}_2)*\Big(\underset{1/2}{\C} \oplus \underset{1/2}{\C}\Big)$. It
is simple with a unique trace.
\item If $n=2$ and $r(\alpha_1)=r(\alpha_2)\ne
r(\beta_1)=r(\beta_2)$, then $\Cstred (E,C)$ is Morita-equivalent to
$(M_2(\C), {\rm Tr}_2)*(M_2(\C), {\rm Tr}_2)$. It is simple with a
unique trace.
\item If $n=2$ and $r(\alpha _1)\ne r(\alpha_2) $ and
$r(\beta_1)=r(\beta_2)$, then $\Cstred (E,C)$ is also simple with a
unique trace.
\item If $n=1$, then $\Cstred (E,C)$ is simple with a unique trace.
\end{enumerate}
\end{enumerate}

\end{lemma}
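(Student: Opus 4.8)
The plan is to handle part (1) as a purely structural \emph{collapsing} argument and part (2) by supplying a faithful trace from Proposition~\ref{prop:balancedcase} and then reducing each subcase, via a full projection, to a reduced free product of two small finite-dimensional C*-algebras. For part (1), when $M=1$ there is exactly one $i_0$ with $r_{i_0}=1$ and $r_i=0$ otherwise, so $Y=\{\beta_1\}$ is the single edge $v\to w_{i_0}$ and $B=C^*(E_Y)\cong M_2(\C)\times\C^{n-1}$, the $M_2(\C)$-summand carrying the partial isometry $\beta_1$ with $\beta_1\beta_1^*=v$ and $\beta_1^*\beta_1=w_{i_0}$. First I would \emph{collapse $\beta_1$}: set $\gamma_j:=\alpha^{(i_0)}_j\beta_1^*$ for $1\le j\le s_{i_0}$, and note that since $\beta_1^*\beta_1=w_{i_0}=(\alpha^{(i_0)}_j)^*\alpha^{(i_0)}_j$ the $\gamma_j$ are partial isometries with $\gamma_j^*\gamma_j=v$ whose ranges, together with the $\alpha^{(i)}_j(\alpha^{(i)}_j)^*$ for $i\ne i_0$, are mutually orthogonal and sum to $v$ (here the SCK1 relations inside the class $X$, which hold in $A$, are what make this work). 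Hence $\{v,\ w_i\ (i\ne i_0),\ \gamma_j,\ \alpha^{(i)}_j\ (i\ne i_0)\}$ satisfy the Cuntz--Krieger relations of $F$, so there is a $*$-homomorphism $C^*(F)\to\Cstred(E,C)$ which is injective by the Cuntz--Krieger uniqueness theorem ($F$ is acyclic, so $C^*(F)$ is finite dimensional, or every cycle of $F$ has an entry); call its image $\gotA_0\cong C^*(F)$. Since $\alpha^{(i_0)}_j=\gamma_j\beta_1$ and $w_{i_0}=\beta_1^*\beta_1$, one gets $\Cstred(E,C)=C^*(\gotA_0\cup\{\beta_1\})$, where $\beta_1$ is a partial isometry with $\beta_1\beta_1^*=v$ a full projection of $\gotA_0$ and $\beta_1^*\beta_1\perp 1_{\gotA_0}$; then $\gotA_0\ni a\mapsto a\otimes e_{11}$, $\beta_1\mapsto v\otimes e_{12}$, $w_{i_0}\mapsto v\otimes e_{22}$ identifies $\Cstred(E,C)$ with the full corner $P\,M_2(C^*(F))\,P$, $P=1\otimes e_{11}+v\otimes e_{22}$, and since $v$ dominates a copy of $1_{C^*(F)}$ whenever $F$ has a cycle we get $P\sim 1$ and $\Cstred(E,C)\cong M_2(C^*(F))$ (the remaining finite-dimensional case being checked directly on the two sides).

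For part (2), the existence of a faithful tracial state is immediate from Proposition~\ref{prop:balancedcase} since $M=N=2$; in particular $\Cstred(E,C)$ is stably finite, hence never purely infinite here. For simplicity and uniqueness of the trace I would follow the argument of Lemma~\ref{lem:another-Aznavour}: each sink $w_i$ has $r_i+s_i>0$, so $w_i\precsim v$ in $A$ or in $B$, hence $v$ is a full projection of $\gotA:=\Cstred(E,C)$ and $\gotA$ is Morita equivalent to $v\gotA v=(vAv,\tau_{vAv})*_{\C}(vBv,\tau_{vBv})$, a reduced free product of two finite-dimensional C*-algebras with respect to the canonical traces. A short enumeration of the solutions of $\sum r_i=\sum s_i=2$ with all $r_i+s_i>0$ shows that, for the configurations in (a)--(e) and up to relabelling and interchanging $X$ and $Y$, the pair $(vAv,vBv)$ is $(\C^2,\C^2)$ in case (a), $(\C^2,M_2(\C))$ in cases (b) and (d), and $(M_2(\C),M_2(\C))$ in cases (c) and (e), where the trace on a $\C^2$-factor is $(\tfrac12,\tfrac12)$ and on an $M_2(\C)$-factor is $\tfrac12\Tr_2$.

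When at least one factor is $M_2(\C)$, i.e.\ in cases (b)--(e), feeding a trace-zero unitary of the other factor together with the two trace-zero unitaries $\mathrm{diag}(1,-1)$ and the flip in $M_2(\C)$ (whose product again has trace zero) into the Avitzour-type criterion \cite[Proposition~4.3]{AG2} shows that $v\gotA v$, hence $\gotA$, is simple with a unique tracial state, and reading off $v\gotA v$ yields the stated Morita models. In case (a) both factors are $\C^2$, so $v\gotA v\cong C^*_r(\Z_2)*C^*_r(\Z_2)=C^*_r(\Z_2*\Z_2)\cong C^*(\Z_2*\Z_2)$, which is not simple since the infinite dihedral group is amenable and infinite; thus $\Cstred(E,C)$ is non-simple and Morita equivalent to $C^*(\Z_2*\Z_2)$. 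The two combinatorial types with $(vAv,vBv)=(\C^2,\C^2)$ and $n\le 3$ are exactly the configurations omitted from the lemma and are the ones analysed separately in the text.

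I expect the main obstacle to be the last step of part (1): upgrading the identification $\Cstred(E,C)\cong P\,M_2(C^*(F))\,P$ to the honest isomorphism $\Cstred(E,C)\cong M_2(C^*(F))$ requires recognising that $v$ is Murray--von Neumann equivalent to a projection of $C^*(F)$ that dominates a full unit, which depends on the concrete structure of $C^*(F)$ and is genuinely case-sensitive. In part (2) the only point needing care is that Avitzour's hypotheses really do fail for $\C^2*\C^2$ — any two trace-zero unitaries of $\C^2$ multiply to $1$ — which is precisely why case (a) is the exceptional, non-simple one.
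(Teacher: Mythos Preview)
Your approach is essentially the same as the paper's, only far more explicit. For part (2) you do exactly what the paper does: invoke Proposition~\ref{prop:balancedcase} for the faithful trace, cut down to the full corner $v\gotA v=(vAv,\tau)*(vBv,\tau)$, enumerate the factor pairs, and apply the Avitzour-type criterion \cite[4.3--4.4]{AG2} whenever one factor is $M_2(\C)$. Your explicit choice of trace-zero unitaries and your identification of the two exceptional $(\C^2,\C^2)$ configurations with the separately treated graphs $(E,C)$ and $(F,D)$ are correct and add detail the paper omits.

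For part (1) the paper says only ``straightforward computation''; your collapsing argument via $\gamma_j=\alpha^{(i_0)}_j\beta_1^*$ is the right idea and yields $\Cstred(E,C)\cong P\,M_2(C^*(F))\,P$ with $P=1\otimes e_{11}+v\otimes e_{22}$. Two small points. First, your appeal to Cuntz--Krieger uniqueness for injectivity of $C^*(F)\hookrightarrow\Cstred(E,C)$ is not quite right, since $F$ can have a cycle with no entry (take $n=1$, $s_1=1$, where $F$ is a single loop); use the gauge-invariant uniqueness theorem instead, noting that the gauge action on $\Cstred(E,C)$ restricts to $\gotA_0$. Second, and more interestingly, your caution about the final upgrade $P\sim 1_{M_2(C^*(F))}$ is entirely justified: one needs $v\sim 1_{C^*(F)}$, and this can genuinely fail. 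For instance with $n=2$, $(r_1,r_2)=(1,0)$, $(s_1,s_2)=(0,3)$ one has $C^*(F)\cong M_4(\C)$ with $v$ of rank $3$, while a direct computation gives $\Cstred(E,C)\cong M_7(\C)\ne M_8(\C)$. So your corner statement is the robust one; the literal ``$\cong M_2(C^*(F))$'' in the lemma should be read as ``Morita equivalent to $C^*(F)$'' in general, which is all that is used later.
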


\begin{proof}
(1) It is a straightforward computation.

(2) There is a faithful trace by Proposition
\ref{prop:balancedcase}. In cases (b), (c), (d), (e) one can use
\cite[Corollary 4.4]{AG2} to show simplicity and uniqueness of the
trace, because both $vAv$ and $vBv$ are at least $2$-dimensional,
and at least one of them is a matrix algebra of size at least
$2\times 2$.
\end{proof}

\medskip

The two cases remaining to analyze when $N=M=2$ are:

$\bullet$ $n=3$ and $r(\alpha _i)=r(\beta_j)$ for some $i,j$.

$\bullet$ $n=2$ and $r(\alpha_i)=r(\beta_i)$ for $i=1,2$.

\medskip

We now study these two cases, in which we cannot apply the
generalization of Avitzour's Theorem. It is enough to pay attention
to the structure of the full corner $v\Cstred (E,C)v$. This is what
turns out to be a significant C*-algebra in these examples.

\begin{center}{
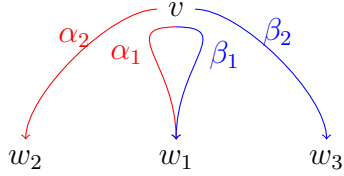
\begin{figure}[htb]
\begin{tikzpicture}[scale=2]
\node (v) at (1,1)  {$v$};
 \node (w_1) at (1,0) {$w_1$};
 \node (w_2) at (0,0) {$w_2$};
 \node (w_3) at (2,0) {$w_3$};
 \draw[->,red]  (v.west) ..  node[above]{$\alpha_2$} controls+(left:3mm) and +(up:3mm) ..
 (w_2.north) ;
 \draw[->,red] (v.south) .. node[below, left]{$\alpha_1$}  controls+(left:4mm) and +(up:5mm) ..
 (w_1.north);
\draw[->,blue] (v.south) .. node[below, right]{$\beta_1$}
controls+(right:4mm) and +(up:5mm) ..
 (w_1.north);
\draw[->,blue] (v.east) .. node[above]{$\beta_2$}
controls+(right:3mm) and +(up:3mm) ..
 (w_3.north);
\end{tikzpicture}
\caption{The separated graph $(E,C)$}
\end{figure}}
\end{center}

\begin{center}{
\begin{figure}[htb]
\begin{tikzpicture}[scale=2]
\node (v) at (0,1)  {$v$};
 \node (w_1) at (0,0) {$w_1$};
 \node (w_2) at (0,-1) {$w_2$};
 \draw[->,red]  (v.south) ..  node[below, left]{$\alpha_1$} controls+(left:4mm) and +(up:5mm) ..
 (w_1.north) ;
 \draw[->,red] (v.west) .. node[below, left]{$\alpha_2$}  controls+(left:14mm) and +(up:14mm) ..
 (w_2.north);
\draw[->,blue] (v.east) .. node[below, right]{$\beta_2$}
controls+(right:14mm) and +(up:14mm) ..
 (w_2.north);
\draw[->,blue] (v.south) .. node[below, right]{$\beta_1$}
controls+(right:4mm) and +(up:5mm) ..
 (w_1.north);
\end{tikzpicture}
\caption{The separated graph $(F,D)$}
\end{figure}
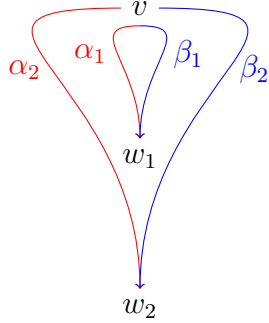}
\end{center}

We start by analyzing the full C*-algebras.

\begin{lemma}
\label{lem:full-algebras}
\begin{enumerate}
\item
 Let $(E,C)$ be the separated graph associated to the presentation
 $\langle a,b,c\mid a+b= a+c \rangle $ (see Figure 1). Then
the corner $vC^* (E,C)v$ is the universal unital C*-algebra
generated by a partial isometry.
\item Let $(F,D)$ be the separated graph associated to the
presentation $\langle a,b \mid a+b=a+b \rangle$ (see Figure 2). Then
$$vC^* (F,D) v \cong C^*( (\bgast _{\Z}\Z_2)\rtimes \Z) \,$$
where $\Z$ acts on $\bgast_{\Z} \Z_2$ by shifting the factors of the
free product. Moreover $vC^*(E,C)v$ is a C*-subalgebra of
$vC^*(F,D)v$.
\end{enumerate}
\end{lemma}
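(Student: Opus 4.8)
The statement has two parts, and I would treat them separately. For part (1), the separated graph $(E,C)$ associated to $\langle a,b,c\mid a+b=a+c\rangle$ has $n=3$, $N=M=2$, with $X=\{\alpha_1,\alpha_2\}$ and $Y=\{\beta_1,\beta_2\}$, and $r(\alpha_1)=r(\beta_1)=w_1$ (say), $r(\alpha_2)=w_2$, $r(\beta_2)=w_3$. The plan is to write down the generators and relations of $C^*(E,C)$ explicitly from Definition \ref{def:LPASG}, compress at the vertex $v$, and recognize the result. Concretely, $(SCK1)$ forces $\alpha_i^*\alpha_j=\delta_{ij}w_i$ and $\beta_i^*\beta_j=\delta_{ij}r(\beta_i)$, while $(SCK2)$ gives $v=\alpha_1\alpha_1^*+\alpha_2\alpha_2^*=\beta_1\beta_1^*+\beta_2\beta_2^*$. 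I would then set $s:=\alpha_1^*\beta_1$, a partial isometry in $vC^*(E,C)v$ (since $s^*s=\beta_1^*\alpha_1\alpha_1^*\beta_1\le\beta_1^*\beta_1=w_1$ pulled back, etc.), and check that $vC^*(E,C)v$ is generated by $v$ and $s$ with $s$ a partial isometry and no further relations. The universal property should be verified by exhibiting, for any unital C*-algebra $D$ with a partial isometry $t$, a $*$-homomorphism $C^*(E,C)\to M_k(D)$ (some small $k$) compressing to $t\mapsto s$; the ranges/domains of $\alpha_i,\beta_j$ give enough room to absorb the defect projections $1-tt^*$ and $1-t^*t$ into $w_2$ and $w_3$. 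The main point to get right is the bookkeeping that no relation beyond ``$s$ is a partial isometry'' survives compression, i.e. that the three sink projections $w_1,w_2,w_3$ together with the two $(SCK2)$ relations impose nothing on $s$ other than $s^*s,ss^*\le v$.

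\textbf{Part (2), the isomorphism.} For $(F,D)$ with $\langle a,b\mid a+b=a+b\rangle$ we have $n=2$, $N=M=2$, and the distinguishing feature $r(\alpha_i)=r(\beta_i)=w_i$ for $i=1,2$. Again I would compute $vC^*(F,D)v$ by compression: $(SCK2)$ gives $v=\alpha_1\alpha_1^*+\alpha_2\alpha_2^*=\beta_1\beta_1^*+\beta_2\beta_2^*$, and because the two partitions have the \emph{same} ranges, the elements $u_i:=\beta_i\alpha_i^*$ are partial isometries with $u_i^*u_i=\alpha_i\beta_i^*\beta_i\alpha_i^*=\alpha_i\alpha_i^*=:p_i$ and $u_iu_i^*=\beta_i\beta_i^*=:q_i$, where $p_1+p_2=v=q_1+q_2$. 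Set $u:=u_1+u_2$; then $u$ is a unitary in $vC^*(F,D)v$ (since $u^*u=p_1+p_2=v=q_1+q_2=uu^*$). Meanwhile $e:=\alpha_1\alpha_1^*$ and $f:=\beta_1\beta_1^*$ are projections in $vC^*(F,D)v$; I expect $vC^*(F,D)v=C^*(v,e,f)$ after one checks that $\alpha_1,\alpha_2,\beta_1,\beta_2$ can all be recovered (up to the sinks, which are absorbed) from these. The crossed product $C^*((\bgast_{\Z}\Z_2)\rtimes\Z)$ is generated by a unitary $U$ implementing the shift and, for each $k\in\Z$, an order-two unitary $g_k$ with $Ug_kU^*=g_{k+1}$; equivalently by $U$ and $g_0$, i.e. by a unitary $U$ and a single self-adjoint unitary $g_0$ with no relation between $U$ and $g_0$. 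Writing $g_0=2f_0-v$ for a projection $f_0$, the crossed product is the universal unital C*-algebra generated by a unitary $U$ and a projection $f_0$ with no relations. So the plan is: produce a map $C^*((\bgast_\Z\Z_2)\rtimes\Z)\to vC^*(F,D)v$ by $U\mapsto u$, $f_0\mapsto e$ (or $f$), and an inverse by the universal property of $C^*(F,D)$ using that $vC^*(F,D)v$ is, after compression, generated by a unitary and a projection subject to no further relation. The step I expect to be the real obstacle is establishing that \emph{no} relation links $u$ and $e$ in $vC^*(F,D)v$ --- i.e. freeness of the generating unitary and projection --- which is exactly the content one needs for the map to the crossed product to be injective; I would try to prove this either by exhibiting a faithful representation of $vC^*(F,D)v$ on which $u$ and $e$ visibly generate a copy of $C^*(\text{universal unitary})\bgast_{\C}C^*(\text{universal projection})$-type algebra, or by directly constructing a $*$-homomorphism from the universal ``unitary and free projection'' algebra onto $vC^*(F,D)v$ and checking it is injective via a trace or a concrete model (e.g. the permutative/Bernoulli representation of $(\bgast_\Z\Z_2)\rtimes\Z$).

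\textbf{The inclusion $vC^*(E,C)v\subseteq vC^*(F,D)v$.} For the last clause I would exhibit $(E,C)$ as sitting inside $(F,D)$ by a quotient/inclusion of separated graphs inducing a $*$-homomorphism on full C*-algebras, or more directly match the generators: the universal partial isometry $s$ from part (1) should be identified with $u_1=\beta_1\alpha_1^*$ (a partial isometry in $vC^*(F,D)v$ with domain $e=\alpha_1\alpha_1^*$ and range $f=\beta_1\beta_1^*$), so that $C^*(v,s)\hookrightarrow C^*(v,u,e)=vC^*(F,D)v$. Since $vC^*(E,C)v$ is the \emph{universal} C*-algebra on a partial isometry by part (1), there is a canonical $*$-homomorphism $vC^*(E,C)v\to vC^*(F,D)v$ sending the generating partial isometry to $u_1$; the remaining task is to show it is injective. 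I would do this by producing a unital C*-algebra $D$ with a partial isometry $t$ together with a $*$-homomorphism $vC^*(F,D)v\to D$ (or to a matrix amplification) carrying $u_1\mapsto t$ compatibly, so that the composite $vC^*(E,C)v\to D$ is the canonical map out of the universal algebra on a partial isometry, which is injective --- equivalently, show that the universal partial isometry $u_1$ inside $vC^*(F,D)v$ generates a copy of Brenken's universal partial-isometry C*-algebra, using freeness of $u$ and $e$ established in part (2). The overall hard part of the whole lemma is thus the same freeness statement; once it is in hand, parts (1), (2)-isomorphism, and the inclusion all follow by identifying universal objects.
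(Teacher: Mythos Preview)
Your overall approach matches the paper's, but there is one genuine slip and one place where you have not identified the right lever.

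\textbf{The slip in Part (1).} Your element $s:=\alpha_1^*\beta_1$ lies in $w_1C^*(E,C)w_1$, not in $vC^*(E,C)v$: from relation (E) we have $\alpha_1=v\alpha_1 w_1$, so $\alpha_1^*\beta_1=w_1\alpha_1^*\beta_1 w_1$. Your own computation $s^*s\le w_1$ already shows this. The correct choice is $s:=\beta_1\alpha_1^*\in vC^*(E,C)v$, with $s^*s=\alpha_1\alpha_1^*=:p$ and $ss^*=\beta_1\beta_1^*=:q$. With that change your plan for Part (1) goes through exactly as in the paper: one direction by universality of the partial-isometry algebra, the inverse by building a representation of $C^*(E,C)$ into a matrix amplification of any target algebra using the universal property of $C^*(E,C)$.

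\textbf{The ``freeness'' in Part (2) is not the hard part.} You correctly identify $u=\beta_1\alpha_1^*+\beta_2\alpha_2^*$ and $p_0=\alpha_1\alpha_1^*$, and correctly note that $C^*((\bgast_\Z\Z_2)\rtimes\Z)$ is the universal unital C*-algebra on a unitary and a projection. But the step you flag as the ``real obstacle'' --- that $u$ and $p_0$ satisfy no hidden relation --- is exactly what the universal property of $C^*(F,D)$ gives you for free, and you should commit to that route rather than reach for traces or concrete models. Concretely: given any unital $B$ with a unitary $U$ and projection $P$, send $v\mapsto e_{11}$, $w_1\mapsto Pe_{22}$, $w_2\mapsto(1-P)e_{22}$, $\alpha_1\mapsto Pe_{12}$, $\alpha_2\mapsto(1-P)e_{12}$, $\beta_i\mapsto U\cdot(\text{image of }\alpha_i)$ in $M_2(B)$. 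This is a $*$-representation of $C^*(F,D)$, and compressing by $e_{11}$ yields the map $vC^*(F,D)v\to B$ with $u\mapsto U$, $p_0\mapsto P$. That is the inverse you need; no freeness argument is required.

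\textbf{The inclusion.} Your idea --- take the universal map $vC^*(E,C)v\to vC^*(F,D)v$ sending $s\mapsto up_0$ and prove injectivity by factoring a faithful representation of the source through the target --- is exactly what the paper does, but you have not pinpointed what makes the factoring possible. The point is this: choose a faithful unital representation $\rho$ of the universal partial-isometry algebra on a separable Hilbert space $H$ such that both $(1-\rho(p))H$ and $(1-\rho(q))H$ are infinite-dimensional (such a $\rho$ exists by taking an infinite amplification). Then $\rho(s)$ extends to a unitary $U$ on $H$ with $U\rho(p)=\rho(s)$; the pair $(U,\rho(p))$ now gives a representation of the universal unitary-plus-projection algebra, i.e.\ of $vC^*(F,D)v$, through which $\rho$ factors. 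Since $\rho$ is faithful, the inclusion is injective. The ingredient you were missing is precisely the infinite-dimensionality of the complementary spaces, which is what allows the partial isometry to be completed to a unitary.
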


\begin{proof}
(1) We have $r(\alpha _1)=w_1=r(\beta_1)$ and $r(\alpha _2)=w_2$,
$r(\beta_2)=w_3$. Let $s=\beta_1\alpha _1^*\in vC^*(E,C)v$. Then
$s^*s= \alpha_1 \alpha_1 ^*=:p$ and $ss^*=\beta_1\beta_1^*=: q$. Let
$\mathfrak G$ be the universal unital C*-algebra generated by a
partial isometry $w$. Then there is a unique unital $*$-homomorphism
$ \mathfrak G\to vC^*(E,C)v$ sending $w$ to $s$. It is not
difficult, using the universal property of $C^*(E,C)$, to build an
inverse of this homomorphism.

\medskip

(2) Here we have $r(\alpha _i)=r(\beta_i)=w_i$ for $i=1,2$. Consider
the unitary $u=\beta _1\alpha_1^*+\beta _2\alpha_2^*$ in
$vC^*(F,D)v$, and the projection $p_0:=\alpha _1\alpha _1^*$. Let
$\mathfrak U= C^*(u',p')$ be the universal C*-algebra generated by a
unitary $u'$ and a projection $p'$. There exists a unique
$*$-homomorphism $\mathfrak U\to vC^*(F,D)v$ sending $u'$ to $u$ and
$p'$ to $p$. It is easily seen (again using the universal property
of $C^*(F,D)$) that this map is an isomorphism. So we will identify
$u'$ with $u$ and $p'$ with $p$, and we shall write $\mathfrak U
=vC^*(F,D)v$.

\medskip

We will now show that $\mathfrak U\cong C^* ( (\bgast
_{\Z}\Z_2)\rtimes \Z)$. Write $p_0=p$ and $p_n=u^np(u^*)^n$. Let
$(u_n)_{n\in \Z}$ denote the unitaries in $C^* ( (\bgast
_{\Z}\Z_2)\rtimes \Z) $ corresponding to the generators of the
different copies of $\Z_2$, and let $z$ be the unitary implementing
the action of $\Z$ on $\bgast _{\Z} \Z_2$. We may define a
$*$-homomorphism $\varphi \colon \mathfrak U\to C^* ( (\bgast
_{\Z}\Z_2)\rtimes \Z)$ by sending $u$ to $z$ and $p$ to
$\frac{1-u_0}{2}$. On the other hand, we have a unitary
representation of $(\bgast _{\Z}\Z_2)\rtimes \Z$ on $\mathfrak U$
obtained by sending $u_n$ to $1-2p_n$ and $z$ to $u$. It is
straightforward to check that $\varphi$ and $\psi$ are mutually
inverse.

\medskip

Now we show that the canonical homomorphism $\eta \colon \mathfrak G
\to \mathfrak U$ sending $s$ to $up$ is an isometry. There is a
faithful representation $\rho $ of $ \mathfrak G$ on a separable
Hilbert space $H$ such that both $(1 -\rho (p))H$ and $(1-\rho
(q))H$ are infinite-dimensional. Therefore there is a unitary $U$ on
$H$ extending $\rho (s)$, so that $U\rho (p)=\rho (s)$. It follows
that the  $*$-homomorphism $\rho \colon \mathfrak G \to B(H)$
factors through $\mathfrak U$, that is there is a $*$-homomorphism
$\varphi \colon \mathfrak U\to B(H)$ such that $\rho = \varphi \circ
\eta$. Since $\rho $ is faithful, we see that $\eta$ is injective,
and so it is an isometry.
\end{proof}

The universal {\it non-unital} C*-algebra $\mathfrak G '$ generated
by an isometry has recently been studied by Brenken and Niu in
\cite{BN}. The C*-algebra $\mathfrak G = vC^*(E,C)v$ of Lemma
\ref{lem:full-algebras}(1) is just the unitization of $\mathfrak G
'$. Some properties of $C^*(E,C)$ can thus be derived from
\cite{BN}, for instance we see from \cite[Corollary 1]{BN} that
$C^*(E,C)$ is a non-exact C*-algebra.

\medskip

The following result was obtained in collaboration with Ken
Goodearl.

\begin{proposition}
\label{prop:reducedgpCstar} Let $(F,D)$ be the separated graph
associated to the presentation $\langle a,b \mid a+b=a+b \rangle$.
Then
$$v\Cstred (F,D)v\cong C^*_r((\bgast _{\Z} \Z_2)\rtimes \Z)
\cong (C^*_r(\bgast_{\Z} \Z_2))\rtimes _r \Z .$$
\end{proposition}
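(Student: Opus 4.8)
The plan is to realise $v\Cstred(F,D)v$ as a quotient of the full algebra $vC^*(F,D)v$ and to identify it as the reduced group C*-algebra of $G:=(\bgast_{\Z}\Z_2)\rtimes\Z$ by means of the tracial state furnished by Proposition~\ref{prop:balancedcase}. By Lemma~\ref{lem:full-algebras}(2), $vC^*(F,D)v\cong\mathfrak{U}\cong C^*(G)$, the isomorphism carrying $u:=\beta_1\alpha_1^*+\beta_2\alpha_2^*$ to the canonical unitary $z$ that shifts the free factors of $\bgast_{\Z}\Z_2$ and $p:=\alpha_1\alpha_1^*$ to $\frac12(1-u_0)$. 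The canonical surjection $C^*(F,D)\to\Cstred(F,D)$ fixes the vertex projection $v$, hence compresses to a surjection $\pi\colon vC^*(F,D)v\to v\Cstred(F,D)v$; via the above identification, $v\Cstred(F,D)v$ becomes a quotient of $C^*(G)$ and $\rho:=\pi|_G$ is a unitary representation of $G$ generating it. I will use the elementary principle that if a C*-algebra $\mathcal{M}$ is generated by a unitary representation $\rho$ of a discrete group $H$ and carries a faithful tracial state $\theta$ with $\theta(\rho(h))=\delta_{h,e}$ for all $h$, then $\rho$ integrates to an isomorphism $C^*_r(H)\cong\mathcal{M}$: in that case $\theta\circ\rho$ is a tracial state on $C^*(H)$ agreeing with the canonical trace $\tau_H$ on group elements, so $\theta\circ\rho=\tau_H$; faithfulness of $\theta$ makes the GNS representation of $\theta\circ\rho$ have kernel exactly $\ker\rho$, while the GNS representation of $\tau_H$ is the left regular representation, whose kernel cuts out $C^*_r(H)$, so $\ker\rho=\ker\lambda_H$.

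To apply this with $H=G$ and $\mathcal{M}=v\Cstred(F,D)v$, I take the faithful tracial state $\theta=\tau\circ\Phi$ on $\Cstred(F,D)$ supplied by Proposition~\ref{prop:balancedcase} (which applies since $M=N=2$) and set $\theta_Q:=\theta(v)^{-1}\,\theta|_{v\Cstred(F,D)v}$, a faithful tracial state on the corner with $\theta_Q(\pi(e))=\theta_Q(v)=1$. Everything then comes down to verifying $\theta(\pi(g))=0$ for every $g\in G\setminus\{e\}$. Writing $g$ as a reduced word $s^{m}u_{n_1}\cdots u_{n_r}$ (with $n_j\neq n_{j+1}$), we have $\pi(g)=u^{m}(1-2p_{n_1})\cdots(1-2p_{n_r})$, where now $u$ is a \emph{unitary} in $v\Cstred(F,D)v$ (since $u^*u=uu^*=v$ by relation (SCK1) applied to $X$ and to $Y$), $p_n:=u^{n}p u^{-n}$, and $1-2p_0=\alpha_2\alpha_2^*-\alpha_1\alpha_1^*$ lies in $vAv\cap\ker\Phi$. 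Using that $\theta$ is a trace (and treating $r=0$ directly, where one needs $\theta(u^{m})=0$ for $m\neq0$), one reduces to proving
\[
\theta\big((1-2p_0)\,u^{k_1}(1-2p_0)\,u^{k_2}\cdots(1-2p_0)\,u^{k_\ell}\big)=0
\]
for every $\ell\ge1$ whose interior exponents $k_1,\dots,k_{\ell-1}$ are nonzero.

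This identity is the technical heart and the step I expect to be the main obstacle. My approach would be to expand each $u^{k}$ $(k>0)$ as a finite sum of reduced words $\beta_{i_1}\alpha_{i_1}^*\cdots\beta_{i_k}\alpha_{i_k}^*$ of length $2k$ (beginning with a letter of $B^{{\rm o}}$ and ending with one of $A^{{\rm o}}$), and each $u^{-k}$ dually, then multiply everything out and bring it to reduced form in $(A,\Phi_A)*_{\C^3}(B,\Phi_B)$. Using the relations $\alpha_i^*\alpha_j=\delta_{ij}w_i$, $\beta_i^*\beta_j=\delta_{ij}w_i$ and $u\alpha_i=\beta_i$, together with the identities $(1-2p_0)\alpha_i=\pm\alpha_i$ and $\alpha_i^*(1-2p_0)=\pm\alpha_i^*$, one shows by induction on $\ell$ and on $\sum_j|k_j|$ that every resulting term is a scalar multiple of an alternating word in $A^{{\rm o}},B^{{\rm o}}$ of length $\ge1$; the crucial point is that the presence of at least one factor $1-2p_0$ together with the nonzero interior exponents prevents the product from collapsing into the amalgam $\C^{3}$. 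Since the conditional expectation of the reduced amalgamated free product annihilates every such word, the identity follows; this is the same kind of reduced-word computation as in \cite{DykemaScan} and in the proof of Corollary~\ref{corol:Dykema's}.

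Granting the identity, the principle above yields $v\Cstred(F,D)v\cong C^*_r(G)=C^*_r((\bgast_{\Z}\Z_2)\rtimes\Z)$, the isomorphism sending $\alpha_1\alpha_1^*\mapsto\frac12(1-u_0)$ and $\beta_1\alpha_1^*+\beta_2\alpha_2^*\mapsto z$. For the remaining isomorphism one invokes the standard identity $C^*_r(N\rtimes H)\cong C^*_r(N)\rtimes_r H$ for a semidirect product of discrete groups (which follows from $\ell^2(N\rtimes H)\cong\ell^2(N)\otimes\ell^2(H)$, under which the left regular representation of $N\rtimes H$ becomes the regular covariant representation of the system $(C^*_r(N),H)$), applied with $N=\bgast_{\Z}\Z_2$ and $H=\Z$.
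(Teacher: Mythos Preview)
Your strategy is sound and genuinely different from the paper's. You exploit Lemma~\ref{lem:full-algebras}(2) to view $v\Cstred(F,D)v$ as a quotient of $C^*(G)$, then use the faithful trace from Proposition~\ref{prop:balancedcase} together with the standard GNS characterisation of reduced group C*-algebras. The paper instead works on the other side: it builds explicit $*$-homomorphisms $\sigma_A,\sigma_B$ from $A,B$ into $T:=M_2\bigl(C^*_r(G)\bigr)$, equips $T$ with a conditional expectation $\Theta$ onto $\C^3$, and verifies the abstract axioms (1)--(5) of Definition~\ref{def:redamprod}. Thus the paper checks freeness \emph{inside the target} (where one can use the canonical trace and the normal form of $G$), while you must check the vanishing condition $\theta(\pi(g))=0$ \emph{inside the source} $\Cstred(F,D)$, using its amalgamated-free-product structure over $\C^3$.

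Both routes therefore hinge on a combinatorial verification, and yours is not obviously lighter. Your sketch (``expand $u^{k}$ and reduce'') hides real work: when a block $u^{k_j}$ with $k_j>0$ meets $(1-2p_0)u^{k_{j+1}}$ with $k_{j+1}<0$, the adjacent $A$-letters collapse as $\alpha_i^*(1-2p_0)\alpha_{i'}=\pm\delta_{ii'}w_i\in C$, and the resulting $w_i$ then merges the flanking $B$-letters into $\beta_j\beta_{j'}^*$, which is \emph{not} in $B^{\mathrm o}$ in general. So individual terms do drop out of $\Lambda^{\mathrm o}(A^{\mathrm o},B^{\mathrm o})$; what you really need is that the $C$-component of the full sum vanishes. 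A cleaner bookkeeping device is to rewrite your product as $(1-2p_{K_0})(1-2p_{K_1})\cdots(1-2p_{K_{\ell-1}})\,u^{K_\ell}$ with $K_j=\sum_{i\le j}k_i$ (so consecutive $K_j$ are distinct), and then prove inductively that each $1-2p_n$ lies in the span of alternating words beginning and ending in $B^{\mathrm o}$ (for $n>0$), in $A^{\mathrm o}$ (for $n<0$), or equals $1-2p_0\in A^{\mathrm o}$ (for $n=0$); the junctions then combine without producing a $C$-term because $(1-2p_1)\beta_i=\pm\beta_i$, etc. This is exactly the kind of careful induction the paper performs (with its spaces $L^{(0)}_{jk},L^{(1)}_{jk}$), so your approach does not avoid the technical core---it relocates it. The advantage of your framing is conceptual transparency (the trace argument is elegant and reusable); the advantage of the paper's is that working in $C^*_r(G)$ makes the induction more structured, since one tracks elements by their reduced-word shape in the known group $G$ rather than by free-product length in $\Cstred(F,D)$.
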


\begin{proof}
We follow with the notation introduced in the proof of Lemma
\ref{lem:full-algebras}. We have
$$(\gotA ,\Phi )= (\Cstred (F,D),\Phi)= (A, \Phi_A)*(B, \Phi _B)\, ,$$
where $A$ and $B$ are the usual graph C*-algebras of the graphs
corresponding to the edges $\{\alpha _1,\alpha _2\}$ and
$\{\beta_1,\beta_2\}$ respectively, and the maps $\Phi_A$ and
$\Phi_B$ are the canonical conditional expectations onto $\C v+\C
w_1+\C w_2$, as defined in \cite[Theorem 2.1]{AG2}.

\medskip

Put $T=M_2(C^*_r((\bgast _{\Z} \Z_2)\rtimes \Z))$, and denote by
$e_{ij}$, $1\le i,j\le 2$ the canonical matrix units in $T$. Set
$$p_i^{\pm} =\frac{1\mp u_i}{2},\qquad i\in \Z\, .
$$
 We
define unital $*$-homomorphisms $\sigma _A\colon A\to T$ and $\sigma
_B\colon B\to T$ by
$$\sigma_A(v)=\sigma_B (v)= e_{11},\quad \sigma_A(w_1)= \sigma _B(w_1)= p_0^+e_{22},\quad
\sigma_A(w_2)=\sigma _B (w_2)= p_0^- e_{22} ,$$
$$\sigma _A(\alpha _1)= p_0^+e_{12},\quad \sigma_A(\alpha_2)= p_0^-e_{12},
\quad \sigma_B( \beta_1)= zp_0^+e_{12},\quad \sigma_B(\beta_2)=
zp_0^- e_{12} .$$

\medskip

Let $\Theta \colon T\to \C e_{11}+ \C p_0^+ +\C p_0^-$ be the
conditional expectation given by
$$\Theta \Big(\begin{pmatrix}
 a_{11} & a_{12} \\ a_{21} & a_{22}\end{pmatrix}\Big) =
\tau (a_{11}) e_{11} + \tau (p_0^+ a_{22} p_0^+)p_0^+ + \tau (p_0^-
a_{22}p_0^-)p_0^- \, ,$$ where $\tau$ is the canonical faithful
trace on the reduced group C*-algebra $C^*_r ((\bgast _{\Z}
\Z_2)\rtimes \Z)$. In order to check that $(\gotA, \Phi)\cong (T,
\Theta)$, it suffices to show that the conditions (1)--(5) in
Definition \ref{def:redamprod} are satisfied. All conditions are
easily verified, with the exception of (4).

\medskip

To show (4), we compute the kernels of $\Theta|_{\sigma_A(A)}$ and
$\Theta|_{\sigma_B(B)}$ to be $Z_0= \C u_0e_{11}
+A_0e_{12}+A_0e_{21}$ and $Z_1=\C u_1e_{11}+A_1ze_{12}+
A_0z^*e_{21}$ respectively, where $A_i=\C 1+\C u_i$ is the
subalgebra of $C^*_r((\bgast _{\Z} \Z_2)\rtimes \Z)$ generated by
$u_i$. Then we have to show that $\Theta (\alpha)= 0$ for every
$\alpha \in \Lambda^{{\rm o}}(Z_0,Z_1)$. This is shown by induction
on the length of $\alpha $. In order to prove it, we introduce the
following notation. For $i\in \Z$, denote by $F_i$ the linear span
of the set of reduced words in $\{u_n:n\in \Z \}$ ending in $u_i$.
We will write $F_{\le i}= \sum _{j\le i} F_j$ and
$\widetilde{F}_{\le i}=\C 1+F_{\le i}$, and similarly for $F_{\ge
i}$ and $\widetilde{F}_{\ge i}$.

\medskip

One shows by induction on the length of $\alpha \in \Lambda ^{{\rm
o}}(Z_0,Z_1)$ that if $\alpha $ ends in $Z_0$, then
$$\alpha \in L_{11}^{(0)}e_{11}+
L_{12}^{(0)}e_{12}+L_{21}^{(0)}e_{21}+L_{22}^{(0)}e_{22} ,$$ where
$$L_{11}^{(0)}=F_{\le 0}+ \sum _{i=1}^n[\widetilde{F}_{\le i}]z^i+
\sum_{j=1}^n [F_{\le -j}] (z^*)^{j} ,$$
$$L_{22}^{(0)}=[F_{\ge 1}]A_0+ \sum _{i=1}^{n-1}[F_{\ge i+1}]A_iz^i+
 \sum_{j=1}^{n} [\widetilde{F}_{\ge -j+1}]A_{-j} (z^*)^{j} ,$$
and $L_{21}^{(0)}=A_0+L^{(0)}_{11}$, $L_{12}^{(0)}=
A_0+L_{22}^{(0)}$, for suitable $n\ge 0$.

Correspondingly, if $\alpha$ ends in $Z_1$, then
$$\alpha \in L_{11}^{(1)}e_{11}+
L_{12}^{(1)}e_{12}+L_{21}^{(1)}e_{21}+L_{22}^{(1)}e_{22} ,$$ where
$$L_{11}^{(1)}= L_{21}^{(1)}= F_{\ge 1} + \sum _{i=1}^n [F_{\ge i+1}] z^i
+ \sum _{j=1}^{n+1} [\widetilde{F}_{\ge -j+1}] (z^*)^{j} , $$ and
$$L_{22}^{(1)}=L_{12}^{(1)}= [F_{\le -1} ]A_0
 + \sum _{i=1}^{n+1} [\widetilde{F}_{\le i-1}] A_iz^i + \sum
_{j=1}^{n-1} [F_{\le -j-1}] A_{-j}(z^*)^j .$$

This concludes the proof.
\end{proof}

\begin{corollary}
 Let $(F,D)$ be the separated graph
associated to the presentation $\langle a,b \mid a+b=a+b \rangle$.
Then $\Cstred (F,D)$ is a simple C*-algebra.
\end{corollary}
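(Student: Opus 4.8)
The plan is to leverage Proposition~\ref{prop:reducedgpCstar}, which presents the full corner $v\Cstred(F,D)v$ as the reduced group C*-algebra $C^*_r((\bgast_{\Z}\Z_2)\rtimes\Z)$ (with $\Z$ acting by shifting the free factors), and to reduce the statement to the simplicity of this corner.

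First I would observe that $v$ is a full projection in $\Cstred(F,D)$. With the notation of Figure~2, the relation (SCK1) applied inside the set $X\in C$ gives $\alpha_i^*\alpha_i=r(\alpha_i)=w_i$, while $\alpha_i\alpha_i^*\le v$; hence $w_i\sim\alpha_i\alpha_i^*\precsim v$ for $i=1,2$. Since $1=v+w_1+w_2$ in $\Cstred(F,D)$, the closed two-sided ideal generated by $v$ is the whole algebra. Thus $\Cstred(F,D)$ is a full corner over $v\Cstred(F,D)v$, Morita equivalent to it, and their lattices of closed two-sided ideals are order isomorphic; it therefore suffices to prove that $v\Cstred(F,D)v$ is simple.

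Second, I would identify the group concretely. Let $a$ generate the $0$-th copy of $\Z_2$ and let $t$ generate the acting $\Z$; then the conjugates $t^na t^{-n}$, $n\in\Z$, are exactly the free generators of $\bgast_{\Z}\Z_2$, so $(\bgast_{\Z}\Z_2)\rtimes\Z$ is generated by $a$ and $t$ subject only to $a^2=1$, i.e.\ it is isomorphic to $\Z_2*\Z$ (equivalently, $\bgast_{\Z}\Z_2$ is the kernel of the evident surjection $\Z_2*\Z\to\Z$, a one-line Reidemeister--Schreier computation). I would then invoke the Paschke--Salinas theorem on reduced C*-algebras of free products of groups: $C^*_r(G_1*G_2)$ is simple (with a unique trace) whenever $G_1,G_2$ are nontrivial and not both of order two. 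Since one factor here is $\Z$, the hypothesis holds, so $v\Cstred(F,D)v\cong C^*_r(\Z_2*\Z)$ is simple, and combining with the first step completes the argument.

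I do not expect a genuine obstacle: the only mildly delicate point is the identification $(\bgast_{\Z}\Z_2)\rtimes\Z\cong\Z_2*\Z$, which is entirely routine (Reidemeister--Schreier, or Bass--Serre theory of the $\Z$-action on the tree of $\bgast_{\Z}\Z_2$). If one preferred to avoid it, one could argue directly in the crossed-product picture $v\Cstred(F,D)v\cong C^*_r(\bgast_{\Z}\Z_2)\rtimes_r\Z$: the group $\bgast_{\Z}\Z_2$ is already C*-simple (a free product of infinitely many nontrivial groups), and the shift automorphism has all nonzero powers properly outer, so Kishimoto's simplicity criterion for $\Z$-crossed products applies; but the free-product route above is shorter.
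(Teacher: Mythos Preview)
Your argument is correct. The fullness of $v$ and the reduction to the corner are exactly as in the paper (and as in Lemma~\ref{lem:another-Aznavour}); the identification $(\bgast_{\Z}\Z_2)\rtimes\Z\cong\Z_2*\Z$ is a standard Reidemeister--Schreier computation (the kernel of $\Z_2*\Z\twoheadrightarrow\Z$ is freely generated by the conjugates $t^nat^{-n}$, each of order~$2$, and the split extension recovers the semidirect product with the shift). Paschke--Salinas then applies immediately.

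The paper takes the route you list as your alternative: it stays with the crossed-product description $C^*_r(\bgast_{\Z}\Z_2)\rtimes_r\Z$, first observes that $C^*_r(\bgast_{\Z}\Z_2)$ is simple (Avitzour), then proves that every nonzero power of the shift is outer by showing, via a direct $\ell^2$-estimate with the trace, that the relative commutant of $C^*_r(\bgast_{\Z}\Z_2)$ inside the crossed product is trivial; Kishimoto's theorem then gives simplicity. Your main route is shorter and more conceptual: recognizing the group as the free product $\Z_2*\Z$ bypasses the outerness verification entirely and invokes a single classical theorem. The paper's approach, on the other hand, yields a bit more along the way (trivial relative commutant, explicit outerness of the shift), which could be of independent use, and it avoids citing Paschke--Salinas, which the paper does not have in its bibliography.
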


\begin{proof}
By Proposition \ref{prop:reducedgpCstar}, it is enough to show that
$C_r ((\bgast _{\Z} \Z_2)\rtimes \Z)$ is simple. First observe that
$$(C^*_r(\bgast _{\Z} \Z_2),\tau) = \bgast _{i\in \Z} (C^*_r
(\Z_2),\tau_i),$$ that is $C_r^*(\bgast _{\Z} \Z_2)$ is the reduced
crossed product of countably many  group C*-algebras $C^*_r(\Z_2)$,
with respect to their canonical tracial states $\tau_i$, and so it
is simple by an application of Avitzour's Theorem. By Kishimoto's
Theorem \cite[Theorem 3.1]{Kishi}, in order to show that $C^*_r
(((\bgast _{\Z} \Z_2)\rtimes \Z))\cong (C_r^*(\bgast _{\Z}
\Z_2))\rtimes _r\Z$ is simple, it is enough to show that the action
of each non-trivial element of $\Z$ on $C_r^*(\bgast _{\Z} \Z_2)$ is
outer. To show this, we first show that the relative commutant of
$\gotB := C^*_r (\bgast _{\Z} \Z_2)$ in $\mathfrak C := C_r ((\bgast
_{\Z} \Z_2)\rtimes \Z)$ is trivial. For this, we use an argument
similar to the one in \cite[proof of Claim 3]{ChoDyk}. We put $G:=
(\bgast _{\Z} \Z_2)\rtimes \Z$.

\medskip

Suppose that $x\in \mathfrak C$ and $x$ commutes with $\gotB$. We
will show that $x_0= x-\tau (x)1$ is zero. Suppose, to obtain a
contradiction, that $x_0\ne 0$. Since $\tau$ is faithful, $\|
x_0\|_2=\tau (x_0^*x_0)^{1/2}>0$. Choose $\epsilon >0$ so that
$0<\epsilon < \frac{ \|x_0\|_2}{3}$. There is an element $y$ in the
group algebra $\C G$ such that
$$y=\sum_{g\in F} \lambda _g g \, ,$$
where $F$ is a finite subset of $G\setminus \{1\}$, $\lambda _g\in
\C\setminus \{0\}$, and $\| x_0-y\| <\epsilon $. We consider the
canonical expression of elements in $G$, as $wz^j$, where $w$ is a
reduced word in the $u_i$'s and $j$ is an integer. Let $I$ be the
finite subset of $\Z$ consisting of those integers $n$ such that
$u_n$ is involved in the canonical expression of some of the
elements of $F$. Let $J$ be the (finite) set of integers that appear
as powers of $z$ in the canonical expression of the elements of $F$.
Take $N\in \N$ big enough so that $N > \text{max}\{i, i'-j\mid
i,i'\in I, j\in J \}$,  and write $v= u_N$.

\medskip

We claim that $vy$ and $yv$ are orthogonal with respect to the inner
product on $\mathfrak C$ induced by $\tau$, that is, $\tau
(v^*y^*vy)=0$. Indeed we have
$$\tau (v^*y^*vy)=\tau ( \sum _{g,h\in F} \ol{\lambda} _g\lambda _h u_N
g^{-1} u_N h ) =0\, ,
$$
because of the choice of $N$. Now, by orthogonality of $vy$ and
$yv$, we have $\|vy-yv\|\ge \| vy-yv \|_2> \| vy\|_2 =\| y\|_2$, and
thus
$$\| vx_0 -x_0x\| \ge \| vy-yv\| -2\epsilon > \| y \|_2 -2\epsilon \ge \| x_0\|_2 -3\epsilon
>0, $$
contradicting the fact that $x_0$ centralizes $\gotB$.

\medskip

It is now easy to see that, for every non-zero integer $m$, the
action $\alpha ^m$ is outer on $\gotB$. Indeed, if $\alpha ^m$ is an
inner automorphism of $\gotB$, induced by a unitary $d$ in $\gotB$,
then there is $\lambda \in \mathbb T$ such that $d= \lambda z^m$ in
$\mathfrak C=\gotB \rtimes _r \Z$, which is a contradiction.
Finally, Kishimoto's Theorem \cite[Theorem 3.1]{Kishi} gives that
$\mathfrak C$, and so $\Cstred (F,D)$, is a simple C*-algebra.
\end{proof}

Finally, we show that the embedding of $vC^*(E,C)v$ into
$vC^*(F,D)v$ established in Lemma \ref{lem:full-algebras}(2) extends
to the reduced setting.

\begin{proposition}
\label{prop:embedding-inredsettin} Adopt the notation of Lemma
\ref{lem:full-algebras}. Then there is a trace-preserving embedding
$v\Cstred (E,C)v\hookrightarrow v\Cstred (F,D)v.$
\end{proposition}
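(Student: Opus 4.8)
The plan is to show that the full embedding $\eta\colon vC^*(E,C)v\hookrightarrow vC^*(F,D)v$ of Lemma~\ref{lem:full-algebras}(2), followed by the restriction to the corner at $v$ of the canonical surjection $\pi_F\colon C^*(F,D)\to\Cstred(F,D)$, factors through the corresponding restriction of $\pi_E\colon C^*(E,C)\to\Cstred(E,C)$, and that the resulting $*$-homomorphism $v\Cstred(E,C)v\to v\Cstred(F,D)v$ is injective and trace-preserving; by construction it will extend $\eta$. Recall from the proof of Lemma~\ref{lem:full-algebras}(2) that $vC^*(E,C)v$ is the unital C*-algebra generated by the partial isometry $s=\beta_1\alpha_1^*$, and that $\eta(s)=up=\beta_1\alpha_1^*$, so $\eta$ carries each $*$-monomial in $s$ to the \emph{same} $*$-monomial in $\beta_1,\alpha_1$ now read inside $C^*(F,D)$. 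Since $M=N=2$ for both presentations $\langle a,b,c\mid a+b=a+c\rangle$ and $\langle a,b\mid a+b=a+b\rangle$, Proposition~\ref{prop:balancedcase} supplies faithful tracial states on $\Cstred(E,C)$ and $\Cstred(F,D)$; let $\phi_E,\phi_F$ be the induced (normalised) faithful tracial states on $v\Cstred(E,C)v$ and $v\Cstred(F,D)v$. A direct computation shows that, for $x\in vC^*(E,C)v$, $\phi_E(\pi_E(x))$ is the coefficient of $v$ in $\Phi(\pi_E(x))\in C(E^0)$, where $\Phi$ is the conditional expectation of Definition~\ref{def:redCstar}; in particular $\phi_E$ does not depend on the choice of faithful state on $C(E^0)$ made in Proposition~\ref{prop:balancedcase}, and likewise for $\phi_F$.

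The crux will be the identity $\phi_F(\pi_F(\eta(x)))=\phi_E(\pi_E(x))$ for all $x\in vC^*(E,C)v$. Both sides are tracial states on the unital C*-algebra generated by $s$, so it suffices to check them on $*$-monomials in $s$. Writing $s=\beta_1\alpha_1^*$, $s^*=\alpha_1\beta_1^*$, one reduces such a monomial inside a graph C*-algebra using only the relations $\alpha_1^*\alpha_1=w_1=\beta_1^*\beta_1$, $\alpha_1\alpha_1=\beta_1\beta_1=0$, $v\alpha_1=\alpha_1$, $v\beta_1=\beta_1$ and the defining relations (V), (E), (SCK1) for $\alpha_1,\beta_1,v,w_1$, all of which hold verbatim in both $C^*(E,C)$ and $C^*(F,D)$. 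After reduction one obtains either a short word whose conditional expectation is an explicit rational multiple of $v$, assembled from the constants $\tfrac1N$ and $\tfrac1M$ that occur in $\Phi_A(\alpha_1\alpha_1^*)=\tfrac1N v$ and $\Phi_B(\beta_1\beta_1^*)=\tfrac1M v$ -- and these coincide in the two cases because $N=M=2$ for both graphs -- or an alternating word in $\ker\Phi_A\cup\ker\Phi_B$ of length $\ge 2$, which the conditional expectation annihilates in both reduced algebras by Definition~\ref{def:redamprod}(4). Hence the conditional expectations of the word agree, and the identity follows. (Equivalently one may show that the $*$-subalgebra of $C^*(E,C)$ generated by $v,w_1,\alpha_1,\beta_1$ is $*$-isomorphic -- by the identity on generators and compatibly with the canonical conditional expectations -- to its counterpart in $C^*(F,D)$, using a faithful representation as at the end of the proof of Lemma~\ref{lem:full-algebras}(2), and then restrict to the corner at $v$; or one may carry the computation out concretely inside $C^*_r((\bgast_\Z\Z_2)\rtimes\Z)$ using Proposition~\ref{prop:reducedgpCstar} and the canonical group trace.)

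Granting the identity, I would finish as follows. If $x\in vC^*(E,C)v$ lies in $\ker\pi_E$, then $\phi_F(\pi_F(\eta(x^*x)))=\phi_E(\pi_E(x^*x))=0$, so faithfulness of $\phi_F$ gives $\pi_F(\eta(x))=0$; hence $\pi_F\circ\eta$ factors as $\bar\eta\circ\pi_E$ for a $*$-homomorphism $\bar\eta\colon v\Cstred(E,C)v\to v\Cstred(F,D)v$. Since $\pi_E$ maps $vC^*(E,C)v$ onto $v\Cstred(E,C)v$, the identity becomes $\phi_F\circ\bar\eta=\phi_E$, so $\bar\eta$ is trace-preserving, and, $\phi_E$ being faithful, $\bar\eta$ is injective. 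Finally $\bar\eta\circ\pi_E=\pi_F\circ\eta$, so $\bar\eta$ extends $\eta$ in the required sense.

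I expect the identity $\phi_F\circ\pi_F\circ\eta=\phi_E\circ\pi_E$ to be the only genuine obstacle: one must verify that the canonical conditional expectation of a word involving only $\alpha_1$ and $\beta_1$ cannot detect the single way in which $(E,C)$ and $(F,D)$ differ, namely whether $r(\alpha_2)=r(\beta_2)$. It cannot, because the only numerical input to the computation consists of the block sizes $|X|=N$ and $|Y|=M$, which are equal to $2$ for both graphs.
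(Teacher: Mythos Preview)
Your approach is correct and genuinely different from the paper's. You exploit that $vC^*(E,C)v$ is generated by the single partial isometry $s=\beta_1\alpha_1^*$ and argue that the canonical conditional expectation of any $*$-word in $s$ is computed by a universal recursion whose only numerical inputs are $\Phi_A(\alpha_1\alpha_1^*)=\tfrac1N v$ and $\Phi_B(\beta_1\beta_1^*)=\tfrac1M v$; since $N=M=2$ for both graphs, the traces coincide, and faithfulness of $\phi_E$ delivers the embedding. The paper instead embeds the \emph{whole} algebra $\Cstred(E,C)$ into a corner $\mathfrak D$ of $M_2(\Cstred(F,D))$ (the extra matrix dimension absorbs the vertex $w_3$ that has no counterpart in $F$) by writing down explicit $*$-homomorphisms on the two free factors and verifying the freeness condition (4) of Definition~\ref{def:redamprod} by a case analysis on words containing $\beta_2,\beta_2^*$; cutting down by $v$ then gives the corner statement. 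Your route is more elementary and goes straight to the corner; the paper's gives more, namely an embedding of $\Cstred(E,C)$ itself. One minor imprecision in your sketch: the dichotomy ``either a short word \ldots\ or an alternating word'' is not quite how the computation goes---centering each $A$- or $B$-block produces a \emph{linear combination} of alternating words and strictly shorter words, and one finishes by induction on length---but the inductive scheme you describe is sound, and your final paragraph correctly identifies why the only structural difference between $(E,C)$ and $(F,D)$ (whether $r(\alpha_2)=r(\beta_2)$) is invisible to $\Phi$ on words in $\alpha_1,\beta_1$.
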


\begin{proof}
Consider the subalgebra $\mathfrak D = (1\oplus w_2 )M_2(\Cstred
(F,D))(1\oplus w_2)$ of $M_2(\Cstred (F,D))$. Set $C=\C v \oplus \C
w_1\oplus \C w_2 \oplus \C w_3$. Define $º\widetilde{\Phi }\colon
\mathfrak D \to C$ by
$$\widetilde{\Phi}\Big( \begin{pmatrix} x_{11} & x_{12} \\ x_{21} &
x_{22}
\end{pmatrix}
\Big)  = (\Phi (x_{11}), \tau _{w_2}(x_{22}) w_3 )\, ,$$ where $\Phi
\colon \Cstred (F,D)\to \C v\oplus \C w_1\oplus \C w_2$ is the
canonical conditional expectation and $\tau_{w_2}$ is the state of
$w_2\Cstred (F,D) w_2$ given by $\Phi (x)= \tau _{w_2}(x) w_2$ for
$x\in w_2\Cstred (F,D) w_2$. We look $C$ as a C*-subalgebra of
$\mathfrak D$ by sending $v,w_1,w_2$ to the corresponding vertices
in $(1\oplus 0) M_2(\Cstred (F,D))(1\oplus 0)$ and sending $w_3$ to
$0\oplus w_2$. With this embedding in mind, $\widetilde{\Phi}$ is a
faithful conditional expectation from $\mathfrak D$ onto $C$.

Write $\Cstred (E,C)=A_1*_C A_2$, where $A_1 = C^*(C, \alpha
_1,\alpha_2 )$ and $A_2=C^*(C, \beta_1,\beta_2 )$. We define
$*$-homomorphisms $\sigma _i\colon A_i\to \mathfrak D$ by sending
canonically $C$ to $\mathfrak D$ as above, and putting
$$\sigma_1 (\alpha _1) = \alpha_1\oplus 0,\quad \sigma (\alpha
_2)=\alpha_2\oplus 0,\quad \sigma_2(\beta_1)= \beta_1\oplus 0, \quad
\sigma_2( \beta_2) = \begin{pmatrix} 0 & \beta_2 \\ 0 & 0
\end{pmatrix}. $$
In order to show that these maps define an isomorphism from $\Cstred
(E,C)$ onto $C^*(\sigma_1 (A_1)\cup \sigma_2 (A_2) )$, it suffices
to check conditions (1)-(5) of Definition \ref{def:redamprod}. All
properties are obvious with the exception of (4). In order to check
(4), set
$$Z_1=\{
1-2\alpha_1\alpha_1^*,\alpha_1,\alpha_1^*,\alpha_2,\alpha_2^*\},
\quad Z_2 =\{ 1-2\beta_1\beta_1^*,
\beta_1,\beta_1^*,\beta_2,\beta_2^*\},$$ and let $a_1a_2\cdots
a_n\in \Lambda^{{\rm o}}(Z_1,Z_2)$, where $a_i\in Z_{\iota_i}$, with
$\iota_i\ne \iota_{i+1}$ for $i=1,\cdots , n-1$. Then we have to
prove that $\widetilde{\Phi} (\sigma
_{\iota_1}(a_1)\sigma_{\iota_2}(a_2) \cdots
\sigma_{\iota_n}(a_n))=0$. This is obvious if all the letters in
$a_1a_2\cdots a_n$ are different from $\beta_2$ and $\beta_2^*$. The
nonzero expressions involving $\beta_2$ or $\beta_2^*$ give terms in
$\mathfrak D$ which have one of the following forms:
$$\begin{pmatrix} 0 & a\beta_2 \\
0 & 0
\end{pmatrix},\quad \begin{pmatrix} 0 & 0 \\ \beta_2^*a & 0
\end{pmatrix},\quad  \text{or} \quad  \begin{pmatrix} 0 & 0\\ 0 &  \beta_2^* a \beta_2
\end{pmatrix}, $$
with $\Phi (a\beta_2) =0$,  $\Phi (\beta_2^*a)=0$,  and $\Phi (\beta
_2^* a\beta ) =0$ in the respective cases. Consequently, $
\widetilde{\Phi} (\sigma _{\iota_1}(a_1)\sigma_{\iota_2}(a_2) \cdots
\sigma_{\iota_n}(a_n))=0$ in all cases, as desired.
\end{proof}

It remains an open problem to determine whether the C*-algebra
$\Cstred (E,C)$ considered in the above result is simple.

\section{Some remarks on K-theory}
\label{sect:k-theory}

We recall from \cite{AG2} the following conjecture:

\begin{conjecture}\cite[7.6]{AG2}
\label{op-vmonoid} Let $(E,C)$ be a finitely separated graph. Let
$M(E,C)$ be the abelian monoid with generators $\{a_v\mid v\in
E^0\}$ and relations given by $a_v=\sum _{e\in X} a_{r(e)}$ for all
$v\in E^0$ and all $X\in C_v$. Then the natural map $M(E,C)\to
\mon{C^*(E,C)}$ is an isomorphism.
\end{conjecture}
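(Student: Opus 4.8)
The plan is to factor the map $M(E,C)\to\mon{C^*(E,C)}$ through the Leavitt path $*$-algebra $L:=L_{\C}(E,C)$ of the separated graph. By \cite{AG} the canonical map from the vertex monoid induces an isomorphism $M(E,C)\cong\mon{L}$, and there is a natural $*$-homomorphism $L\to C^*(E,C)$ with dense range, which is moreover injective by the uniqueness results of \cite{AG2}; since the map in Conjecture \ref{op-vmonoid} is precisely the composite $M(E,C)\cong\mon{L}\to\mon{C^*(E,C)}$, the problem reduces to the purely C*-algebraic assertion that the inclusion $L\hookrightarrow C^*(E,C)$ induces a bijection on $\mon{\,\cdot\,}$.

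For surjectivity I would aim to show that every projection in $M_k(C^*(E,C))$ is Murray--von Neumann equivalent to one lying in $M_k(L)$. The usual perturbation argument produces, for a given projection $p$, a self-adjoint $a\in M_k(L)$ with $\|p-a\|$ small and hence a spectral projection $\chi_{(1/2,\infty)}(a)$ equivalent to $p$, so the point is to ensure that this spectral projection again lies in $M_k(L)$. Dually, for injectivity one perturbs a partial isometry $w\in M_k(C^*(E,C))$ implementing an equivalence of projections $p,q\in M_k(L)$ to some $v\in M_k(L)$ with $\|w-v\|$ small, so that $pv^*vp$ becomes invertible in the corner $pM_k(L)p$, and then extracts an equivalence already inside $M_k(L)$. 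A more structural variant would be to realize $C^*(E,C)$ as an inductive limit $\varinjlim C^*(F_n)$ of ordinary graph C*-algebras paralleling the presentation $M(E,C)\cong\varinjlim M(F_n)$ of the monoid by graph monoids, and then invoke the identification $\mon{C^*(F)}\cong M(F)$ for ordinary graphs (\cite{AMP}, \cite{Raeburn}) together with continuity of $\mon{\,\cdot\,}$ along inductive limits.

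In all of these formulations the obstruction is the same, and I expect it to be the genuinely hard point: one needs $L_{\C}(E,C)$ to be a ``large enough'' dense subalgebra of $C^*(E,C)$ --- concretely, closed under the relevant continuous functional calculus, equivalently holomorphically closed, equivalently that the connecting maps of some multiresolution system $C^*(F_0)\to C^*(F_1)\to\cdots$ be injective with colimit $C^*(E,C)$. This is open in general, which is exactly why the statement remains a conjecture. It should, however, be substantially more accessible for the one-relator separated graphs treated here, where $C^*(E,C)$ and $\Cstred(E,C)$ have transparent amalgamated free product models, and where, for instance, the residually finite-dimensional C*-algebra of Example \ref{examplM(E,C)st} provides a concrete test case.

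It is worth recording that Conjecture \ref{op-vmonoid} is specific to the \emph{full} graph C*-algebra: the corresponding statement for $\Cstred(E,C)$ is false. Indeed, by Corollary \ref{cor:Umn} the algebra $\Cstred(E(m,n),C(m,n))$ is purely infinite simple whenever $1<m<n$, so $\mon{\Cstred(E(m,n),C(m,n))}\setminus\{0\}$ is a group, whereas $M(E(2,3),C(2,3))$ is the three-element monoid $\{0,a,2a\}$ with $a+a=2a$ and $ka=2a$ for $k\ge 2$, which is not a group away from $0$. Hence any proof of the conjecture must genuinely exploit the universal, rather than the reduced, relations of the separated graph.
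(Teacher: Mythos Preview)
The statement is a \emph{conjecture}, not a theorem: the paper does not prove it, and explicitly presents it as open (recalled from \cite{AG2}). There is therefore no proof in the paper to compare your proposal against. What the paper does do is exactly the reduction you make in your first paragraph: since $M(E,C)\cong\mon{L(E,C)}$ by \cite[Theorem 4.3]{AG}, the conjecture is equivalent to asking whether the inclusion $L(E,C)\hookrightarrow C^*(E,C)$ induces an isomorphism on $\mon{\,\cdot\,}$, and this is only known in the non-separated case \cite{AMP}. The paper then formulates the weaker Conjectures \ref{weaker-conj2} and \ref{weaker-conj1} and checks them in a handful of examples.

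Your proposal is not a proof either, as you yourself say. Both of your suggested routes have the same genuine gap, which you identify but perhaps understate. In the perturbation approach, the spectral projection $\chi_{(1/2,\infty)}(a)$ of a self-adjoint $a\in M_k(L)$ has no reason to lie in $M_k(L)$: the Leavitt path algebra of a separated graph is a purely algebraic object with no functional calculus, and there is no holomorphic-closure result available here (unlike, say, for smooth subalgebras). In the inductive-limit approach, the natural diagram expressing $C^*(E,C)$ in terms of ordinary graph C*-algebras is indexed by a \emph{non-directed} poset (one must choose, at each vertex, which set of $C_v$ to use), so it is a pushout/colimit rather than a direct limit, and continuity of $\mon{\,\cdot\,}$ fails for such colimits in general. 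Producing a directed system of ordinary graph C*-algebras with injective connecting maps and colimit $C^*(E,C)$ would already be a substantial advance.

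Your final paragraph, on the failure of the analogous statement for $\Cstred(E,C)$, is correct in spirit and matches the paper's Example \ref{examplM(E,C)st}; note, though, that the paper's example uses the presentation $\langle a,b\mid 3a+2b=2a+4b\rangle$ precisely because there $M(E,C)$ is stably finite while $\Cstred(E,C)$ is purely infinite simple, so one sees the failure of injectivity without having to compute $K_0$.
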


Let $L(E,C)$ be the dense $*$-subalgebra of $C^*(E,C)$ generated by
the canonical generators of $C^*(E,C)$.  It was shown in
\cite[Theorem 4.3]{AG} that there is a natural isomorphism
$M(E,C)\to \mon{L(E,C)}$, sending $a_v$ to $[v]\in \mon{L(E,C)}$,
where $\mon{L(E,C)}$ is the abelian monoid of isomorphism classes of
finitely generated projective right modules over $L(E,C)$. So the
above conjecture is equivalent to the question of whether the
natural induced map $\mon{L(E,C)}\to \mon{C^*(E,C)}$ is an
isomorphism. The answer is positive in the non-separated case
\cite[Theorem 7.1]{AMP}.

We now make two weaker conjectures, which can be checked in various
situations of interest.

\begin{conjecture}
\label{weaker-conj2} Let $(E,C)$ be a finitely separated graph. Then
the natural map
$$M(E,C)  \to \mon{C^*(E,C)}$$
is injective.
\end{conjecture}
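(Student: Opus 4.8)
The plan is to first reduce Conjecture \ref{weaker-conj2} to a statement about the dense $*$-subalgebra $L(E,C)$. By \cite[Theorem 4.3]{AG} the natural map $M(E,C)\to\mon{L(E,C)}$ is an isomorphism, and the map in the conjecture factors through it, as $M(E,C)\cong\mon{L(E,C)}\to\mon{C^*(E,C)}$, where the last arrow is induced by the inclusion $L(E,C)\hookrightarrow C^*(E,C)$. So it suffices to prove that two idempotents of $M_\infty(L(E,C))$ that become Murray--von Neumann equivalent in $M_\infty(C^*(E,C))$ already represent the same class in $M(E,C)$; equivalently, that passing to the $C^*$-completion forces no new identifications among the classes $[a_v]$ and their iterated block decompositions.

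A natural first attempt is to imitate the proof of the non-separated case \cite[Theorem 7.1]{AMP}. Both $L(E,C)$ and $C^*(E,C)$ carry a grading by the free group $\mathbb F$ on the edge set $E^1$ (with each vertex in degree $1$ and each $e$, $e^*$ in degrees $g_e$, $g_e^{-1}$), for which all the defining relations (V), (E), (SCK1), (SCK2) are homogeneous. One would then like to compute $\mon{C^*(E,C)}$ from the degree-$0$ subalgebra together with the canonical shift endomorphisms, exactly as the monoid of an ordinary graph $C^*$-algebra is recovered from its AF core via the Pimsner exact sequence. The obstruction here is structural: in the separated case the degree-$0$ part need not be AF --- already $ee^*$ for $e\in X$ and $ff^*$ for $f\in Y$ fail to commute when $X\neq Y$ --- and this is precisely the phenomenon that makes $\mon{C^*(E,C)}$ potentially wild. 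So a pure gauge-action/AF-core argument cannot go through; circumventing this is the heart of the matter.

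The more promising route, and the one I would pursue --- first in the row-finite, and then ideally in the finite, case --- is to produce a family of representations of $C^*(E,C)$ rich enough to detect $M(E,C)$ on projections. Concretely, for a suitable class of target $C^*$-algebras $R$ one tries to show, using the universal property of $C^*(E,C)$, that every monoid homomorphism $f\colon M(E,C)\to\mon{R}$ is induced by an honest $*$-homomorphism $C^*(E,C)\to R$: this amounts to exhibiting inside $R$, for each vertex $v$ and each block $X\in C_v$, an orthogonal family of projections realising a decomposition of a projection in the class $f(a_v)$ into subprojections of classes $f(a_{r(e)})$, $e\in X$, together with partial isometries implementing the equivalences --- then (SCK1) and (SCK2) hold by construction. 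If enough such pairs $(R,f)$ can be found to separate the points of $M(E,C)$, injectivity follows. Two difficulties stand in the way, and I expect the second to be the real obstacle. First, $M(E,C)$ is in general neither cancellative nor a refinement monoid --- this genericity is the whole point of \cite{AG} and \cite{AG2} --- so traces and $K$-theory alone do not separate its points, and one is forced to work with non-cancellative, non-AF targets $R$. Second, and crucially, realising a prescribed $f$ inside $R$ demands splitting the \emph{same} projection in the several different ways dictated by the distinct blocks $X\in C_v$ \emph{simultaneously and compatibly} --- precisely the constraint the separated-graph relations encode, and one which may have no solution in any tractable $R$; equivalently, one is forced to control $\mon{C^*(E,C)}$ directly, i.e.\ the monoid of projective modules over the universal $C^*$-algebra (or over the amalgamated free product describing it). I would therefore begin with the one-relator family studied in this paper, where both $M(E,C)$ and the amalgamated free product are completely explicit, prove injectivity there, and try to bootstrap from that.
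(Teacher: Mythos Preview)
The statement you are attempting to prove is labelled \textbf{Conjecture} in the paper, and the paper does \emph{not} prove it. It is posed as an open problem weaker than Conjecture~\ref{op-vmonoid}, and the surrounding text only verifies it in a handful of explicit cases (the free-group case $C^*(\mathbb F_n)$, the algebras $C^*(E(n,n),C(n,n))$, certain $M_k(\C)*M_l(\C)$, and $\mathcal O_n*\mathcal O_m$), each time by quoting an external computation of $K_0$ or $\mon{\cdot}$ for that specific algebra. There is therefore no ``paper's own proof'' to compare your attempt to.

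As for the proposal itself: it is not a proof either, and you are candid about this. You outline two strategies, correctly identify why the first (a gauge-action/AF-core argument \`a la \cite{AMP}) breaks down in the separated setting, and then sketch a second strategy (separate points of $M(E,C)$ by $*$-homomorphisms into auxiliary targets $R$) while acknowledging the central obstruction --- that realising an arbitrary monoid map $M(E,C)\to\mon{R}$ requires splitting the same projection compatibly along all blocks $X\in C_v$, which is exactly the universal problem $C^*(E,C)$ encodes. That is a fair diagnosis, but it leaves the conjecture untouched: you have restated the difficulty rather than overcome it. Your closing suggestion to start with the one-relator family is reasonable and in the spirit of the paper, but note that even there the paper does not claim a proof of Conjecture~\ref{weaker-conj2} in general; it only checks a few instances by ad hoc means.

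In short: there is no gap to name because there is no argument --- what you have written is a research plan, not a proof, for a statement the paper itself leaves open.
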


\begin{conjecture}
\label{weaker-conj1}
 Let $(E,C)$ be a finitely separated graph. Let
$(G(E,C), G(E,C)^+)$ be the Grothendieck group of $M(E,C)$, with the
canonical pre-ordered structure given by taking $G(E,C)^+=\iota
(M(E,C))$, where $\iota \colon M(E,C)\to G(E,C)$ is the canonical
map (note that $\iota $ does not need to be injective). Then we have
a natural homomorphism
$$(G(E,C), G(E,C)^+)\to (K_0(C^*(E,C)), K_0(C^*(E,C))^+) $$
of partially pre-ordered abelian groups.
 We conjecture that this map is an isomorphism. By \cite[Theorem 5.2]{AG2} the map
$G(E,C)\to K_0(C^*(E,C))$ is an isomorphism, so the conjecture is
 that the order structure in $K_0(C^*(E,C))$ is the one given by $G(E,C)^+$.
\end{conjecture}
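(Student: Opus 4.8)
The statement has two layers: the (non-conjectural) existence of a natural order-preserving homomorphism $G(E,C)\to K_0(C^*(E,C))$, and the (conjectural) assertion that it is an isomorphism of pre-ordered groups. The plan is to dispose of the first layer essentially formally, and then organize an attack on the second by reducing it to a question about the monoid of projections.

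For the existence I would start from the monoid homomorphism $\nu\colon M(E,C)\to \mon{C^*(E,C)}$ recalled in the introduction (sending $a_v\mapsto [v]$), compose it with the canonical monoid map $\mon{C^*(E,C)}\to K_0(C^*(E,C))^+$, $[p]\mapsto [p]_0$, and invoke the universal property of the Grothendieck group to factor the resulting homomorphism $M(E,C)\to K_0(C^*(E,C))$ uniquely through $\iota\colon M(E,C)\to G(E,C)$, producing a group homomorphism $\theta\colon G(E,C)\to K_0(C^*(E,C))$. It is order-preserving because $\theta(G(E,C)^+)=\theta(\iota(M(E,C)))$ is by construction contained in the image of $\mon{C^*(E,C)}$ in $K_0$, which is $K_0(C^*(E,C))^+$; naturality in $(E,C)$ follows from naturality of each of the three maps used. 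By \cite[Theorem 5.2]{AG2}, $\theta$ is already an isomorphism of groups, so the whole content of the conjecture collapses to the single containment of positive cones $K_0(C^*(E,C))^+\subseteq \theta(G(E,C)^+)$.

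For that containment I would reduce to the monoid level. Since $K_0(C^*(E,C))^+$ is generated as a monoid by the classes $[p]_0$ of projections $p$ over $C^*(E,C)$, the missing inclusion is equivalent to saying that every class in $\mon{C^*(E,C)}$, after the group-completion map, already lies in $\iota(\nu(M(E,C)))$; this is implied by Conjecture \ref{op-vmonoid}, and already by surjectivity of $\nu$. So the realistic approach is to prove such a ``generation modulo stable equivalence'' statement case by case, in the families where $\mon{C^*(E,C)}$ is under control: the non-separated case via \cite[Theorem 7.1]{AMP}, and the one-relator separated graphs of this paper, where $C^*(E,C)$ is Morita equivalent to an amalgamated free product of finite-dimensional algebras over $\C^{n+1}$, so that $K_0$ and its positive cone can be computed by a pushout/Mayer--Vietoris argument and one checks by hand that the vertex classes suffice.

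The step I expect to be the main obstacle is precisely this inclusion $K_0(C^*(E,C))^+\subseteq \theta(G(E,C)^+)$: the underlying group of $K_0$ is well understood through \cite[Theorem 5.2]{AG2}, but its positive cone is governed by $\mon{C^*(E,C)}$, and for separated graphs this monoid is genuinely hard to compute because the range projections of distinct edges need not commute --- the very phenomenon that keeps Conjecture \ref{op-vmonoid} open in general. A proof valid for all finitely separated graphs would therefore need genuinely new information about $\mon{C^*(E,C)}$ (or a $K$-theoretic argument bypassing it altogether); short of that, I would aim only to verify the conjecture in the explicitly computable families treated here and in \cite{AG,AG2,AMP}.
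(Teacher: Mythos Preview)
This statement is a \emph{conjecture}, not a theorem: the paper does not prove it, and explicitly presents it as open (``We conjecture that this map is an isomorphism''). So there is no proof in the paper to compare against.

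Your analysis is nonetheless accurate and aligns with the paper's own discussion. You correctly separate the formal part (existence of the order-preserving map, which follows from the universal property of the Grothendieck group and \cite[Theorem 5.2]{AG2}) from the conjectural part (the inclusion $K_0(C^*(E,C))^+\subseteq \theta(G(E,C)^+)$), and you correctly observe that the latter would follow from surjectivity of $\nu\colon M(E,C)\to \mon{C^*(E,C)}$, hence from Conjecture~\ref{op-vmonoid}. The paper makes exactly the same point implicitly, and then --- just as you suggest --- verifies the conjecture only in specific computable families (trivially separated graphs, $C^*(\mathbb F_n)$, $C^*(E(n,n),C(n,n))$, $M_k(\C)*M_l(\C)$ for suitable $k,l$, and $\mathcal O_n*\mathcal O_m$). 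Your honest acknowledgment that a general proof would require new information about $\mon{C^*(E,C)}$ is precisely the state of affairs the paper records.
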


Let us show that Conjecture \ref{op-vmonoid} implies a positive
answer to a question of R\o rdam and Villadsen \cite[Question
2.1(a)]{RV}. Recall that an ordered group is a pair $(G,G^+)$ where
$G$ is an abelian group, $G^+\subseteq G$, and
$$ G^+ + G^+\subseteq G^+, \qquad G^+-G^+=G, \qquad G^+\cap
-G^+=\{0\} .$$

\begin{remark}
\label{rem:questionRV} If Conjecture \ref{op-vmonoid} holds, then
for every ordered abelian group $(G,G^+)$ there is a stably finite
C*-algebra $\mathcal A$ such that $(K_0(\mathcal A), K_0(\mathcal
A)^+)\cong (G,G^+)$.
\end{remark}

Indeed, since $G^+$ is a conical abelian monoid, we may take a
presentation $\langle \mathcal X\mid \mathcal R \rangle$ of $G^+$ as
indicated in the proof of \cite[Proposition 4.4]{AG}. Then, with
$(E,C)$ being the separated graph associated to this presentation,
we have $M(E,C)\cong G^+$ (\cite[4.4]{AG}). Since the Grothendieck
group of $G^+$ is $G$ we obtain from Conjecture \ref{op-vmonoid}
that $(G,G^+)\cong (K_0(\mathcal A), K_0(\mathcal A)^+)$, where
$\mathcal A= C^*(E,C)$. It remains to verify that $\mathcal A$ is
stably finite. But since $\mon{\mathcal A}\cong G^+$ embeds into a
group, it is clear that all projections in $M_{\infty}(\mathcal A)$
are finite, as desired.

\medskip

However the validity of Conjecture \ref{op-vmonoid} seems to be
known in very few cases in the non-separated case.

\medskip

The validity of the weaker conjectures \ref{weaker-conj2},
\ref{weaker-conj1} can be checked in several cases. For instance, it
holds for the separated graph $(E,C)$ with just one vertex and the
sets in the partition $C$ reduced to singletons, because then
$C^*(E,C)$ is just the full group C*-algebra $C^*(\mathbb F _n)$,
where $n$ is the number of edges, and for the C*-algebras
$C^*(E(n,n), C(n,n))$ (see the proof of \cite[Theorem 6.3]{AG2}).
For the C*-algebras $\mathcal A =M_k(\C)*M_l(\C)$, with
$\text{gcd}(k,l)=1$ and at least one of $k$ or $l$ prime, it follows
from \cite[Theorem 3.6]{RV} that $(K_0(\mathcal A), K_0(\mathcal
A)^+)\cong (\Z, \langle k,l\rangle )$, with the integers $k,l$
represented in $K_0(\mathcal A)$ by the classes of the minimal
projections in $M_l(\C)$ and $M_k(\C)$ respectively. The algebra
$\mathcal A$ is Morita-equivalent to the C*-algebra of the separated
graph associated to the presentation $\langle a,b\mid ka=lb\rangle$,
and so we obtain for these particular values of $k,l$ that both
Conjectures \ref{weaker-conj2} and \ref{weaker-conj1} are true.

\medskip

As another interesting example, we consider the C*-algebra $\mathcal
A=\mathcal O_n *\mathcal O _m= C^*(E,C)$ treated in Proposition
\ref{prop:one-vertex}. In this case $K_0(\mathcal A)=K_0(\mathcal
A)^+\cong \Z_{d}$, where $d=\text{gcd}(n-1,m-1)$ (by \cite[Theorem
5.2]{AG2}), and
$$M(E,C)=\langle a\mid a=na=ma\rangle = \langle a \mid a=
(d+1)a\rangle $$ so both Conjectures  \ref{weaker-conj2} and
\ref{weaker-conj1} hold. Note that $\Cstred(E,C)$ is purely infinite
simple by \ref{prop:one-vertex}, and that $K_0(\mathcal A)\cong
K_0(\Cstred (E,C))$ by Germain's Theorem (\cite{Germain}), so we
obtain the very precise information $\mon{\Cstred (E,C)}=\langle a
\mid a=(d+1)a\rangle $ for the reduced graph C*-algebra.

\medskip

In contrast, the natural map $M(E,C)\to \mon{\Cstred (E,C)}$,
defined by composing the natural map $M(E,C)\to \mon{C^*(E,C)}$ with
the homomorphism induced by the natural surjection $C^*(E,C)\to
\Cstred (E,C)$, fails to be injective or surjective in many cases.
Using our main result, we will show that it may even happen that
$M(E,C)$ is stably finite (i.e. $x+y=x\implies y=0 \,\,  \forall
x,y\in M(E,C)$) but $\Cstred (E,C)$ is purely infinite simple. To
see this, we need a monoid-theoretic lemma.

\begin{lemma}
\label{lem:finiteM} Let $F$ be the free abelian monoid on free
generators $a_1,a_2,\dots ,a_n$. Let
$$x=\sum _{i=1}^n
r_ia_i,\qquad y =\sum_{i=1}^n s_i a_i $$ be nonzero elements in $F$.
Let $M$ be the conical abelian monoid ${F/\sim}$ where $\sim$ is the
congruence on $F$ generated by $(x,y)$. Then $M$ contains infinite
elements if and only if either $x<y$ or $y<x$ in the usual order of
$F$.
\end{lemma}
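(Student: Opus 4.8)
The statement is a purely monoid-theoretic fact: for $M = F/\sim$ with $\sim$ generated by a single pair $(x,y)$ of nonzero elements of the free abelian monoid $F = \N^n$, the monoid $M$ has an infinite element (meaning some $z$ with $z = z + t$ for some $t \neq 0$ in $M$) if and only if $x$ and $y$ are comparable in the componentwise order on $F$. One direction is easy: if, say, $x < y$, write $y = x + t$ with $0 \neq t \in F$; then in $M$ we have $[x] = [y] = [x] + [t]$, so $[x]$ is an infinite element (one must also check $[t] \neq 0$ in $M$, which holds because $M$ is conical and $[t] = 0$ would force, via the congruence, a relation that cannot reduce a nonzero element of $F$ to $0$ — more carefully, the only way $[t]=0$ is if $t\sim 0$, and the generating relation $x\sim y$ with $x,y$ nonzero never identifies any nonzero element with $0$). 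So the content is the converse: if $x$ and $y$ are incomparable, then $M$ has no infinite elements, i.e. $M$ is stably finite.

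**Main step (the converse).** I would prove the contrapositive's substance directly: assuming $x \not< y$ and $y \not< x$, construct a monoid homomorphism $\varphi \colon M \to G^+$ into the positive cone of an ordered abelian group $G$ such that $\varphi$ detects infiniteness, i.e. such that no nonzero element of $G^+$ is infinite (automatic, since $G$ is a group) and such that $\varphi$ is "order-reflecting enough" to conclude $z + t = z$ in $M$ forces $\varphi(t) = 0$ and then $t = 0$. The natural candidate: let $G = \Z^n / \Z(x - y)$ with $x - y \in \Z^n$, and let $G^+$ be the image of $F = \N^n$. Since $x-y$ has both a strictly positive and a strictly negative coordinate (this is exactly incomparability of $x$ and $y$!), the subgroup $\Z(x-y)$ meets $\N^n$ only in $0$, which is the key point that makes $(G, G^+)$ an \emph{ordered} group (i.e. $G^+ \cap -G^+ = \{0\}$) rather than merely pre-ordered. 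Then the quotient map $F \to G^+$ sends $x, y$ to the same element, hence factors through $M \twoheadrightarrow G^+$; and since $G^+$ embeds in the group $G$, it has no infinite elements. Finally, if $[z] + [t] = [z]$ in $M$ then applying $\varphi$ gives an equation in $G^+ \subseteq G$ forcing $\varphi(t) = 0$; but $\varphi$ restricted to $F$ is the composite $\N^n \to \Z^n \to \Z^n/\Z(x-y)$, and $\varphi(t) = 0$ means $t \in \Z(x-y) \cap \N^n = \{0\}$, so $t = 0$ in $F$, hence $[t] = 0$ in $M$.

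**Where the work is.** The one genuinely nontrivial point — everything else is bookkeeping — is the claim $\Z(x-y) \cap \N^n = \{0\}$ under the incomparability hypothesis. If $k(x-y) \in \N^n$ for some integer $k \neq 0$, then every coordinate of $k(x-y)$ is $\ge 0$; but incomparability of $x$ and $y$ means there is a coordinate $i$ with $x_i > y_i$ (so $(x-y)_i > 0$) and a coordinate $j$ with $x_j < y_j$ (so $(x-y)_j < 0$), and one of $k(x-y)_i$, $k(x-y)_j$ is then strictly negative, a contradiction. So this step is in fact short; the subtlety is conceptual rather than computational — recognizing that incomparability is precisely the condition needed to upgrade a pre-ordered quotient to an ordered one, and that stable finiteness of a conical monoid is witnessed exactly by an order-embedding into such a group. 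I would also remark that this is consistent with Remark \ref{rem:questionRV}: when $x, y$ are incomparable the resulting $(G, G^+)$ is a genuine ordered group, matching the construction there, whereas comparability produces an infinite element and destroys stable finiteness.
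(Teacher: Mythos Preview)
Your proof is correct and takes essentially the same approach as the paper: both arguments reduce to the observation that if $a\sim a+c$ in $F$ then $c\in \Z(x-y)$ in $\Z^n$, together with the fact that incomparability of $x$ and $y$ forces $\Z(x-y)\cap \N^n=\{0\}$. The paper obtains the first fact directly from the chain description of the congruence (each step alters an element by $\pm(y-x)$), whereas you package the same content via the quotient group $G=\Z^n/\Z(x-y)$ and its induced map from $M$; your extra care in verifying $[t]\ne 0$ for the easy direction is a point the paper passes over with ``clearly''.
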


\begin{proof}
If $x<y$ or $y<x$ then clearly $[x]$ and $[y]$ are infinite elements
in $M$.

Observe that $\sim $ agrees with the congruence on $F$ defined as
follows: For $a,b\in F$, set $a\backsimeq b$ in  case there exists a
sequence $z_0,z_1,\dots ,z_r$, $r\ge 0$,  of elements of $F$ such
that $a=z_0$, $b=z_r$ and for each $i=0,\dots ,r-1$, either
$z_i=y_i+x$ and $z_{i+1}= y_i+y$ for some $y_i\in F$, or $z_i=y_i+y$
and $z_{i+1}= y_i+x$ for some $y_i\in F$. Hence, if $a\sim b$ then
there is $t\in \Z$ such that $b=a+t(y-x)$ in the free abelian group
$\Z ^n$. Thus if $a\sim a+c$ with $c\in F\setminus \{0\}$, then
there is $t\in \Z$ such that $c=t(y-x)$, and this implies either $x
< y$ or $y<x$.
\end{proof}

It is interesting to observe that the conditions giving that
$M(E,C)$ is stably finite are the same as the ones giving that
$C^*(E,C)$ is stably finite, as the following proposition and the
above lemma show. This provides further evidence to the validity of
Conjecture \ref{op-vmonoid}. Recall that a C*-algebra $A$ is termed
{\it residually finite dimensional} if it admits a separating family
of finite-dimensional $*$-representations.

\begin{proposition}
\label{prop:RFD} Let $(E,C)$ be the separated graph associated to
the presentation $\langle a_1,\dots ,a_n\mid \sum _{i=1}^n
r_ia_i=\sum _{i=1}^n s_ia_i\rangle $. Consider the nonzero vectors
${\bf r}= (r_1,\dots ,r_n)$ and ${\bf s}=(s_1,\dots ,s_n)$ in
$\Z^n$. Then the following conditions are equivalent:
\begin{enumerate}
\item[(i)] $C^*(E,C)$ is residually finite dimensional.
\item[(ii)] $C^*(E,C)$ admits a faithful tracial state.
\item[(iii)] $C^*(E,C)$ is stably finite.
\item[(iv)] $C^*(E,C)$ is finite.
\item[(v)] ${\bf r}\nless {\bf s}$ and ${\bf s}\nless {\bf r}$ in the usual
order of $\Z^n$.
\end{enumerate}
\end{proposition}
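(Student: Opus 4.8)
I would establish the cycle $(i)\Rightarrow(ii)\Rightarrow(iii)\Rightarrow(iv)\Rightarrow(v)\Rightarrow(i)$, in which only the last implication is substantial. For $(i)\Rightarrow(ii)$: since $C^*(E,C)$ is separable, a separating family of finite-dimensional representations may be chosen countable, say $\{\pi_k\}_{k\ge 1}$ with $\pi_k$ acting on $\C^{n_k}$, and then $\tau=\sum_{k\ge 1}2^{-k}\,(\mathrm{tr}_{n_k}\circ\pi_k)$, with $\mathrm{tr}_{n_k}$ the normalized trace, is a faithful tracial state. For $(ii)\Rightarrow(iii)$, a faithful tracial state $\tau$ on $\calA:=C^*(E,C)$ induces the faithful tracial state $\tau\otimes\mathrm{tr}_m$ on $M_m(\calA)$, which forces $M_m(\calA)$ to be a finite C*-algebra for every $m$; and $(iii)\Rightarrow(iv)$ is immediate.

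I would prove $(iv)\Rightarrow(v)$ contrapositively, assuming, say, ${\bf r}<{\bf s}$ (the case ${\bf s}<{\bf r}$ being symmetric). Passing to $\mon{C^*(E,C)}$ and using the relation (SCK2) for the two blocks $X,Y\in C_v$: the range projections $\alpha^{(i)}_j(\alpha^{(i)}_j)^*$ are pairwise orthogonal and each is equivalent to $w_i=r(\alpha^{(i)}_j)$ via the partial isometry $\alpha^{(i)}_j$ (and similarly for the $\beta^{(i)}_j$), so $[v]=\sum_{i=1}^n s_i[w_i]$ and $[v]=\sum_{i=1}^n r_i[w_i]$. With ${\bf d}:={\bf s}-{\bf r}\ge 0$, ${\bf d}\ne 0$, this gives $[v]+\sum_i d_i[w_i]=[v]$, and the extra term is nonzero: indeed $[w_{i_0}]\ne 0$ whenever $d_{i_0}>0$, since $w_{i_0}$ is already nonzero in $\Cstred(E,C)$, the amalgam $\C^{n+1}$ sitting faithfully there because the canonical conditional expectation $\Phi$ is faithful. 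Hence $[1]=[v]+\sum_i[w_i]=[1]+\sum_i d_i[w_i]$ with a nonzero summand, so $1_{C^*(E,C)}$ is an infinite projection and $C^*(E,C)$ is not finite.

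The heart of the argument is $(v)\Rightarrow(i)$. First I would reformulate $(v)$: a faithful tracial state on $A=\prod_{i=1}^n M_{s_i+1}(\C)$ is determined by the strictly positive weights ${\bf t}=(t_1,\dots,t_n)$ it assigns to the minimal projections of the blocks (with $\sum_i(s_i+1)t_i=1$), and likewise for $B=\prod_{i=1}^n M_{r_i+1}(\C)$; two such states restrict to a common faithful state on $\C^{n+1}$ exactly when the weight vectors agree and $\langle{\bf s}-{\bf r},{\bf t}\rangle=0$ for some ${\bf t}\in\mathbb R^n_{>0}$, which is possible precisely when ${\bf s}-{\bf r}$ is neither a nonzero nonnegative nor a nonzero nonpositive vector, i.e.\ precisely when $(v)$ holds. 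Granting this, I would prove that the full amalgamated free product $C^*(E,C)=(A,\Phi_A)*_{\C^{n+1}}(B,\Phi_B)$ is residually finite-dimensional by constructing an explicit separating family of finite-dimensional representations: for each ${\bf m}\in\mathbb Z^n_{>0}$ with $\langle{\bf s}-{\bf r},{\bf m}\rangle=0$ (such ${\bf m}$ exist, again by $(v)$) there are representations on a Hilbert space $H$ of dimension $\sum_i(s_i+1)m_i=\sum_i(r_i+1)m_i$ in which $w_i$ has rank $m_i$, the block representations of $A$ and of $B$ agreeing on $\C^{n+1}$ but glued over the common subspace $vH$ by an arbitrary unitary; letting ${\bf m}$ and the gluing unitary vary produces the desired family.

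I expect the decisive obstacle to be the verification that this family separates the points of $C^*(E,C)$, since here one must control the \emph{full} rather than the reduced amalgamated free product. I would attack it by checking separation on the dense $*$-subalgebra generated by the canonical generators, via a perturbation argument on alternating reduced words $a_1b_1a_2b_2\cdots$ with $a_j\in A^{{\rm o}}$, $b_j\in B^{{\rm o}}$, exploiting the full freedom in the gluing unitaries; the tracial compatibility guaranteed by $(v)$ is exactly what makes sufficiently many finite-dimensional representations available. Alternatively, one could invoke general results on residual finite-dimensionality of amalgamated free products over a finite-dimensional subalgebra with a common faithful tracial marginal.
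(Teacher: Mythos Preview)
Your proposal is correct and follows essentially the same logical cycle as the paper. The easy implications $(i)\Rightarrow(ii)\Rightarrow(iii)\Rightarrow(iv)$ and the contrapositive $(iv)\Rightarrow(v)$ are handled identically (the paper simply notes that $v\sim\bigoplus_i r_iw_i\sim\bigoplus_i s_iw_i$ is infinite). Your reformulation of $(v)$ as the existence of a strictly positive vector ${\bf t}$ orthogonal to ${\bf s}-{\bf r}$ is exactly the condition the paper verifies, though you state it more cleanly than the paper's explicit construction of the weights $\gamma_i,\delta_i$.

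The one point worth flagging is that your \emph{primary} plan for $(v)\Rightarrow(i)$---building representations on $H$ of dimension $\sum_i(s_i+1)m_i$ with arbitrary gluing unitaries and then proving they separate points by a perturbation argument on reduced words---is considerably harder to carry out than you suggest; you correctly identify the separation step as the obstacle, and in fact completing it rigorously amounts to reproving a special case of the Armstrong--Dykema--Exel--Li theorem. The paper does not attempt this: it simply invokes that theorem (your ``alternatively'' clause), which says that a full amalgamated free product $A*_C B$ of finite-dimensional C*-algebras is residually finite-dimensional if and only if $A$ and $B$ carry faithful tracial states agreeing on $C$. So your fallback route is the paper's actual route, and the substance of $(v)\Rightarrow(i)$ reduces to the linear-algebraic existence of compatible traces, which you have already established.
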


\begin{proof}
The implications $(i)\implies (ii) \implies (iii) \implies (iv)$ are
general, well-known facts.

$(iv)\implies (v)$.  If ${\bf r} < {\bf s}$ or ${\bf s} <{\bf r}$ in
the usual order of $\Z^n$ then one can easily see that the
projection
$$v\sim \bigoplus _{i=1}^n r_i\cdot w_i \sim \bigoplus
_{i=1}^n s_i \cdot w_i $$ is infinite in $C^*(E,C)$.

$(v)\implies (i)$.  Assume that ${\bf r}\nless {\bf s}$ and ${\bf
s}\nless {\bf r}$ in the usual order of $\Z^n$. Then we shall show
that $C^*(E,C)$ is residually finite dimensional by applying
\cite[Theorem 4.2]{ADEL}, which asserts that a full amalgamated free
product $A*_CB$ of finite dimensional C*-algebras is residually
finite dimensional if and only if there are faithful tracial states
$\tau_A$ on $A$ and $\tau_B$ on $B$ whose restrictions to $C$ agree.

Since ${\bf r}\nless {\bf s}$ and ${\bf s}\nless {\bf r}$, either
${\bf r}={\bf s}$ or there are indices $i,j$ such that $r_i<s_i$ and
$s_j<r_j$. In the former case, it follows from Proposition
\ref{prop:balancedcase} that there is a faithful state $\tau$ on
$C=\C^{n+1}$ such that $\tau_A=\tau \circ \Phi_A$ and $\tau_B=
\tau\circ \Phi _B$ are faithful tracial states on $A$ and $B$
respectively. So \cite[Theorem 4.2]{ADEL} gives the result. In the
latter case, without loss of generality, we shall assume that
$r_1<s_1$ and $s_2<r_2$.

 We will use the concrete representations of $A=C^*(E_X)$ and
$B=C^*(E_Y)$ as finite products of matrix algebras introduced in
Section \ref{sect:pur-infinite}, so that $A= \prod _{i=1}^n
M_{s_i+1}(\C)$, $B=\prod_{i=1}^n M_{r_i+1}(\C)$ and $C=\C^{n+1}$,
and the embeddings $\iota _A\colon C\to A$ and $\iota _B\colon C\to
B$ are as specified in Section \ref{sect:pur-infinite}. We want to
define faithful tracial states $\tau _A= \sum _{i=1}^n \gamma _i
\text{Tr}_{s_i+1}$ on $A$ and $\tau _B=\sum _{i=1}^n \delta _i
\text{Tr}_{r_i+1}$ on $B$, with $\gamma_i >0$ and $\delta _i>0$ for
all $i$, and $\sum _{i=1}^n \gamma_i =\sum _{i=1}^n \delta _i =1$,
such that the restrictions of $\tau _A$ and $\tau _B$ to
$C=\C^{n+1}$ agree. This is equivalent to the identities:
\begin{equation}
\label{eq:gamm-deltas} \delta _i = \Big(\frac{r_i+1}{s_i+1}\Big)
\gamma_i,\qquad i=1,\dots ,n.
\end{equation}
For $i=2,\dots ,n$, put
$$\Delta _i = \Big(\frac{r_i+1}{s_i+1}\Big)-
\Big( \frac{r_1+1}{s_1+1}\Big)\, .$$
Set $\Gamma =
(r_2+1)(s_1+1)-(s_2+1)(r_1+1)$, and observe that $\Gamma >0$ by our
hypothesis that $r_1<s_1$ and $s_2<r_2$. Set
\begin{equation}
\label{eq:gamma'} \gamma _1'= \frac{(s_1+1)(r_2-s_2)}{\Gamma
},\qquad \gamma _2'= \frac{(s_2+1)(s_1-r_1)}{\Gamma } \, ,
\end{equation}
and observe that $\gamma _1'>0$, $\gamma _2' >0$ and $\gamma
_1'+\gamma _2'=1$. Now define $\gamma _1, \gamma _2$ by
\begin{equation}
\label{eq:gamma12} \gamma_1 = \gamma _1'+\sum _{i=3}^n \Big(
\frac{\Delta_i}{\Delta_2}-1\Big) \gamma_i,\qquad \gamma _2= \gamma
_2'-\sum _{i=3}^n \Big( \frac{\Delta_i}{\Delta_2}\Big) \gamma _i \,
,\end{equation} where $\gamma _3,\dots, \gamma _n$ are chosen to be
positive numbers small enough to make $\gamma _1$ and $\gamma _2$
both positive. With this choice of $\gamma_1,\dots ,\gamma _n$ one
has that $\gamma_i >0$ for all $i$ and that $\sum _{i=1}^n \gamma _i
=1$. Now, the equations (\ref{eq:gamm-deltas}) define positive
numbers $\delta _i$, and it is easily checked that $\sum _{i=1}^n
\delta_i =1$.

Hence, the formulas $\tau _A= \sum _{i=1}^n \gamma _i
\text{Tr}_{s_i+1}$ and $\tau _B=\sum _{i=1}^n \delta _i
\text{Tr}_{r_i+1}$ define faithful tracial states on $A$ and $B$
respectively, such that the restrictions of $\tau _A$ and $\tau _B$
to $C=\C^{n+1}$ agree. It follows from \cite[Theorem 4.2]{ADEL} that
$C^*(E,C)= A*_CB$ is residually finite dimensional.
\end{proof}

We observe that, in view of Lemma \ref{lem:full-algebras} the above
result incorporates \cite[Theorem 2.2]{BN}.

\begin{example}
\label{examplM(E,C)st} There exists separated graphs $(E,C)$ such
that $M(E,C)$ is a stably finite monoid and $C^*(E,C)$ is a stably
finite C*-algebra, but $\Cstred (E,C)$ is purely infinite simple,
and moreover the natural map $M(E,C)\to \mon{\Cstred (E,C)}$ is not
injective.
\end{example}

\begin{proof}
Take for instance the separated graph associated to the one-relator
monoid $\langle a,b \mid 3a+2b=2a+4b \rangle$. By Lemma
\ref{lem:finiteM}, $M(E,C)$ is a stably finite monoid and, by
Proposition \ref{prop:RFD}, $C^*(E,C)$ is a stably finite
C*-algebra. However, by Theorem \ref{thm:motivating}, we have that
$\Cstred (E,C)$ is purely infinite simple. In particular we obtain
that $\mon{\Cstred (E,C)}\setminus \{0\}=K_0(\Cstred (E,C))$ is
cancellative. Thus $a\ne 2b$ in $M(E,C)$ but $a=2b$ in $\mon{\Cstred
(E,C)}$. This shows the result.
\end{proof}

\section*{Acknowledgements} It is a pleasure to thank Ken Goodearl for his
contribution to this work. I am grateful to Ruy Exel and Terry
Loring for their useful comments.

\end{document}